\renewenvironment{proof}[1][Proof]{\textbf{#1.} }{\ \rule{0.5em}{0.5em}}
\DeclareMathOperator{\bd}{bd}
\DeclareMathOperator{\co}{co}
\renewenvironment{proof}[1][Proof]{\textbf{#1.} }
{\ \rule{0.5em}{0.5em}}
\newtheorem{theorem}{Theorem}
\newtheorem{prop}{Proposition}
\newtheorem{conjecture}{Conjecture}
\theoremstyle{definition}
\newtheorem{remark}{Remark}
\newtheorem{problem}{Problem}
\begin{document}

\title
[Some extremal problems for polygons in the Euclidean plane]
{Some extremal problems for polygons \\ in the Euclidean plane}
\author{Yu.G.~Nikonorov, O.Yu.~Nikonorova}

\address{Nikonorov Yurii Gennadievich \newline
Southern Mathematical Institute of \newline
the Vladikavkaz Scientific Center of \newline
the Russian Academy of Sciences, \newline
Vladikavkaz, Markus st., 22, \newline
362027, Russia}
\email{nikonorov2006@mail.ru}

\address{Nikonorova Olga Yuryevna \newline
Bauman Moscow State Technical University, \newline
Moscow, 2-nd Baumanskaya, 5, \newline
105005, Russia}
\email{olyanik2003@gmail.com}

\begin{abstract}
The paper is devoted to some extremal problems, related to convex polygons in the Euclidean plane and their perimeters.
We present a number of results that have simple formulations, but rather intricate proofs. Related and still unsolved problems are discussed too.

\vspace{2mm}
\noindent
2020 Mathematical Subject Classification:
52A10, 52A40, 52B60.

\vspace{2mm} \noindent Key words and phrases:  convex figure, convex polygon, perimeter.
\end{abstract}

\maketitle

\section{Introduction}\label{sect.0}

In this paper, we deal with polygons in the Euclidean plane.
We identify the Euclidean plane with $\mathbb{R}^2$ supplied with the standard Euclidean metric~$d$, where $d(x,y)=\sqrt{(x_1-y_1)^2+(x_2-y_2)^2}$.
For any subset $A\subset \mathbb{R}^2$, $\co (A)$ means the {\it convex hull} of $A$. For every points $B,C \in \mathbb{R}^2$, $[B,C]$
denotes the line segment between these points.
In what follows, $O$ means the origin in $\mathbb{R}^2$, the symbols $B(x,\rho)$ and $S(x,\rho)$ denote, respectively, a closed ball (or disk) and a circle in $\mathbb{R}^2$
with center $x \in  \mathbb{R}^2$ and radius $\rho \geq 0$.

{\it A convex {\rm(}planar{\rm)} figure} is any compact convex subset of $\mathbb{R}^2$.
We shall denote by $L(K)$ and $\bd (K)$ the {\it perimeter} and the {\it boundary} of a convex figure $K$, respectively.
Recall that a {\it convex polygon} is the convex hull of a finite set of points in $\mathbb{R}^2$.
Since we will work only with convex polygons in the Euclidean plane, we will sometimes refer to them simply as {\it polygons} for brevity.
As references on convex geometry, one can advise, e.~g., \cite{Ball1997, BoFe1987, Had1957, MaMoOl}.
\smallskip

{\it A side $AB$ of a convex polygon $P$ is said to have an $l$-strut} if there exists a point $C\in P$ such that
such that $d(A,C)=d(B,C) = l$. It should be noted that $C$ should not be a vertex of $P$ in this definition.
For $l=1$ we will use the term {\it strut} instead of $1$-strut.

A convex polygon $P$ in Euclidean plane is said to have {\it the $\Delta(l)$ property} if
every its side has an $l$-strut (i.~e., for the endpoints $A$ and $B$ of every side of $P$, there is a point $C\in P$ such that $d(A,C)=d(B,C) = l$).
Let us recall Problem 1 from \cite{BMNN2021}, that is related to the study of self Chebyshev radii for boundaries of convex figures
(this geometric topic appeared due to \cite{Walter2017}).

\begin{problem}[\cite{BMNN2021}]
Given a real number $l>0$ and a natural number $n \geq 3$, determine the best possible constant $C(n,l)$ such that the inequality
$L(P) \geq C(n,l)$ holds for the perimeter $L(P)$ of every convex polygon $P$ with $n$ vertices that satisfies the $\Delta(l)$ property.
\end{problem}

It is easy to show that $C(3,l)=3l$. On the other hand, until now the answer for $n\geq 4$ was unknown.
One of the goals of this paper is the solution of the above problem. It is clear that, using similarities in the Euclidean plane, we may restrict our attention to the case $l=1$.
In what follows, we consider only the case $l=1$ and, for brevity, we will call the $\Delta(1)$ property {\it the $\Delta$ property}.
Our first main result is the following.

\begin{theorem}\label{th.minim}
Given a natural number $n \geq 3$. The perimeter $L(P)$ of any polygon~$P$ with $n$ vertices, that satisfies
the $\Delta$ property, satisfies $L(P)\geq 3$. Moreover, this inequality cannot be improved, while the equality $L(P)= 3$ holds
if and only if $P$ is a regular triangle with unit side.
\end{theorem}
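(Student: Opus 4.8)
The plan is to reduce the lower bound on $L(P)$ to a statement about the width of $P$ in every "side direction", and then to exploit the fact that a polygon with the $\Delta$ property contains, for each side, an isoceles triangle of side length $1$ attached to that side. First I would fix a side $AB$ of $P$ and its $1$-strut $C\in P$, so $d(A,C)=d(B,C)=1$. The key geometric observation is that the segment $[A,B]$ together with the point $C$ already forces the polygon to be "large" in the direction perpendicular to $AB$ unless $AB$ itself is long: either $d(A,B)$ is not too small, or $C$ is far from the line $AB$, and in the latter case the two unit segments $[A,C]$, $[B,C]$ contribute a lot of perimeter along the boundary path from $A$ to $B$ not through the side $AB$.

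The main step I would carry out is the following normalization and case analysis. Pick the side $AB$ of $P$ of maximal length, say $d(A,B)=s$, and let $C$ be its strut. Since $C\in P$ and $P$ is convex, the boundary $\bd(P)$ consists of the side $[A,B]$ and a convex arc $\gamma$ from $A$ to $B$ on the other side; moreover $C$ lies in the closed region bounded by $[A,B]$ and $\gamma$, so $\gamma$ "encloses" $C$. The length of $\gamma$ is at least the length of the shortest convex curve from $A$ to $B$ passing on the far side of $C$, which (by convexity of the triangle $ABC$ and the standard fact that the perimeter of an inner convex curve is at most that of an outer one) is at least $d(A,C)+d(C,B)=2$ when $C$ is genuinely separated from $[A,B]$ by $\gamma$, hence $L(P)=s+L(\gamma)\ge s+2$. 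If $s\ge 1$ this already gives $L(P)\ge 3$. The remaining case is $s<1$, i.e. every side of $P$ has length less than $1$; here I would argue that, since the strut $C$ of $AB$ satisfies $d(A,C)=d(B,C)=1>s$, the point $C$ cannot be "close" to $[A,B]$, and in fact one can bound the diameter or the width of $P$ from below and then use the isoperimetric-type inequality $L(P)\ge 2\,\mathrm{diam}(P)$ (perimeter of a convex figure is at least twice its diameter), combined with the observation that $\mathrm{diam}(P)\ge d(A,C)=1$ always — wait, that only gives $L(P)\ge 2$. So in the small-side case I would instead chain struts: take a vertex $V$ of $P$, consider the two sides at $V$, each of length $<1$, each with a strut, and track how the unit-distance constraints propagate; the goal is to produce three "independent" unit-length chords whose associated boundary arcs are essentially disjoint, summing to perimeter $\ge 3$.

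For the equality case, I would show that $L(P)=3$ forces $s<1$ to be impossible unless degeneracies occur, or forces the arc $\gamma$ above to coincide with the two segments $[A,C]\cup[C,B]$ and $s$ to equal $1$ — i.e. $P$ is the triangle $ABC$ with $AB=1$ and $AC=BC=1$, the equilateral triangle. Concretely, equality in $L(P)\ge s+2$ requires $\gamma=[A,C]\cup[B,C]$ (so $P$ is the triangle $ABC$), and then the $\Delta$ property applied to the sides $AC$ and $BC$, each needing a unit strut inside the triangle, together with $L(P)=s+2=3$ (so $s=1$ and the triangle has sides $1,1,1$... but one must check the struts of $AC$ and $BC$ actually exist, which they do, being the opposite vertices), pins down the equilateral triangle; I would also rule out non-triangular $P$ achieving equality by noting any extra vertex strictly increases $L(\gamma)$ beyond $2$.

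The hard part will be the small-side case $s<1$: there the simple "arc encloses the strut" estimate only yields $L(P)\ge s+2 < 3$, so a genuinely different argument is needed — most plausibly a careful propagation of the unit-distance struts around the boundary showing they cannot all "overlap", or a clever choice of direction in which the width of $P$ is controlled by the struts. I expect this to require the bulk of the technical work, and it is likely where the intricacy alluded to in the abstract resides.
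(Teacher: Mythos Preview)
Your case split into ``some side has length $\geq 1$'' versus ``all sides have length $<1$'' matches the paper exactly, and your treatment of the first case is essentially the paper's Proposition~\ref{pr.doc1}: since $\triangle ABC\subset P$ with $|AC|=|BC|=1$ and $|AB|\geq 1$, monotonicity of perimeter gives $L(P)\geq L(\triangle ABC)\geq 3$, with equality forcing $P=\triangle ABC$ equilateral. Good.

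The genuine gap is the second case, $s<1$, which you correctly flag as the hard part but do not actually attack. Your tentative ideas --- chaining struts at a vertex, finding three ``independent'' unit chords with disjoint boundary arcs, or bounding the width --- are not developed, and it is far from clear that any of them can be made to work. In particular, the ``three disjoint arcs each of length $\geq 1$'' picture fails in general: the struts for different sides can be very close to one another, and the boundary arcs they control overlap heavily (think of the narrow isosceles examples in the paper, or the $n$-gons near the equilateral triangle). Width or diameter bounds alone only give $L(P)\geq 2$, as you noticed.

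The paper's actual device is quite different and worth knowing. One passes to the \emph{difference body} $D(P)=P+(-P)$, a centrally symmetric polygon with $L(D(P))=2L(P)$. The $\Delta$ property of $P$ translates into what the paper calls the $\Delta^s$ property of $D(P)$: for each side $AB$ of $D(P)$ there is a rectangle inscribed in $D(P)$ with one pair of sides equal to $\overrightarrow{AB}$ and all four vertices on the unit circle $S(O,1)$. One then proves (Theorem~\ref{th.centsym}) that any centrally symmetric polygon with the $\Delta^s$ property and all sides $\leq 1$ has perimeter at least $6$, by comparing the boundary of $D(P)$ with the unit circle arc by arc: portions of $\partial D(P)$ outside $B(O,1)$ are longer than the corresponding arcs, and portions inside are chords of length $\leq 1$, hence at least $3/\pi$ times the arc. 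Summing gives $L(D(P))\geq 6$, so $L(P)\geq 3$. The equality characterization then reduces to showing that $D(P)$ being a regular unit hexagon forces $P$ to be the unit equilateral triangle (Proposition~\ref{pr.proreghex}).

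So: the missing idea is the passage to the difference body and the circle-comparison argument; nothing in your sketch points toward it, and the local strut-chaining you propose does not obviously close.
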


At first, let us discuss the simplest case $n=3$.
Taking into account that the perimeter of any triangle that lies inside a given triangle $ABC$ (that satisfy the $\Delta$ property)
is not greater than
the perimeter of $\triangle ABC$ (see Proposition \ref{monotper} below), we see that $a+b =|BC|+|AC| \geq |BD|+|AD]=2$, see Fig. \ref{Fig_uniq}~a).
By analogy, we get $a+c\geq 2$ and $b+c \geq 2$.
Summing up these three inequalities, we get $a+b+c\geq 3$. This proves Theorem \ref{th.minim} for $n=3$.

\begin{figure}[t]
\begin{minipage}[h]{0.48\textwidth}
\center{\includegraphics[width=0.9\textwidth, trim=0mm 0mm 0mm 0mm, clip]{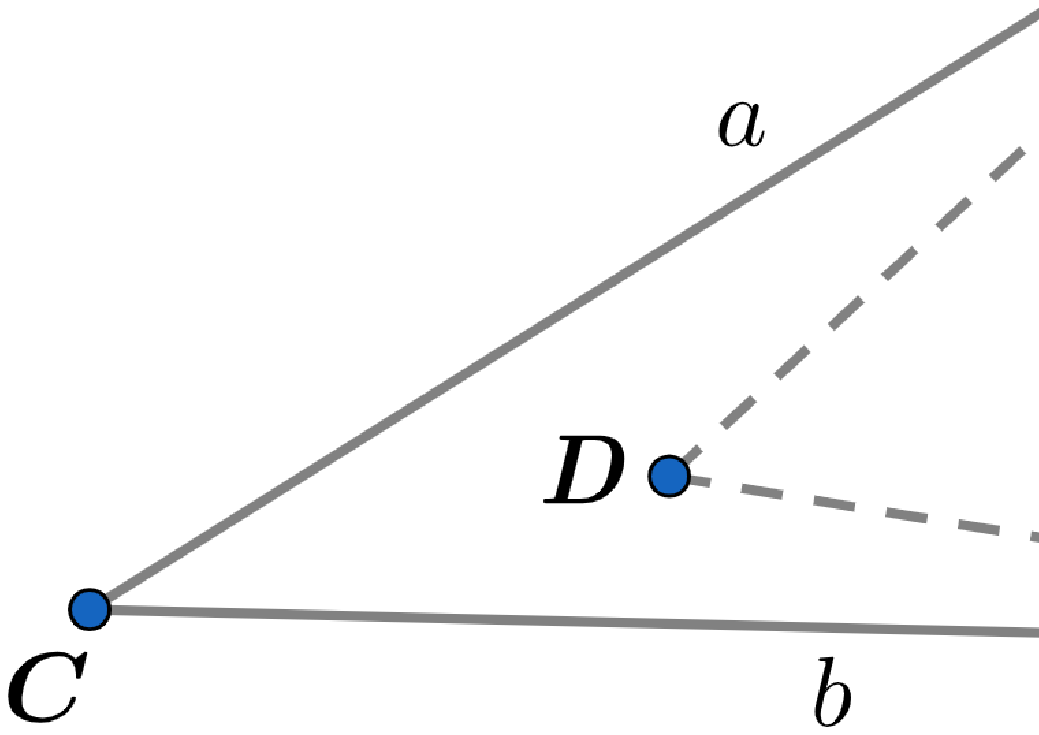} \\ a)}
\end{minipage}
\hspace{1mm}
\begin{minipage}[h]{0.48\textwidth}
\center{\includegraphics[width=0.9\textwidth, trim=0mm 0mm 0mm 0mm, clip]{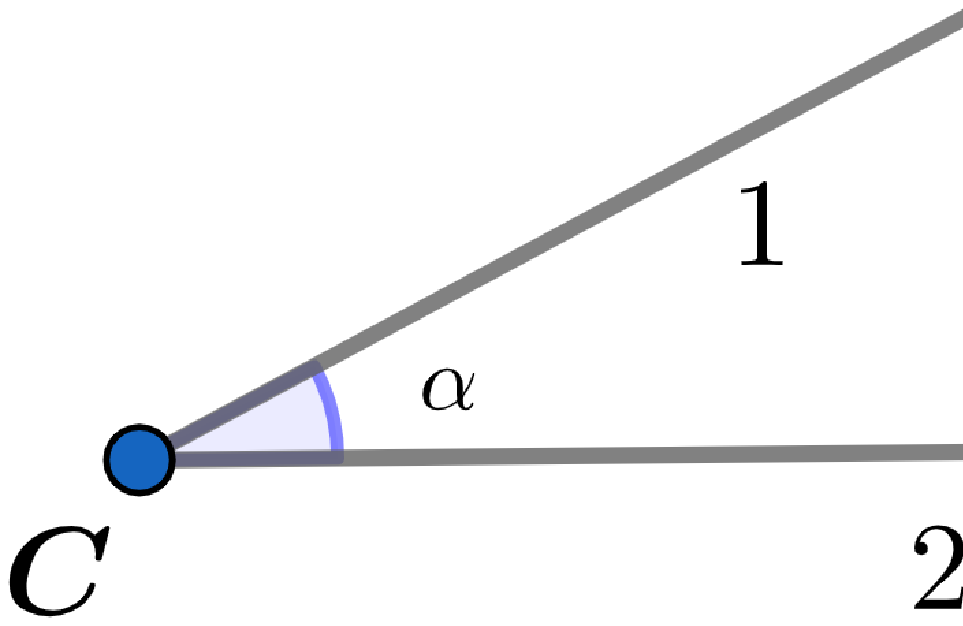} \\ b)}
\end{minipage}
\caption{
a) A triangle with the $\Delta$ property;
b) A ``narrow'' isosceles triangle with the $\Delta$ property.
}
\label{Fig_uniq}
\end{figure}

A regular triangle with unit side satisfies the $\Delta$ property and has perimeter $3$ (the smallest possible).
It should be noted that a triangle $ABC$ with the $\Delta$ property should not contain a regular triangle with unit side.
To show this let us consider a ``narrow'' isosceles triangle $ABC$ such that $\angle ACB =\alpha$, $|BA|=|CA|=2\cos \alpha$.
It is easy to see that this triangle satisfies the $\Delta$ property if $\alpha \leq \pi/3$, see Fig.~\ref{Fig_uniq}~b).
On the other hand, such triangle does not contain a regular triangle with unit side for sufficiently small $\alpha >0$.
\smallskip

Let us show that for any given $n\geq 4$, there are $n$-gons $P$ with the $\Delta$ property such that the perimeter $L(P)$ is arbitrarily close to $3$.
We construct such polygons in a small neighborhood (with respect to the Hausdorff metric) of a regular triangle with unit side.
Let us fix a small number $\varepsilon >0$. We start from the regular triangle $A_1A_2A_3$ with side length $1+\varepsilon$.
Further, we fix a number $\alpha>0$ such that $\alpha<\min\left\{\pi/n,\varepsilon\right\}$ and consider the points $A_4, A_5,\dots, A_{n-1}, A_n$ in the plane such that
$|A_1A_i|=1+\varepsilon$ and $\angle A_{i-1}A_1A_i=\alpha$, $i=4,5\dots,n$. Let $P$ be the convex hull of the points $A_i$, $i=1,2,\dots,n$, see Fig.~\ref{Fig_uniq2}.
For any side $A_iA_{i+1}$ of $P$ with $i=1,2,\dots,n-1$ we can easily choose a point $B\in P$ such that $|A_iB|=|A_{i+1}B|=1$.
The same is true for the side $A_1A_n$ for sufficiently small $\alpha>0$, i.~e., the polygon $P$ has the $\Delta$ property for sufficiently small $\alpha>0$.
For the perimeter of $P$ we have the following estimate:
\begin{eqnarray*}
L(P)&=&|A_1A_n|+|A_1A_2|+|A_2A_3|+\sum_{i=3}^{n-1} |A_iA_{i+1}|=3+(n-3)|A_3A_4|\\
&=&(1+\varepsilon)\cdot \bigl(3+2(n-3)\sin(\alpha/2)\bigr)<(1+\varepsilon)\cdot\bigl(3+(n-3)\alpha)\bigr).
\end{eqnarray*}
Now, it is clear that for sufficiently small $\varepsilon>0$ and $\alpha>0$, $L(P)$ is as close to $3$ as we want.

\begin{figure}[t]
\begin{minipage}[h]{0.45\textwidth}
\center{\includegraphics[width=0.9\textwidth, trim=0mm 0mm 0mm 0mm, clip]{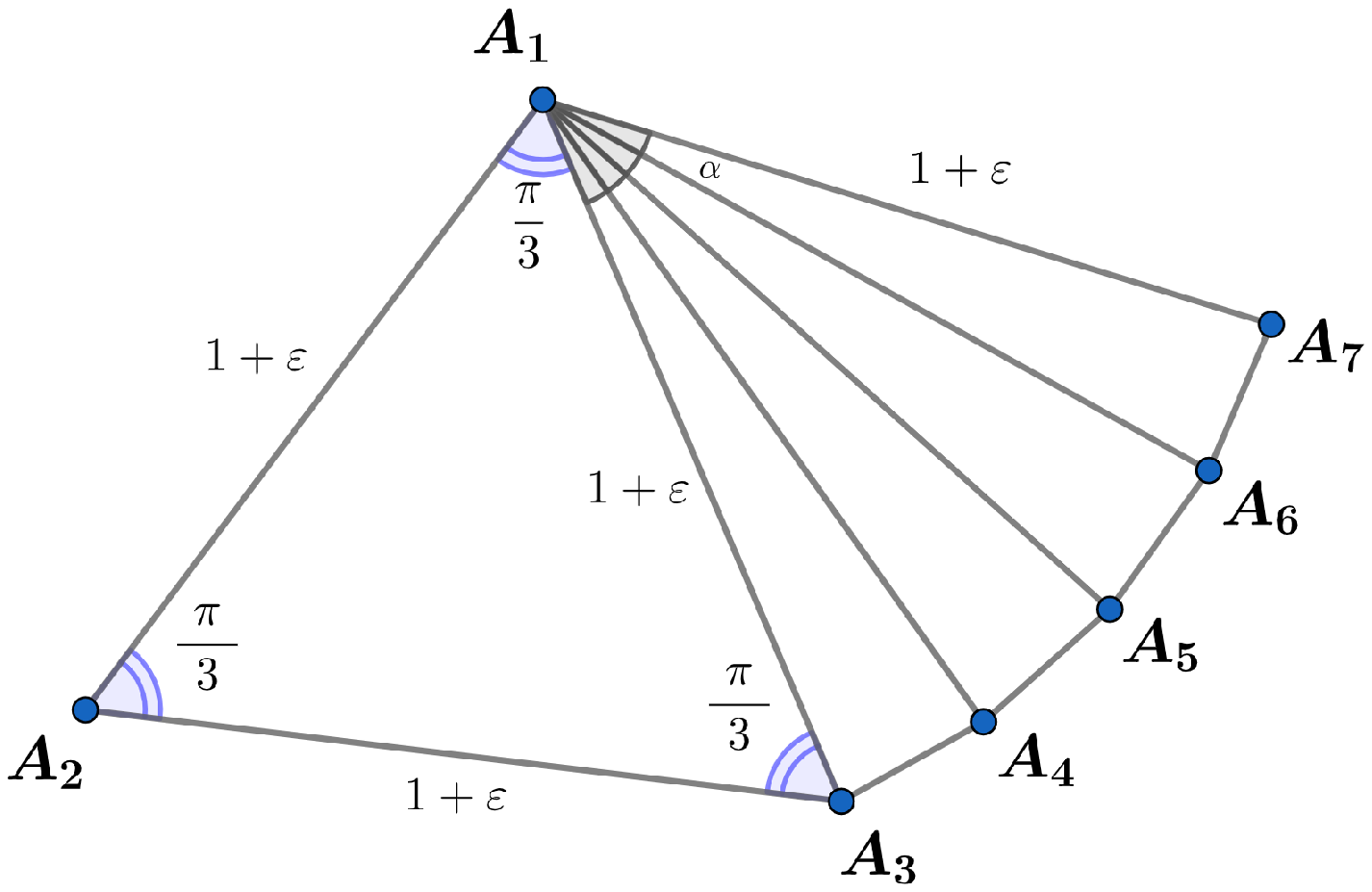} \\ a)}
\end{minipage}
\hspace{1mm}
\begin{minipage}[h]{0.45\textwidth}
\center{\includegraphics[width=0.9\textwidth, trim=0mm 0mm 0mm 0mm, clip]{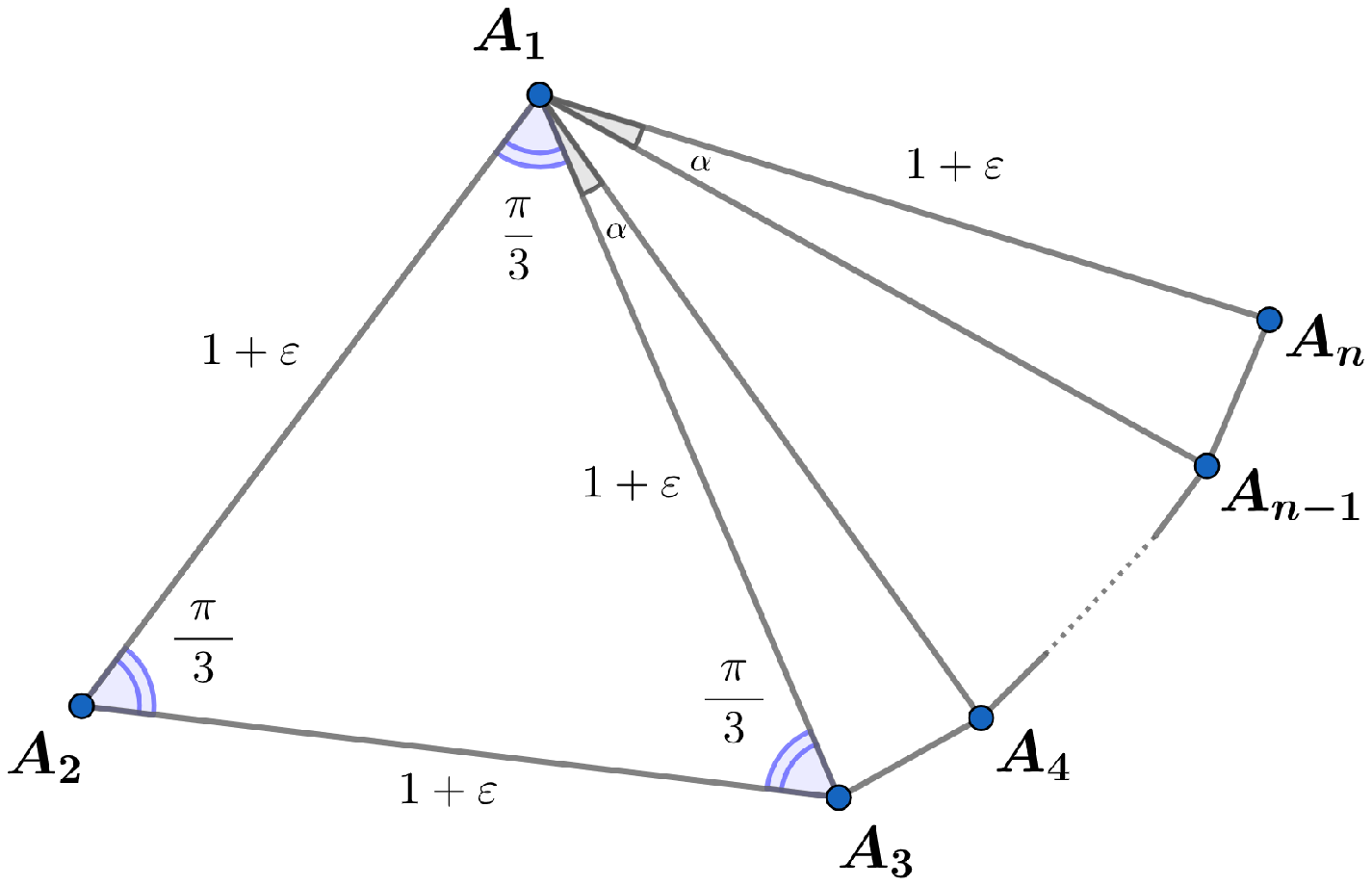} \\ b)}
\end{minipage}
\caption{ An $n$-gon with the $\Delta$ property for
a) $n=7$;
b) $n\geq 4$.
}
\label{Fig_uniq2}
\end{figure}

It is reasonable to try to weaken the conditions of Theorem \ref{th.minim}.
The most natural variant is to prescribe the presence of struts not on all sides of the polygon, but {\it only on some of them}.

The presence of a strut for only one side of the polygon $P$ is not enough for the inequality $L(P)\geq 3$.
Indeed, we can consider triangles $ABC$ with $|AC|=|BC|=1$ and with very short $AB$.
In this case the side $AB$ has a strut, but $L(P)$ can be smaller than $2+\varepsilon$ for any given $\epsilon >0$.
On the other hand, if we assume that a side $AB$ has a strut and $|AB|\geq 1$, then obviously we get $L(P)\geq 3$.

It is interesting that a similar idea works in the case of two  sides of $P$, adjacent to each other, but
the corresponding result is much more difficult to obtain.
Our second main result is the following.

\begin{theorem}\label{th.minim.2side}
Given a natural number $n \geq 3$. Let $P$ be a convex polygon with the consecutive vertices $A_1,A_2,A_3,\dots, A_{n-1},A_n$
such that the sides $A_1A_2$ and $A_2A_3$ have struts and $|A_1A_2|+|A_2A_3|\geq 1$. Then the perimeter $L(P)$ of $P$ satisfies the inequality
$L(P) \geq 3$.
\end{theorem}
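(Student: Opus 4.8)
The plan is to strip $P$ down to a polygon with at most five vertices using the monotonicity of the perimeter under inclusion (Proposition~\ref{monotper}), and then to attack the resulting finite‑dimensional problem by a case analysis whose hard cases are reduced to one explicit optimization. It is worth keeping in mind that the constant $3$ is sharp: it is attained by the quadrilateral $A_1A_2A_3C$ with $A_1,A_2,A_3$ on a minor arc of a unit circle, $C$ its centre, and $|A_1A_2|+|A_2A_3|=1$ (then $C$ is a common strut of $A_1A_2$ and $A_2A_3$, and $L=|A_1A_2|+|A_2A_3|+|A_3C|+|CA_1|=1+1+1=3$), so the argument must be essentially lossless near this configuration. \emph{Step 1 (reduction).} Pick strut points $C_1,C_2\in P$ of $A_1A_2$ and $A_2A_3$, so $|A_1C_1|=|A_2C_1|=|A_2C_2|=|A_3C_2|=1$, and set $P_0:=\co\{A_1,A_2,A_3,C_1,C_2\}\subseteq P$. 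Since $A_1,A_2,A_3$ are extreme points of $P$, they are vertices of $P_0$; since $A_1A_2$ and $A_2A_3$ are edges of $P$ and neither $C_1$ nor $C_2$ can lie beyond them (the constraint $|A_1C_1|=|A_2C_1|=1$ forbids $C_1$ from lying on the line $A_1A_2$ outside $[A_1,A_2]$, and similarly for $C_2$), these segments remain edges of $P_0$; and $C_1,C_2\in P_0$ still witness the two struts. By Proposition~\ref{monotper}, $L(P)\ge L(P_0)$, so it suffices to prove $L(P_0)\ge3$, and now $P_0$ has at most five vertices. Write $a=|A_1A_2|$, $b=|A_2A_3|$, $c=|A_1A_3|$; then $a,b\le2$, $c\le a+b$, and $a+b\ge1$.

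\emph{Step 2 (cheap bounds and case split).} From $\co\{A_1,A_2,C_1\},\co\{A_2,A_3,C_2\}\subseteq P_0$ and Proposition~\ref{monotper} we get $L(P_0)\ge a+2$ and $L(P_0)\ge b+2$; from the elementary inequality $L(K)\ge2\operatorname{diam}K$ for a convex figure $K$ we get $L(P_0)\ge2\operatorname{diam}P_0$. Hence we may assume $a<1$, $b<1$, and $\operatorname{diam}P_0<3/2$ (in particular $c<3/2$). I would then split according to the position of $C_1,C_2$ relative to the line $A_1A_3$. If neither lies strictly on the $A_2$‑side, the boundary arc of $P_0$ joining $A_3$ to $A_1$ consists of one or two edges with endpoints among $C_1,C_2$, and one shows — using $|A_1C_1|=|A_3C_2|=1$ and the fact that $C_1$, resp.\ $C_2$, sits over $A_1A_2$, resp.\ $A_2A_3$ — that its length is at least $2$, so $L(P_0)=a+b+(\text{this arc})\ge(a+b)+2\ge3$. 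If both $C_1,C_2$ lie (weakly) on the $A_2$‑side, then $C_1,C_2\in\triangle A_1A_2A_3$ and the comparisons $\co\{A_1,A_2,C_1\},\co\{A_2,A_3,C_2\}\subseteq\triangle A_1A_2A_3$ give $b+c\ge2$ and $a+c\ge2$, whence $c>1$; combined with $c<3/2$ and $a,b<1$ this confines us to the narrow band $c\in(1,3/2)$, $a,b\in(1/2,1)$, where these inequalities only yield $a+b+c>5/2$. The mixed case, when exactly one of $C_1,C_2$ lies strictly on the $A_2$‑side, is intermediate and I would reduce it — using that $P_0$ then contains a quadrilateral with two unit‑length edges, and again extracting $c>1$ — to the same band.

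\emph{Step 3 (the core estimate, and the main obstacle).} What remains is to prove $L(P_0)\ge3$ on the narrow band just isolated. For this I would place $A_2$ at the origin, parametrize $A_1,A_3$ through $a$, $b$ and the angle $\angle A_1A_2A_3$, write $C_1$ (resp.\ $C_2$) as the explicit intersection, on the prescribed side, of the unit circle about $A_2$ with the unit circle about $A_1$ (resp.\ $A_3$), and minimize $L(P_0)$ subject to $a+b\ge1$ and to the membership conditions $C_1,C_2\in P_0$. The crux is that all the crude inequalities above only give $L(P_0)\ge3-\varepsilon$ on this band, so one must exploit the \emph{exact} unit distances together with the \emph{interiority} of $C_1$ and $C_2$ to recover the missing $\varepsilon$: concretely, I expect one must show that as long as the configuration differs from the degenerate state $C_1=C_2$, $|A_1A_2|+|A_2A_3|=1$ (where $L(P_0)=3$), the perimeter $L(P_0)$ can be strictly decreased by an appropriate one‑parameter deformation that keeps the hypotheses, so that the minimum is attained exactly at that extremal configuration. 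This reduction to the extremal configuration is the laborious step, and it is where essentially all the difficulty of the theorem is concentrated; the earlier steps are only bookkeeping by comparison.
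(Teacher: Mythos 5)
Your Step 1 and the easy half of your Step 2 coincide with the paper's argument: the paper likewise passes to $P_1=\co\{A_1,A_2,A_3,E,F\}$ via Proposition~\ref{monotper} and splits into the case where $[A_1,E]$ and $[A_3,F]$ lie on the boundary of $P_1$ (where $L(P_1)=1+1+(|A_1A_2|+|A_2A_3|)+|EF|\ge 3$) and the case where they cross, which is exactly the pentagon configuration of Theorem~\ref{th.perpent}. But your proof stops where the theorem actually begins: your Step 3 is a plan, not an argument, and it is precisely the content of Theorem~\ref{th.perpent}, which the paper proves in Section~\ref{sect.2} by a long constrained critical-point analysis of $\mathcal{B}(\alpha,\beta,\theta)$ on the set $\Omega$, with elimination of variables and Gr\"{o}bner-basis computations (Propositions~\ref{pr.perpen1}--\ref{pr.perpen3}). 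As you note yourself, none of the cheap bounds ($L\ge a+2$, $L\ge b+2$, $a+c\ge 2$, $b+c\ge 2$, the diameter bound) gets past $L>5/2$ on the residual region, so the entire difficulty is left open.

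Moreover, the strategy you sketch for closing the gap rests on a false premise. You assert that the minimum is attained exactly at the degenerate state $C_1=C_2$, $|A_1A_2|+|A_2A_3|=1$, and that elsewhere the perimeter can be strictly decreased by a deformation preserving the hypotheses. The paper shows this is not so: besides the continuous family of quadrangles with $E=F$ and the degenerate triangle configurations, there is a genuinely non-degenerate extremal pentagon with $\alpha=\beta=\gamma=\arccos(1/4)$ (all five vertices concyclic; the integer pentagon of Remark~\ref{re.eqv.pent2}) whose perimeter equals $3$ exactly. Any ``strictly decrease unless degenerate'' deformation argument must break down in a neighbourhood of that pentagon, and detecting it is exactly why the paper resorts to exact algebraic computation. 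A secondary flaw: your case split by the side of the line $A_1A_3$ does not isolate the easy case. The crossing configuration, with boundary order $A_3\to C_1\to C_2\to A_1$, can occur with both strut points beyond that line (it does for the special pentagon), and there your claimed bound ``arc length $\ge 2$'' is not a consequence of $|A_1C_1|=|A_3C_2|=1$: it holds with equality at the special pentagon and fails outright in nearby and degenerate configurations (e.g.\ when $C_1\to A_3$, $C_2\to A_1$ the arc length tends to $1$), so that sub-case still carries the full weight of Theorem~\ref{th.perpent}.
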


It should be noted that a regular triangle with unit side is not a unique polygon with $L(P)=3$ in the above theorem. There are a continuous family of quadrangles with
this property and one distinguished pentagon, see the details in Propositions \ref{pr.perpen3p} and \ref{pr.perpen3}.
\smallskip

The above examples of polygons with the perimeters close to $3$ and Theorem \ref{th.minim.2side} naturally  entail the following

\begin{conjecture}
Given natural numbers $n \geq 3$ and $m=1,\dots,n-1$. Let $P$ be a convex polygon $P$ with the consecutive vertices $A_1,A_2,A_3,\dots, A_{n-1},A_n$
such that the sides $A_1A_2, A_2A_3, \dots, A_mA_{m+1}$ have struts and $|A_1A_2|+|A_2A_3|+\cdots +|A_mA_{m+1}|\geq 1$.
Then the perimeter $L(P)$ of $P$ satisfies the inequality
$L(P) \geq 3$.
\end{conjecture}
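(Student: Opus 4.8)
The plan is to push the mechanism behind the two settled cases $m=1$ (the elementary observation recorded in the text) and $m=2$ (Theorem~\ref{th.minim.2side}) as far as it will go; both may be used freely, so assume $m\ge 3$. For $j=1,\dots,m$ fix a point $C_j\in P$ with $d(A_j,C_j)=d(A_{j+1},C_j)=1$ realizing the strut on $A_jA_{j+1}$, and set $R=\co\bigl(\{A_1,\dots,A_{m+1},C_1,\dots,C_m\}\bigr)\subseteq P$. By monotonicity of the perimeter under inclusion (Proposition~\ref{monotper}), $L(P)\ge L(R)$, so it suffices to prove $L(R)\ge 3$. Each $A_i$ is a vertex of $P$, hence $A_i\notin\co(P\setminus\{A_i\})$ and therefore $A_i$ is also a vertex of $R$; moreover the broken line $A_1A_2\cdots A_{m+1}$, being a union of edges of $P$, lies on $\bd P$ and hence on $\bd R$. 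Thus $\bd R$ splits into this \emph{lower chain}, of length $\sum_{i=1}^m|A_iA_{i+1}|\ge 1$, and an \emph{upper chain} from $A_{m+1}$ back to $A_1$ passing through some of the $C_j$, and everything reduces to showing that the upper chain has length at least $3-\sum_i|A_iA_{i+1}|$ --- in particular at least $2$ when the lower chain has length exactly $1$.

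The decisive feature is the cyclic order of $A_1,C_1,\dots,C_m,A_{m+1}$ on $\bd R$. In the \emph{untangled} order $A_1,C_1,C_2,\dots,C_m,A_{m+1}$ the upper chain has length
\[
|A_1C_1|+\sum_{j=1}^{m-1}|C_jC_{j+1}|+|A_{m+1}C_m|=1+\sum_{j=1}^{m-1}|C_jC_{j+1}|+1\ge 2 ,
\]
using only $|A_1C_1|=|A_{m+1}C_m|=1$, so the inequality holds, with equality forcing all $C_j$ to coincide (whence $A_1,\dots,A_{m+1}$ lie on the unit circle about that common point) and the lower chain to have length $1$; this should reproduce the extremal configurations of Propositions~\ref{pr.perpen3p} and~\ref{pr.perpen3}. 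The difficulty lies entirely in the \emph{tangled} orders, where some $C_k$ is not flanked on the upper chain by both of its partners $A_k,A_{k+1}$ and the ``$1+\dots+1$'' bookkeeping collapses. Already for $m=2$ the single tangled order $A_1,C_2,C_1,A_3$ is exactly what forces an intricate trigonometric optimization in the proof of Theorem~\ref{th.minim.2side}; by contrast, soft arguments --- e.g.\ ``$\bd P$ with the edge $A_jA_{j+1}$ deleted has length $\ge|A_jC_j|+|C_jA_{j+1}|=2$'', whence $L(P)\ge 2+\max_i|A_iA_{i+1}|\ge 2+\tfrac1m$ --- give a bound that \emph{worsens} as $m$ grows, which is precisely why the struts must be exploited jointly rather than one at a time.

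To dispose of the tangled orders for $m\ge 3$ I would try three increasingly ambitious routes. (i) Induction on $m$: if some pair of consecutive sides of the strut-chain has total length $\ge 1$, Theorem~\ref{th.minim.2side} applied to that pair finishes it; otherwise all consecutive pairs are short, which already rigidifies the configuration, and one splits the chain at a well-chosen $A_k$ and recombines two partial estimates --- the obstacle being that the hypothesis on the lower-chain length does not descend to sub-chains, so the inductive statement must be strengthened (say, to a lower bound for $L(P)$ in terms of the lower-chain length together with the mutual positions of the $C_j$). (ii) Compactness: $L(R)$ is continuous on a compact space of admissible configurations, so a minimizer exists, and at one the first-order conditions --- which, as for $m=2$, should be clean relations among the angles $\angle A_jC_kA_{j'}$ and the base angles of the isosceles triangles $A_kC_kA_{k+1}$ --- together with the combinatorial constraints ought to force the untangled order or a short explicit list of degenerate minimizers, all with $L(R)\ge 3$. (iii) An ``untwisting'' deformation: show that any tangled configuration can be deformed, moving the $A_i$ (and hence the induced $C_i$) while keeping all strut conditions and the lower-chain length, so as not to increase $L(R)$ until an untangled configuration is reached. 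I expect the real obstacle --- and the reason the statement is still only a conjecture --- to be that the number of tangled combinatorial types grows quickly with $m$, and that while the $m=2$ argument shows each type to be individually tractable, it does so at the price of a computation that does not obviously stabilize in $m$.
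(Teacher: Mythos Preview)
The statement you are addressing is presented in the paper as a \emph{conjecture}, not a theorem; the paper offers no proof of it. The authors establish only the cases $m=1$ (the remark after Theorem~\ref{th.minim}) and $m=2$ (Theorem~\ref{th.minim.2side}), and then record the general statement as Conjecture~1 without further argument. There is therefore no ``paper's own proof'' to compare your attempt against.

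Your proposal is not a proof either, and you are candid about this: you reduce to the hull $R$, dispose of the ``untangled'' boundary order by the obvious $1+\cdots+1\ge 2$ count, and then list three possible attacks on the tangled orders (induction with a strengthened hypothesis, a variational/compactness argument, an untwisting deformation) without carrying any of them through. That is a reasonable research outline, and your diagnosis of where the difficulty lies --- the combinatorial explosion of tangled orders, each of which for $m=2$ already required the delicate pentagon optimization of Section~\ref{sect.2} --- matches the spirit of the paper. Two small cautions on the part you do claim: first, your untangled bound tacitly assumes that \emph{every} $C_j$ is an extreme point of $R$, whereas some $C_j$ may lie in the interior or on an edge of $R$; you really only need $C_1$ and $C_m$ to sit on the upper chain in the right order, so the statement should be phrased and justified accordingly. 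Second, the inductive route (i) has the defect you yourself flag --- the length hypothesis does not pass to subchains --- and it is not clear what strengthened statement would both survive the induction and still be provable at the base; absent a concrete candidate, this is a hope rather than a plan.

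In short: the gap is real and acknowledged on both sides. Nothing in your write-up closes it, and nothing in the paper does either.
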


The paper is organized as follows. In Section \ref{sect.1} we consider some  important information on convex figures and
prove some useful results related to the difference bodies of convex polygons (in particular, with the $\Delta$ property).
Moreover, some results (in particular, Theorem~\ref{th.centsym}) on centrally symmetric polygons are obtained.
In Section \ref{sect.2} we study some special pentagons and prove Theorem \ref{th.perpent}.
In the final Section \ref{sect.3} we prove the main results of the paper.
\smallskip

The authors are grateful for Endre Makai, Jr. and Horst Martini for useful discussions of issues related to the subject of this paper.

\section{Notation and some auxiliary results}\label{sect.1}

The following property (that of the monotonicity of the perimeter) of convex figures is well-known (see, e.g., \cite[\S 7]{BoFe1987}).

\begin{prop}\label{monotper}
If convex figures $K_1$ and $K_2$ in the Euclidean plane are such that $K_1\subset K_2$,
then their perimeters satisfy the inequality $L(K_1) \leq L(K_2)$, and the equality holds if and only if $K_1=K_2$.
\end{prop}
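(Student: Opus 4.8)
The plan is to realize the perimeter of a convex figure as the supremum of the perimeters of inscribed convex polygons, and then exploit the fact that a convex polygon inscribed in $K_1$ is automatically inscribed in $K_2$. Concretely, for a convex figure $K$ define $L(K)$ as the supremum of $L(Q)$ over all convex polygons $Q$ with vertices on $\bd(K)$ (equivalently, over all convex polygons $Q\subset K$, since among those the ones with vertices on the boundary maximize the perimeter). The first step is to record the elementary fact that if $Q_1\subset Q_2$ are convex polygons then $L(Q_1)\le L(Q_2)$; this follows by repeatedly applying the triangle inequality, replacing an edge of $Q_1$ cut off by $Q_2$ with the (longer) polygonal path of $Q_2$'s boundary that it subtends, and iterating vertex by vertex. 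Passing to the supremum and using $K_1\subset K_2$ then gives $L(K_1)\le L(K_2)$ immediately, because every convex polygon inscribed in $K_1$ lies in $K_2$.

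For the equality case, suppose $K_1\subsetneq K_2$; I want to produce an inscribed polygon in $K_2$ strictly longer than $L(K_1)$. Pick a point $p\in K_2\setminus K_1$. By the separation theorem there is a line $\ell$ with $K_1$ on one side and $p$ on the other (or $p\notin K_1$ suffices to find a supporting line of $K_1$ strictly separating $p$ from the interior); let $H$ be the open half-plane containing $p$. Now take a convex polygon $Q\subset K_1$ with $L(Q)$ within $\delta$ of $L(K_1)$, chosen moreover so that $Q$ has a full edge $e=[a,b]$ lying on the boundary line of $H$ (one can arrange this by refining $Q$ near the point where $\bd(K_1)$ meets that line). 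Replacing the edge $e$ of $Q$ by the two segments $[a,p]$ and $[p,b]$ yields a new convex polygon $Q'$ with $Q'\subset K_2$ and $L(Q')=L(Q)-|ab|+|ap|+|pb| \ge L(Q)+c$ for a fixed positive constant $c$ depending only on the distance from $p$ to the line $\ell$ and the geometry near $e$ (strict triangle inequality, since $p$ is off the line $\ell$). Choosing $\delta<c$ gives $L(K_2)\ge L(Q')>L(K_1)$, the desired strict inequality.

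The main obstacle, and the place requiring the most care, is the technical bookkeeping in the equality case: ensuring that the approximating polygon $Q$ can be taken to have an honest edge on the separating line (so that the "flap out to $p$" operation genuinely increases the perimeter by a definite amount, not just weakly), and verifying that $Q'$ remains convex and stays inside $K_2$. Both are handled by working locally near a point of $\bd(K_1)\cap\ell$ and using convexity of $K_2$ together with $p\in K_2$; the detour $[a,p]\cup[p,b]$ stays in $K_2$ because $a,b,p\in K_2$ and $K_2$ is convex, and convexity of $Q'$ is clear since we only pushed one edge outward across a supporting line of $Q$. The monotone inequality part itself is routine. I would also remark that this is classical and cite \cite[\S 7]{BoFe1987} for the reader who prefers to take it as known.
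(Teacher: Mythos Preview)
The paper does not actually prove this proposition: it simply records it as well known and cites \cite[\S 7]{BoFe1987}. So there is no ``paper's proof'' to compare against, and your write-up is supplying an argument where the paper gives none.

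Your treatment of the inequality $L(K_1)\le L(K_2)$ is correct and standard: reduce to inscribed polygons, and for nested convex polygons cut $Q_2$ down to $Q_1$ by successively slicing with the supporting lines along the edges of $Q_1$, each slice replacing a polygonal arc by a chord and hence not increasing the perimeter.

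There is, however, a genuine gap in your equality argument. You choose a line $\ell$ separating $p\in K_2\setminus K_1$ from $K_1$ (or supporting $K_1$), and then require an approximating polygon $Q\subset K_1$ to have an \emph{edge} $e=[a,b]$ lying on $\ell$. But if $\ell$ is a strict separator then $K_1\cap\ell=\emptyset$, and if $\ell$ is a supporting line then $K_1\cap\ell$ may be a single point; in either case no polygon contained in $K_1$ can have a nondegenerate edge on $\ell$. Your parenthetical ``refining $Q$ near the point where $\bd(K_1)$ meets that line'' does not produce such an edge, and without it the gain $|ap|+|pb|-|ab|$ is not bounded below independently of $\delta$.

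The fix is easy and in fact cleaner than the polygonal detour. Set $K_1'=\co(K_1\cup\{p\})\subset K_2$. The boundary of $K_1'$ is obtained from $\bd(K_1)$ by replacing the arc $\gamma$ between the two tangent points $t_1,t_2$ (the ``cap'' facing $p$) with the two segments $[t_1,p]$ and $[p,t_2]$. The convex region bounded by $\gamma$ and the chord $[t_1,t_2]$ lies inside the triangle $t_1pt_2$, so by the monotonicity just proved one gets $\mathrm{length}(\gamma)+|t_1t_2|\le |t_1p|+|pt_2|+|t_1t_2|$, with strict inequality since $p\notin K_1$. Hence $L(K_1)<L(K_1')\le L(K_2)$. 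Alternatively, one can bypass all of this with Cauchy's formula $L(K)=\int_0^{2\pi} h_K(\theta)\,d\theta$: from $K_1\subset K_2$ one has $h_{K_1}\le h_{K_2}$ pointwise, giving the inequality, and equality of the integrals of continuous functions forces $h_{K_1}\equiv h_{K_2}$, hence $K_1=K_2$.
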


\smallskip

If $X$ and $Y$ are sets in $\mathbb{R}^2$, then $X+Y=\left\{x+y\,|\, x\in X, y\in Y \right\}$
denotes {\it the Minkowski sum} of $X$ and $Y$. For given $t \in\mathbb{R}$ and $X\subset \mathbb{R}^2$, we consider $tX:=\left\{t\cdot x\,|\, x\in X \right\}$.
In what follows, we will often use $-X=\left\{-x\,|\, x\in X \right\}$.
Important properties and numerous applications of these operations can be found, for instance, in \cite{BoFe1987, Gard2006, MaMoOl}.

We will need a special transformation of convex polygons in $\mathbb{R}^2$.
For a convex polygon~$P$, we define {\it the difference body} by the formula $D(P)= P+(-P)$.
Note that {\it the central symmetral} of $P$, defined by $\lozenge(P)=\frac{1}{2} P+\frac{1}{2}(-P)$, differs from  the difference body $D(P)$
only by a dilatation factor of $1/2$, see e.~g.
\cite[P.~106]{Gard2006} or \cite{BiGaGro2017}.
It should be noted that the central symmetral has many important properties.
For instance, it is easy to check that the perimeters of the polygons $P$ and  $\lozenge(P)$ coincide (hence, the perimeter of $D(P)$ is two times larger).

\begin{remark}
The central symmetral was an important element
of the solution of one problem by L. Fejes T\'{o}th in \cite{NikRas2002}.
Let $P$ be a plane convex $n$-gon with side lengths $a_1,\dots,a_n$. Let $b_i$ be the length of the longest chord of $P$ parallel to the $i$-th side.
Then $3\leq\sum_{i=1}^n a_i/b_i \leq 4$, whereas $\sum_{i=1}^n a_i/b_i=3$ if and only if $P$ is a snub triangle obtained by cutting off
three congruent triangles from the corners of a triangle, while $\sum_{i=1}^n a_i/b_i=4$ if and only if $P$ is a parallelogram,
as it was conjectured and partially proven by L. Fejes T\'{o}th in \cite{FT1970}.
\end{remark}

It is clear that the difference body $D(P)$ of any convex polygon $P$ is a centrally symmetric polygon. In particular, the origin $O$ is the center of $D(P)$.
It is remarkable that the difference body of a polygon $P$ with the $\Delta$ property also has one special property.

We will say that a centrally symmetric polygon $P$ has {\it the $\Delta^s$ property} if for any its side $AB$, $P$ contains a rectangle $KLMN$ such that
$\overrightarrow{KL}=\overrightarrow{NM}=\overrightarrow{AB}$ and the distance from $O$ to any vertex of $KLMN$ is $1$.

The following result is very important for our goals.

\begin{prop}\label{pr.syssym}
Let $P$ be a polygon with the $\Delta$ property. Then
its difference body $D(P)=P+(-P)$ has the $\Delta^s$ property. Moreover, the perimeter $L\bigl(D(P)\bigr)$ is equal to $2\cdot L(P)$.
\end{prop}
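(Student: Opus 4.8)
The plan is to verify the two assertions of Proposition~\ref{pr.syssym} separately, starting with the easier statement about the perimeter. For the perimeter, recall that $D(P) = P + (-P) = 2\lozenge(P)$, so $L(D(P)) = 2\,L(\lozenge(P))$; and since the perimeter of a convex figure is invariant under central reflection, $L(\lozenge(P)) = L\bigl(\tfrac12 P + \tfrac12(-P)\bigr)$. The key point is that the perimeter functional is \emph{Minkowski linear} on convex bodies in the plane: $L(K_1 + K_2) = L(K_1) + L(K_2)$ and $L(tK) = t\,L(K)$ for $t \geq 0$. This follows from the integral representation of the perimeter via the support function (Cauchy's formula), $L(K) = \int_0^{2\pi} h_K(\theta)\,d\theta$ in a suitable normalization, together with $h_{K_1+K_2} = h_{K_1} + h_{K_2}$ and $h_{-P}(\theta) = h_P(\theta+\pi)$. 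Hence $L(\lozenge(P)) = \tfrac12 L(P) + \tfrac12 L(-P) = L(P)$, and therefore $L(D(P)) = 2\,L(P)$. I will cite \cite{BoFe1987} or \cite{Gard2006} for Minkowski linearity of the perimeter rather than re-deriving it.

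For the $\Delta^s$ property, fix a side of $D(P)$. The standard description of $D(P) = P + (-P)$ says that each edge of $D(P)$ is a translate either of an edge of $P$ or of an edge of $-P$ (the latter being the reflected edges of $P$); more precisely, edges of $D(P)$ correspond to edges of $P$ together with their reflections, and by central symmetry of $D(P)$ these come in antipodal pairs. So it suffices to treat a side of $D(P)$ that is a translate of a side $AB$ of $P$ (the reflected case being symmetric and giving the antipodal side). Since $P$ has the $\Delta$ property, there is a point $C \in P$ with $d(A,C) = d(B,C) = 1$. The natural candidate rectangle is the one spanned by the four points $\pm(A - C)$ and $\pm(B - C)$, i.e. $K = A - C$, $L = B - C$, $M = C - A$, $N = C - B$ — wait, more carefully: consider the four points $A - C,\ B - C,\ C - A,\ C - B$, all of which lie in $D(P) = P + (-P)$ since $A,B,C \in P$. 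Each has Euclidean norm exactly $1$ because $|A-C| = |B-C| = 1$. I claim $KLMN := (A-C)(B-C)(C-A)(C-B)$ — arranged in the correct cyclic order — is a rectangle: the diagonals are $[A-C,\ C-A]$ and $[B-C,\ C-B]$, which bisect each other at $O$ and have equal length $2$, and a quadrilateral whose diagonals bisect each other and have equal length is a rectangle. Finally $\overrightarrow{KL} = (B-C) - (A-C) = \overrightarrow{AB}$ and similarly for $\overrightarrow{NM}$, matching the chosen side of $D(P)$ up to translation; and all four vertices lie in $D(P)$, which is convex, so the whole rectangle lies in $D(P)$.

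The main obstacle — really the only place requiring care — is the bookkeeping on orientation and cyclic order: one must confirm that $\overrightarrow{KL}$ points along $\overrightarrow{AB}$ with the correct sign and that labelling the four points in the order $A-C,\, B-C,\, C-A,\, C-B$ genuinely traverses the rectangle's boundary (rather than its diagonals), and that this rectangle is associated to the intended side of $D(P)$ and not its antipode. This amounts to checking that $A - C$ and $B - C$ are adjacent vertices of the quadrilateral, which follows from the diagonal characterization above: the diagonals are precisely $[A-C, C-A]$ and $[B-C, C-B]$, so $A-C$ and $B-C$ are \emph{not} diagonally opposite, hence adjacent. For the degenerate possibility that the ``rectangle'' collapses to a segment (when $A$, $B$, $C$ are collinear, forcing $C$ to be the midpoint of $AB$ with $|AB| = 2$), one checks that a degenerate rectangle still satisfies the stated condition, or one simply notes this case causes no difficulty since such a degenerate rectangle is still a subset of $D(P)$ with the required vertex-to-origin distances; alternatively, since $C \in P$ is not required to be a vertex, $P$ being genuinely two-dimensional rules out this degeneracy for at least a small perturbation. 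With these orientation checks in place, the proof is complete.
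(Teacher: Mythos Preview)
Your argument for the $\Delta^s$ property is essentially the paper's: the rectangle with vertices $A-C,\,B-C,\,C-A,\,C-B$ is exactly what the paper obtains after translating so that $C=O$ and noting that $D(\triangle ABC)$ contains $\triangle ABC \cup (-\triangle ABC)$ and hence the rectangle $AB(-A)(-B)$. Your direct verification (diagonals through $O$ of equal length $2$) is slightly slicker than invoking the hexagon $D(\triangle ABC)$.

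For the perimeter you take a genuinely different route. The paper argues combinatorially: assuming $P$ has no pair of parallel sides, every side of $P$ contributes exactly two parallel sides of the same length to $D(P)$, so $L(D(P))=2L(P)$; the general case is then recovered by a limit. Your appeal to Minkowski linearity of the perimeter, $L(K_1+K_2)=L(K_1)+L(K_2)$ and $L(-P)=L(P)$, handles all cases at once without approximation and is the cleaner way to state this well-known fact (already noted in the paper's discussion of the central symmetral).

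One small gap to flag: your sentence ``each edge of $D(P)$ is a translate either of an edge of $P$ or of an edge of $-P$'' is false when $P$ has two parallel sides --- in that case the corresponding edge of $D(P)$ has length equal to the \emph{sum} of the two parallel edge-lengths of $P$, so it is not a translate of either, and your rectangle has the wrong width $|KL|$ for the $\Delta^s$ condition. The paper sidesteps this by first assuming no parallel sides and then passing to the limit; you should either add the same reduction or note that the only later use of the $\Delta^s$ property (in Proposition~\ref{pr.doc2}) already approximates by polygons without parallel sides.
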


\begin{proof}
We may assume that $P$ does not have two parallel sides (the general case could be obtained by passing to the limit).
Let us fix a side $AB$ of $P$. By the $\Delta$ property, there is a point $C\in P$ such that $|AC|=|BC|=1$.
Since $\triangle ABC \subset P$, then $D(\triangle ABC)\subset D(P)$.
On the other hand, it is clear that $D(\triangle ABC)=\triangle ABC +(-\triangle ABC)$ is a centrally symmetric convex hexagon,
which also has a symmetry axis orthogonal to
$AB$. Without loss of generality we may even assume that $C=O$ (if we move the origin, then we get a parallel translation of $D(P)$).
In this case $D(\triangle ABC)$ contains both $\triangle ABC$ and $-\triangle ABC$,
hence the rectangle $AB(-A)(-B)$, and that proves the first assertion.

To prove the second assertion, we note that any side of $D(P)$ is  parallel to some side either of $P$ or of $-P$.
Moreover, for any side $AB$ of $P$ there are exactly two sides of $D(P)$ that are parallel to $AB$ and which have the same length
(here we have used that $P$ has no parallel sides). Hence, $L(D(P))=2\cdot L(D)$. The proposition is proved.
\end{proof}
\smallskip

\begin{figure}[t]
\center{\includegraphics[width=0.7\textwidth, trim=0mm 0mm 0mm 1mm, clip]{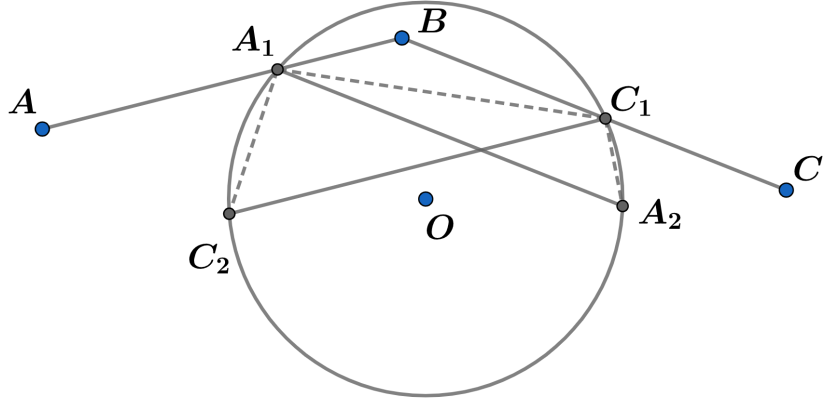} }
\caption{
The key idea in the proof of Theorem \ref{th.centsym}
}
\label{Fig_uniq4}
\end{figure}

The following theorem is very important for the proof of Theorem \ref{th.minim}.

\begin{theorem}\label{th.centsym}
Let $P$ be a centrally symmetric convex polygon with the $\Delta^s$ property and such that any side of it is not longer than $1$. Then
the perimeter $L(P)$ is not less than~$6$.
Moreover, $L(P)=6$ if and only if $P$ is a regular hexagon with side of length $1$.
\end{theorem}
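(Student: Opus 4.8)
The plan is to exploit the $\Delta^s$ property to produce, for each side of $P$, a witnessing rectangle inscribed in $P$ with all four vertices on the unit circle $S(O,1)$; since each such rectangle is inscribed in that circle, its center is $O$, so in fact $P$ contains a genuine rectangle $K_iL_iM_iN_i$ symmetric about $O$ with $\overrightarrow{K_iL_i}$ parallel to the $i$-th side and $|OK_i|=1$. First I would fix a side $A_iA_{i+1}$ of $P$ and let $\theta_i$ be its outer normal direction; the associated rectangle has two sides parallel to $A_iA_{i+1}$ lying at signed distance $\pm h_i$ from $O$ where $h_i = \sqrt{1 - (|A_iA_{i+1}|/2)^2}$ is forced by the constraint that the rectangle's vertices lie on $S(O,1)$ together with $|K_iL_i| \le |A_iA_{i+1}|$ — wait, more carefully, the rectangle has some length $\ell_i \le |A_iA_{i+1}|$ along the side direction and width $2h_i$ with $\ell_i^2/4 + h_i^2 = 1$. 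The point is that $P$, being convex and containing this rectangle, must contain the segment of the support line of $P$ at normal $\theta_i$ only beyond distance $h_i$; equivalently, the support function $h_P(\theta_i) \ge h_i \ge \sqrt{1 - 1/4} = \sqrt{3}/2$ whenever $|A_iA_{i+1}| \le 1$. Thus $P$ contains the regular hexagon's worth of support in six independent normal directions if $P$ has six sides — but the number of sides is not fixed, so this needs to be organized as an integral/summation inequality rather than a case check.

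The cleaner route, which I would actually pursue, is to pass to the polar/support-function description of the perimeter: for a centrally symmetric convex body $K$, $L(K) = \int_0^{2\pi} h_K(\theta)\,d\theta$ is false in general, but $L(K) = \int_0^{2\pi} h_{K}(\theta)\,d\theta$ does hold after the correct normalization — rather, the perimeter equals $\int_0^{2\pi} (h_K(\theta) + h_K''(\theta))\,d\theta = \int_0^{2\pi} h_K(\theta)\,d\theta$ is the wrong identity; the right one is that the perimeter is the integral of the radius of curvature, which for a polygon is atomic. So instead I would argue directly with edges: write the edge vectors of $P$ as $2v_1, \dots, 2v_m$ (going around), so $\sum v_j = 0$ by closure and by central symmetry the edges come in antipodal pairs, giving $L(P) = 2\sum_{j} |v_j|$ over half the edges. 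The $\Delta^s$ condition on the edge with direction $u_j$ (unit) says there is a rectangle in $P$ with that edge direction and circumradius $1$; projecting $P$ onto the line $\mathbb{R}u_j^\perp$ (the normal direction) shows the width of $P$ in direction $u_j^\perp$ is at least $2h_j$ with $h_j \ge \sqrt{3}/2$ given $|{\rm edge}| \le 1$. Hence every direction among the (at least three distinct) edge-normal directions is a direction in which $P$ has width $\ge \sqrt{3}$.

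Now the main inequality: I claim that a centrally symmetric convex polygon all of whose edges have length $\le 1$, and whose width in each edge-normal direction is $\ge \sqrt{3}$, has perimeter $\ge 6$, with equality only for the regular hexagon. The key estimate is that the perimeter of a centrally symmetric convex body dominates $2\cdot(\text{width in direction } \theta)$ for every $\theta$ — actually $L(K) \ge \pi \cdot w(K)$ where $w(K)$ is the minimal width, which would only give $L(P) \ge \pi\sqrt{3} \approx 5.44$, not enough. So the minimal-width bound alone is too weak; I must use that the width is $\ge \sqrt{3}$ in \emph{many} directions, namely all edge normals, and combine this with the edge-length cap $\le 1$. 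The sharp tool is: for each edge $e_j$ of length $\ell_j \le 1$ with outward normal $n_j$, one has $h_P(n_j) + h_P(-n_j) = w(n_j) \ge \sqrt{3}$, while $h_P(n_j)$ relates to the fact that the edge $e_j$ sits on the support line at distance $h_P(n_j)$ and the \emph{next} edges turn by the exterior angles. Summing the identity $L(P) = \sum_j \ell_j$ against the constraint $\sum_j \ell_j n_j = 0$ and the turning-angle relation $\sum_j (\text{ext. angle at }j) = 2\pi$, and using $w(n_j) \ge \sqrt{3}$, I expect to recover $L(P) \ge 6$ by a convexity (Jensen) argument in which the regular hexagon is the unique equalizer because equality in $\ell_j \le 1$ and in $w(n_j) = \sqrt{3}$ forces $\ell_j = 1$ and $h_P(n_j) = \sqrt{3}/2$ for all $j$, which pins down six edges at $60^\circ$ turns.

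The step I expect to be the genuine obstacle is making the last paragraph rigorous without circular reasoning: turning "each edge normal is a wide direction" plus "each edge is short" into the clean bound $L(P)\ge 6$. One honest way is the following two-line geometric packing argument, which I would try first: the rectangle $R_j$ furnished by the $\Delta^s$ property for edge $e_j$ has perimeter $2\ell_j + 4h_j$ with $\ell_j^2/4 + h_j^2 = 1$; since $R_j \subset P$ and $P$ is convex and centrally symmetric, $P$ contains the convex hull of all the $R_j$, which is a centrally symmetric polygon each of whose edge directions occurs among the edge directions of the $R_j$; one then shows $\operatorname{co}(\bigcup_j R_j)$ already has perimeter $\ge 6$, reducing to the case where \emph{every} edge of $P$ actually carries one of these unit-circumradius rectangles. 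In that reduced case, order the edge directions $u_1, \dots, u_m$ cyclically with exterior angles $\beta_1, \dots, \beta_m$ summing to $2\pi$ (using central symmetry, suffices to sum over a half-turn: $\beta_1 + \cdots + \beta_{m/2} = \pi$), and observe that consecutive support lines at distance $h_j$ and $h_{j+1}$ from $O$ meeting at exterior angle $\beta_j$ force the edge length $\ell_{j+1}$ to satisfy $\ell_{j+1} \ge$ (something like) $\big(h_j \tan(\beta_{j}/2) + h_{j+1}\tan(\beta_{j+1}/2)\big)$ — at which point $\sum \ell_j \ge \sum h_j(\tan(\beta_{j-1}/2) + \tan(\beta_j/2))$, and with $h_j \ge \sqrt{3}/2$ and $\sum \beta_j = 2\pi$, convexity of $\tan$ on $[0,\pi/2)$ gives the minimum exactly when all $\beta_j$ are equal and the body is the regular hexagon, yielding $L(P) \ge 6\cdot \frac{\sqrt{3}}{2}\cdot\frac{2}{\sqrt 3} = 6$. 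I would budget most of the write-up for verifying this support-line/exterior-angle inequality and the equality analysis; the rest is bookkeeping justified by Proposition~\ref{monotper} and the already-established properties of difference bodies.
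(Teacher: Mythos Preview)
Your final step has a genuine gap. The Jensen/tangent argument you land on uses only the consequence $h_P(n_j)\ge\sqrt{3}/2$ of the $\Delta^s$ property. Granting the (correct) identity $L(P)=\sum_j (h_j+h_{j+1})\tan(\beta_j/2)$ with $\sum_j\beta_j=2\pi$, convexity of $\tan$ yields
\[
L(P)\ \ge\ \sqrt{3}\sum_j\tan(\beta_j/2)\ \ge\ \sqrt{3}\,N\tan(\pi/N),
\]
where $N$ is the number of sides. This equals $6$ only for $N=6$; for $N=8$ it gives $8\sqrt{3}(\sqrt{2}-1)\approx 5.74$, and it tends to $\sqrt{3}\,\pi\approx 5.44$ as $N\to\infty$. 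Concretely, the regular $N$-gon circumscribed about the circle of radius $\sqrt{3}/2$ has all $h_j=\sqrt{3}/2$, all edge lengths $\sqrt{3}\tan(\pi/N)<1$ for $N>6$, and perimeter $<6$. So ``width $\ge\sqrt{3}$ in every edge-normal direction'' together with ``every edge $\le 1$'' is strictly weaker than $\Delta^s$ and does not force $L\ge 6$; your argument cannot succeed without extracting more from the hypothesis than the support-distance bound.

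The paper's proof uses $\Delta^s$ differently. It studies the finite set $\mathcal{M}=\bd(P)\cap S(O,1)$ and proves the key lemma your sketch is missing: any two \emph{consecutive} points of $\mathcal{M}$ along $\bd(P)$ are at Euclidean distance $\le 1$. This is where the full $\Delta^s$ property enters --- if a vertex of $P$ lies strictly inside $B(O,1)$, a short chord-comparison using the inscribed rectangles for the two incident edges forces the two flanking intersection points on $S$ to be within distance $1$. Once every arc of $S$ between consecutive points of $\mathcal{M}$ has length $\le\pi/3$, one compares each piece of $\bd(P)$ to the corresponding arc: pieces outside $B(O,1)$ are at least as long as the arc, while pieces inside are chords (or two chords) of length $\ge(3/\pi)\cdot(\text{arc length})$. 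Summing gives $L(P)\ge(3/\pi)\cdot 2\pi=6$, with equality precisely for the unit regular hexagon. Your alternative route via $Q=\co\bigl(\bigcup_j R_j\bigr)$ is close in spirit --- its vertices do lie on $S(O,1)$ --- but to conclude $L(Q)\ge 6$ you would still need exactly that ``consecutive points on $S$ are within distance $1$'' lemma, and that is the step your outline does not supply.
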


\begin{proof}
We denote the the circle $S(O,1)$ by $S$ for brevity. Suppose that a side $AB$ of $P$ is in the unit ball $B(O,1)$. Then, by the $\Delta^s$ property,
both vertices $A$ and $B$ are in $S$.
Indeed, if at least one of the points $A$ and $B$ is in the interior of $B(O,1)$, then the intersection of the straight line $AB$ with $B(O,1)$
has length larger than $|AB|$.
By the $\Delta^s$ property, $P$ contains a rectangle $KLMN$ such that
$\overrightarrow{KL}=\overrightarrow{NM}=\overrightarrow{AB}$ and the distance from $O$ to any vertex of $KLMN$ is $1$.
Hence, $K,L,M,N \in S \cap P$. Therefore, the line $AB$ should be somewhere strictly between the straight lines $KL$ and $NM$.
On the other hand, the straight line $AB$ is a supporting line for $P$, what is impossible.

Now, let us suppose that a vertex $B$ is in the interior of $B(O,1)$. Then the vertices $A$ and $C$ of $P$, that are adjacent to $B$, could not lie in
$B(O,1)$ by the same arguments as above. Denote by $A_1$ and $C_1$ the intersections of $S$ with the segments $[A,B]$ and $[B,C]$, respectively.
Let us prove that $|A_1C_1|< 1$.
Let us consider points $A_2,C_2 \in S$ such that the straight line $A_1A_2$ is parallel to $BC$ and the straight line $C_1C_2$ is parallel to $BA$,
see Fig.~\ref{Fig_uniq4}. By the $\Delta^s$ property, $|A_1A_2|\leq |BC|\leq 1$ as well as  $|C_1C_2|\leq |AB|\leq 1$
(if, say, $|C_1C_2|>|AB|$, then it is impossible to choose points $K,L,M,N \in S\cap P$ such that $\overrightarrow{KL}=\overrightarrow{NM}=\overrightarrow{AB}$).
Since $\angle A_1C_1 A_2 >\pi/2$, then $|A_1C_1| < |A_1A_2|\leq 1$.

Now, let us consider $\mathcal{M}$, the intersection of $S$ with the boundary $\operatorname{bd} (P)$. Is is clear that $\mathcal{M}$ is a finite set.
Note that $\mathcal{M}$ could be empty. In this case $S$ is a subset of $P$ by the above arguments.
Then $L(P) \geq L(B(O,1))=2\pi >6$ by Proposition \ref{monotper}. In what follows we assume that $\mathcal{M}\neq \emptyset$.

Let $m$ be the cardinality of $\mathcal{M}$. Then we enumerate element of $\mathcal{M}$ by $M_i$, $i=1,2,\dots,m$ in such a way that
the parts of $\operatorname{bd} (P)$ between $M_{l}$ and $M_{l+1}$, $l=1,\dots,m$ (we assume that $M_{m+1}:=M_1$), have no common point with $S$.

Let us consider $I=\{i \in \mathbb{N}\,|\, 1\leq i \leq m\}$ and $I_1, I_2\subset I$ which are defined as follows: $i \in I_1$ ($i \in I_2$) if and only if
the part of $\operatorname{bd} (P)$ between $M_{i}$ and $M_{i+1}$ is situated in $B(O,1)$ (respectively, is situated outside of $B(O,1)$).
It is clear that $I_1 \cup I_2=I$.

Now, denote by $l_i$, $i=1,2,\dots,m$, the length of the part of $\operatorname{bd} (P)$ between the points $M_i$ and $M_{i+1}$.
Further, denote by $\lambda_i$, $i=1,2,\dots,m$, the length of the (smaller) arc of $S$ between the points $M_i$ and $M_{i+1}$.

For all $i\in I_1$, from the above reasoning, we know that the part of $\operatorname{bd} (P)$ between the points $M_i$ and $M_{i+1}$
is either a part of one side of $P$ or the union of parts of two adjacent to each other sides of $P$. Moreover, $|M_iM_{i+1}|\leq 1$ and
$|M_iM_{i+1}|=1$ only if both $M_i$ and $M_{i+1}$ are the endpoints of a side of $P$ with length $1$. It is easy to see that
$l_i \leq |M_iM_{i+1}| <\lambda_i$, but $l_i\geq \frac{3}{\pi} \,\lambda_i$ due to $|M_iM_{i+1}|\leq 1$.

For any $i\in I_2$, the convex hull of the part of $\operatorname{bd} (P)$ between the points $M_i$ and $M_{i+1}$
contains the arc of $S$ between the points $M_i$ and $M_{i+1}$. By Proposition \ref{monotper}, we get
$|M_iM_{i+1}|+l_i \geq |M_iM_{i+1}|+\lambda_i$, implying $l_i \geq \lambda_i$ (it is clear that equality is impossible in this case).

Therefore, we have
\begin{eqnarray*}
L(P)=\sum_{i\in I_1} l_i+ \sum_{i\in I_2} l_i \geq \frac{3}{\pi} \sum_{i\in I_1} \lambda_i+
\sum_{i\in I_2} \lambda_i
\geq  \frac{3}{\pi} \sum_{i\in I} \lambda_i= \frac{3}{\pi} \cdot 2\pi =6.
\end{eqnarray*}
If $L(P)=6$, then $I_2=\emptyset$ and every line segment $[M_i,M_{i+1}]$ is a side of $P$ with length~$1$.
Therefore, $P$ is a regular hexagon with side of length $1$.
\end{proof}

It should be noted that a small modification of the arguments used in the proof of Theorem \ref{th.centsym} imply the following result.

\begin{prop}\label{pr.centsym}
If a sequence of a centrally-symmetric convex polygon  $\left\{P_{n}\right\}$, $n\in \mathbb{N}$, with the $\Delta^s$ property and with all sides not longer than $1$,
is such that $L(P_n) \rightarrow 6$ as $n \rightarrow \infty$, then there is a subsequence of this sequence that converges to
a regular hexagon with side of length $1$ in the Hausdorff metric.
\end{prop}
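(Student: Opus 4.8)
The plan is to rerun the compactness machine attached to the proof of Theorem~\ref{th.centsym}, replacing the single lower bound by a stability statement. First I would recall that the space of convex figures contained in a fixed large ball, equipped with the Hausdorff metric, is compact (Blaschke selection theorem). Each $P_n$ in the hypothesis is centrally symmetric with $O$ as its center; since every side has length $\le 1$ and there is a rectangle of ``radius'' $1$ attached to each side, one checks that the $P_n$ are uniformly bounded (they lie in a fixed disk) and contain a fixed small disk around $O$, so by Blaschke selection some subsequence $P_{n_k}$ converges in the Hausdorff metric to a compact convex set $P_\infty$. Central symmetry about $O$ and convexity pass to the Hausdorff limit, so $P_\infty$ is a centrally symmetric convex figure. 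Perimeter is continuous on convex figures in the Hausdorff metric (this is classical; it follows, e.g., from Proposition~\ref{monotper} together with an approximation argument, or from the mixed-volume/Steiner formula), hence $L(P_\infty)=\lim_k L(P_{n_k})=6$.

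Next I would argue that $P_\infty$ is actually a polygon and still enjoys the $\Delta^s$ property, or at least a version of it strong enough to invoke the rigidity part of Theorem~\ref{th.centsym}. The subtlety is that the limit of polygons need not be a polygon, and the number of sides of $P_n$ is not assumed bounded; so the cleanest route is to bypass ``polygon'' entirely and redo the estimate of Theorem~\ref{th.centsym} directly for $P_\infty$. Concretely: the condition ``every side has length $\le 1$'' combined with the inscribed-rectangle condition forces, in the limit, the following property of $P_\infty$: for every boundary point $x$ of $P_\infty$ lying strictly inside $B(O,1)$, a short boundary arc near $x$ has a chord of length $<1$ whose endpoints, when pushed out to $S=S(O,1)$ along the two boundary directions, subtend a chord of length $<1$ on $S$ --- exactly the ``$|A_1C_1|<1$'' estimate in the proof of Theorem~\ref{th.centsym}. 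Carrying the set $\mathcal{M}=S\cap\bd(P_\infty)$ and the arc/chord comparison through verbatim (now $\mathcal{M}$ may be infinite, but the sum over arcs becomes an integral and the same inequality $l_i\ge\frac{3}{\pi}\lambda_i$ on the ``inside'' parts and $l_i\ge\lambda_i$ on the ``outside'' parts holds arc-by-arc), one gets $L(P_\infty)\ge\frac{3}{\pi}\cdot 2\pi=6$ with equality forcing: no ``outside'' arcs ($I_2=\emptyset$), and each ``inside'' arc of $S$ of length $\lambda$ is subtended by a boundary piece of length exactly $\frac{3}{\pi}\lambda$ which (by the equality discussion in Theorem~\ref{th.centsym}) is a straight segment of length $1$ with both endpoints on $S$. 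Since $L(P_\infty)=6$, equality holds everywhere, so $\bd(P_\infty)$ is a union of unit segments with endpoints on $S$, whose total turning is $2\pi$; central symmetry then pins it down to be a regular hexagon with unit side.

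Having shown $P_\infty$ is the regular hexagon with side $1$, I conclude that the chosen subsequence $P_{n_k}$ converges to a regular hexagon with side of length $1$ in the Hausdorff metric, which is exactly the assertion of Proposition~\ref{pr.centsym}.

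I expect the main obstacle to be the passage from the finite, discrete computation in Theorem~\ref{th.centsym} (sum over the finitely many points of $\mathcal{M}=S\cap\bd(P)$) to its limiting form for $P_\infty$, where $S\cap\bd(P_\infty)$ may be an infinite closed set: one must show that the ``inside'' portion of $\bd(P_\infty)$ still decomposes into arcs to which the $l_i\ge\frac{3}{\pi}\lambda_i$ bound applies, and that the local chord estimate $|A_1C_1|<1$ survives the Hausdorff limit of the inscribed rectangles. An alternative that sidesteps this is to keep everything at the level of the $P_n$: fix $\varepsilon>0$, and show that for $n$ large the perimeter estimate in Theorem~\ref{th.centsym}, applied to $P_n$, has a quantitative deficit --- i.e.\ if $P_n$ is $\varepsilon$-far (in Hausdorff distance) from every regular hexagon of side $1$, then $L(P_n)\ge 6+\delta(\varepsilon)$ for some $\delta(\varepsilon)>0$ --- which contradicts $L(P_n)\to 6$. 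Proving such a quantitative deficit amounts to tracking, in the proof of Theorem~\ref{th.centsym}, how much is lost in each inequality ($I_2\ne\emptyset$, segments shorter than $1$, endpoints off $S$, total turning off $2\pi$) when $P_n$ is bounded away from the extremal hexagon; this is routine but slightly tedious, and either route gives the result.
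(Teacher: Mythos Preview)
Your proposal is sound, and your \emph{alternative} route---extracting a quantitative deficit from the proof of Theorem~\ref{th.centsym} at the level of each $P_n$---is essentially the paper's approach, though the paper pinpoints the deficit more sharply. Rather than trying to pass the $\Delta^s$ property to the Hausdorff limit (your primary route), the paper re-reads the inequality chain of Theorem~\ref{th.centsym} for each $P_n$ and isolates the single slack term
\[
\kappa_n=\Bigl(1-\tfrac{3}{\pi}\Bigr)\sum_{i\in I_2^n}\lambda_i^n,
\]
so that $L(P_n)\ge 6+\kappa_n$. From $L(P_n)\to 6$ one gets $\sum_{i\in I_2^n}\lambda_i^n\to 0$, which forces the limit $Q$ to lie entirely in $B(O,1)$. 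The paper then notes that the property ``consecutive points of $\bd(P_n)\cap S$ are at chordal distance $\le 1$'' survives in $Q$, and finishes with a short cardinality argument on $\mathcal{L}=\bd(Q)\cap S$: if $|\mathcal{L}|\ge 7$ the inscribed polygon already has perimeter $>6$ (since each chord $\le 1$ forces each central angle $\le \pi/3$, and the chord--arc inequality $2\sin(\theta/2)\ge \tfrac{3}{\pi}\theta$ is strict unless every angle equals $\pi/3$), while $|\mathcal{L}|\le 6$ together with the distance constraint forces exactly six points forming the regular hexagon.

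Compared with your primary route, this neatly sidesteps every obstacle you correctly flagged: there is no need to make sense of the $\Delta^s$ property for a non-polygonal limit, no possibly infinite $\mathcal{M}$, and no inscribed-rectangle compactness argument. Compared with your alternative, it is the same idea but with the deficit identified explicitly as the $I_2$-contribution, so that the only consequence one needs to transport to the limit is the inclusion $Q\subset B(O,1)$ plus the elementary ``adjacent intersections at distance $\le 1$'' fact. Your approach would work, but carries more overhead than necessary.
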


\begin{proof}
Since the space of all convex closed subsets of any ball $B(x,r) \subset \mathbb{R}^2$ is compact with respect to the Hausdorff metric,
me may assume that $P_n \rightarrow Q$ as $n \rightarrow \infty$ in this metric (see e.~g. \cite[4.3.2]{Had1957}),
where $Q$ is some centrally symmetric figure in $\mathbb{R}^2$.
Let us supply all characteristics of $P$ in the proof of Theorem \ref{th.centsym} by the index $n$ in order to consider the corresponding characteristics of
the polygon $P_n$.
We know that
\begin{eqnarray*}
L(P_n)=\sum_{i\in I_1^n} l_i^n+ \sum_{i\in I_2^n} l_i^n \geq \frac{3}{\pi} \sum_{i\in I_1^n} \lambda_i^n+
\sum_{i\in I_2^n} \lambda_i^n
\geq  \frac{3}{\pi} \sum_{i\in I^n} \lambda_i^n+\kappa_n
=6+\kappa_n,
\end{eqnarray*}
where $\kappa_n=\left(1- \frac{3}{\pi}\right)\sum_{i\in I_2^n} \lambda_i^n$.
Since $L(P_n) \rightarrow 6$ as $n \rightarrow \infty$, then $\sum_{i\in I_2^n} \lambda_i^n \rightarrow 0$.
This means that  we get $Q \subset B(O,1)$ after passing to the limit.

For any $P_n$, the distance between every adjacent points of intersection of the boundary $\bd (P_n)$ with the circle $S=S(O,1)$ does not exceed $1$.
Hence, the boundary of $Q$ has the same property. Let us consider $\mathcal{L}=  \bd (Q) \cap S$.
Since $\co (\mathcal{L}) \subset Q$, then we have the inequality $L(Q)\geq L(\operatorname{co} (\mathcal{L}))$ for their perimeters.

If the cardinality of $\mathcal{L}$ is $\geq 7$,
then we can choose a finite sequence $C_1,\dots, C_k, C_{k+1}=C_1$ of points in $\mathcal{L}$ such that $|C_iC_{i+1}|\leq 1$, $i=1,\dots,k$, $k\geq 7$.
It is easy to see that $L(Q)\geq \sum_{i=1}^{k} |C_iC_{i+1}| >6$, which is impossible. If the cardinality of  $\mathcal{L}$ is $\leq 6$,
then it is exactly $6$ and the points of $\mathcal{L}$ are the vertices of a regular hexagon with unit side. Therefore, $L(Q)\geq 6$ with $L(Q)=6$
if $Q$ is the convex hull of $\mathcal{L}$.
\end{proof}

\begin{prop}\label{pr.proreghex}
Suppose that a polygon $P$ is such that
its difference body $D(P)=P+(-P)$ is a regular hexagon with unit side.
Then $P$ is obtained from a regular triangle with side of length $1+a$, $a\in [0,1/2]$, by cutting off
three congruent triangles with sides of length $a$ from its corners. If $a\neq 0$, then such $P$ has not the $\Delta$ property.
\end{prop}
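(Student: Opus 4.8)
The plan is to split the argument into a structural part — showing that $P$ must be a corner‑truncated equilateral triangle — and a $\Delta$‑part, ruling out $a\neq 0$. For the structure, I would work with edge directions and lengths and use that the edge data of $D(P)=P+(-P)$ is obtained from that of $P$ and $-P$. Normalise so that $H:=D(P)$, the regular hexagon of unit side, has outward edge normals $u_k=(\cos(60^{\circ}k),\sin(60^{\circ}k))$, $k=0,\dots,5$; note $u_{k+3}=-u_k$ and $u_0+u_2=u_1$, so that (up to scalars) $u_0-u_1+u_2=0$ is the only linear dependence among $u_0,u_1,u_2$. If $P$ had an edge with outward normal $u\notin\{u_0,\dots,u_5\}$, then $D(P)$ would have an edge with that normal as well (it contains a translate of that edge of $P$), contradicting $D(P)=H$; hence every edge of $P$ is parallel to an edge of $H$. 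Let $\ell_k\geq 0$ be the length of the edge of $P$ with outward normal $u_k$ (taking $\ell_k=0$ if there is none). The edge vectors of $P$ close up, which gives $\sum_{k=0}^{5}\ell_k u_k=0$. Moreover the face of $D(P)$ in direction $u_k$ is the Minkowski sum of the faces of $P$ and of $-P$ in that direction, i.e.\ a segment of length $\ell_k+\ell_{k+3}$; since $H$ has six unit edges, $\ell_k+\ell_{k+3}=1$ for every $k$.

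Next I would solve for the $\ell_k$. Substituting $\ell_{k+3}=1-\ell_k$ into $\sum\ell_k u_k=0$ and using $u_{k+3}=-u_k$ yields $(2\ell_0-1)u_0+(2\ell_1-1)u_1+(2\ell_2-1)u_2=0$, so $(2\ell_0-1,2\ell_1-1,2\ell_2-1)$ is a multiple of $(1,-1,1)$. Hence $\ell_0=\ell_2=\ell_4$, $\ell_1=\ell_3=\ell_5$, and $\ell_0+\ell_1=1$; setting $a:=\min\{\ell_0,\ell_1\}\in[0,1/2]$ and relabelling if necessary, $P$ has three edges of length $1-a$ with normals $u_0,u_2,u_4$ and three of length $a$ with normals $u_1,u_3,u_5$. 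The lines carrying the three long edges have normals $120^{\circ}$ apart, so they bound an equilateral triangle $T\supseteq P$, and at each vertex of $T$ a corner is cut off by a short edge $AB$ of $P$; that cut‑off triangle has a $60^{\circ}$ angle at the vertex of $T$ and (since consecutive edges of $P$ have normals $60^{\circ}$ apart, i.e.\ interior angle $120^{\circ}$) a $180^{\circ}-120^{\circ}=60^{\circ}$ angle at each of $A$ and $B$, hence is equilateral of side $a$. Therefore a side of $T$ has length $a+(1-a)+a=1+a$, which is exactly the asserted description; $a=0$ gives the unit equilateral triangle and $a=1/2$ the regular hexagon of side $1/2$. (Conversely each such $P$ does satisfy $D(P)=H$, by the same edge computation, confirming the family is exactly right.)

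For the $\Delta$‑part, assume $a\neq 0$ and let $AB$ be one of the short sides, so $|AB|=a\in(0,1/2]$. Any $C$ with $|AC|=|BC|=1$ lies on the perpendicular bisector $m$ of $AB$ at distance $|MC|=\sqrt{1-a^{2}/4}$ from the midpoint $M$ of $AB$, and $|MC|^{2}=1-a^{2}/4\geq 15/16>3/4$. On the other hand, the line of $AB$ lies in a side of $T$, the opposite side of $T$ is a supporting line of $P$ parallel to it, and their distance equals $(1+a)\tfrac{\sqrt3}{2}-a\tfrac{\sqrt3}{2}=\tfrac{\sqrt3}{2}$ (the height of $T$ minus the distance from the cut‑off vertex to $AB$). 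Since $P$ lies on the inner side of the line of $AB$ and inside $T$, it lies in the closed slab of width $\tfrac{\sqrt3}{2}$ between these two parallel lines; as $m$ meets that slab perpendicularly with $M$ on one of its boundary lines, every $C\in P\cap m$ satisfies $|MC|\leq\tfrac{\sqrt3}{2}<\sqrt{1-a^{2}/4}$. Hence $AB$ has no strut and $P$ fails the $\Delta$‑property.

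The main obstacle is the structural step: carrying out the Minkowski‑sum bookkeeping cleanly enough to pin down the edge data $\ell_0=\ell_2=\ell_4$, $\ell_1=\ell_3=\ell_5$, $\ell_0+\ell_1=1$, and then correctly reading off the geometric picture "equilateral triangle of side $1+a$ with three equilateral corners of side $a$ cut off", including the degenerate boundary cases $a=0$ and $a=1/2$; once the edge data is available, the failure of the $\Delta$‑property is a one‑line estimate using the perpendicular bisector and the slab containing $P$.
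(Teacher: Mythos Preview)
Your proof is correct, and the $\Delta$-part is essentially identical to the paper's: both compute that the width of $P$ perpendicular to a short side is exactly $\sqrt{3}/2<\sqrt{1-a^2/4}$ for $a\in(0,1/2]$, so no strut exists. The structural part, however, is organised differently. The paper starts geometrically: it takes the minimal equilateral triangle $T$ with sides parallel to those of $D(P)$ containing $P$, then uses that the width of $D(P)$ in any direction is twice that of $P$ (so the distance between lines through parallel edges of $P$ is $\sqrt{3}/2$) to locate the three cut lines and read off the side lengths $a$ and $1-a$. You instead work combinatorially with outward normals and edge lengths, using the two identities $\ell_k+\ell_{k+3}=1$ (from the face decomposition of a Minkowski sum) and $\sum_k \ell_k u_k=0$ (edge closure) to force $\ell_0=\ell_2=\ell_4$, $\ell_1=\ell_3=\ell_5$, $\ell_0+\ell_1=1$, and only afterwards rebuild the triangle $T$. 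Your route makes explicit a step the paper passes over quickly (why every edge of $P$ must be parallel to an edge of $H$) and would adapt cleanly to other prescribed difference bodies; the paper's route is a bit shorter once that step is granted.

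One wording slip to fix: in the $\Delta$-part you write ``the line of $AB$ lies in a side of $T$'', but $AB$ is a short edge near a vertex of $T$, not on a side of $T$; your own parenthetical (height of $T$ minus the distance from the cut-off vertex to $AB$) and the computation $(1+a)\tfrac{\sqrt3}{2}-a\tfrac{\sqrt3}{2}=\tfrac{\sqrt3}{2}$ show you have the correct picture, so this is only a phrasing error.
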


\begin{proof}
Let us consider the minimal (say, by area) regular triangle $T$ with sides parallel to sides of $D(P)$ that contains the polygon $P$.
Let $1+a$ be the length of any side of $T$ (it is clear that $a\geq 0$).
Since every side of $P$ is parallel to some side of $D(P)$, we see that on any side of $T$, there is a side of $P$.
Since the distance between straight lines through two parallel sides of $D(P)$ is $\sqrt{3}$, then the distance between straight lines through two parallel sides of $P$
is $\sqrt{3}/2$. Hence, any straight line through a side of $P$, that is not on a part of a side of $T$, should cut a regular triangle with sides of length $a$
from a corner of $T$.
Hence, $P$ has three pairs of parallel sides with lengths $a$ and $1-a$. By construction of $T$ we have $a\leq 1-a$ (otherwise we can find a triangle $T'$
with sides parallel to sides of $D(P)$ that is smaller than $T$). Therefore, $a\in [0,1/2]$. For $a=0$ we obtain a regular triangle with unit side.
On the other hand, $D(P)$ is a hexagon for $a\in (0,1/2]$.

Finally, it is easy to see that $P$ has not the $\Delta$ property for $a\in (0,1/2]$. Indeed, if $AB$ is a side of $P$ with $|AB|=a$, then
there is no $C\in P$, such that $|AC|=|BC|=1$. Otherwise the distance between the line $AB$ and the supporting line to $P$, which is parallel to $AB$,
is at least $\sqrt{1-(a/2)^2}$. On the other hand, this distance is $\sqrt{3}/2$, hence, $\sqrt{3}/2 \geq \sqrt{1-(a/2)^2}$, that implies $a\geq 1$.
This contradiction proves the second assertion of the proposition.
\end{proof}

\section{One problem on special pentagons}\label{sect.2}

In this section we consider some properties of special convex pentagons in the Euclidean plane.
The main result of this section is the following.

\begin{theorem}\label{th.perpent}
Let $P$ be a convex pentagon in the Euclidean plane with the consecutive vertices $A, B, C, E, F$ such that $|AE|=|BE|=|BF|=|CF|=1$ and $|AB|+|BC|\geq 1$.
Then the perimeter $L(P)$ of this pentagon is not less than $3$.
\end{theorem}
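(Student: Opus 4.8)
\emph{Strategy.} I would coordinatise the pentagon so that $L(P)$ becomes an explicit function of a few angular parameters, kill most of the parameter range by the monotonicity of the perimeter (Proposition~\ref{monotper}), and settle the small remaining region by a direct analysis. Since $|AE|=|BE|=1$, the triangle $ABE$ is isosceles; write $\beta=\angle EAB=\angle EBA\in(0,\pi/2)$, so $|AB|=2\cos\beta$, and similarly $\gamma=\angle FCB=\angle FBC\in(0,\pi/2)$, so $|BC|=2\cos\gamma$. As $P$ is convex with the cyclic order $A,B,C,E,F$, the rays $BC,BE,BF,BA$ leave $B$ in this angular order; hence, setting $\psi=\angle EBF$, we have $\angle CBE=\gamma-\psi$, $\angle ABF=\beta-\psi$, $\angle ABC=\beta+\gamma-\psi$, so $\psi\in(0,\min\{\beta,\gamma\})$. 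The law of cosines in $\triangle BCE$, $\triangle ABF$, $\triangle BEF$ gives
\[
|CE|=\sqrt{1+4\cos^2\gamma-4\cos\gamma\cos(\gamma-\psi)},\qquad
|FA|=\sqrt{1+4\cos^2\beta-4\cos\beta\cos(\beta-\psi)},
\]
\[
|EF|=2\sin(\psi/2).
\]
Thus $L(P)=\Phi(\beta,\gamma,\psi)$, where $\Phi:=2\cos\beta+2\cos\gamma+|CE|+|EF|+|FA|$, and it suffices to prove $\Phi\ge3$ for $\beta,\gamma\in(0,\pi/2)$, $\psi\in(0,\min\{\beta,\gamma\})$, $2\cos\beta+2\cos\gamma\ge1$ (the extra convexity constraints on $P$ only shrink this box).

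\emph{Elementary reductions.} The vertices $A,B,E$ span a triangle $\triangle ABE\subset P$, so by Proposition~\ref{monotper} $L(P)\ge|AB|+|AE|+|BE|=|AB|+2$; likewise $\triangle BCF\subset P$ gives $L(P)\ge|BC|+2$, and $\triangle BEF\subset P$ gives $L(P)\ge|EF|+2$. Hence we may assume $|AB|<1$, $|BC|<1$, $|EF|<1$, i.e.\ $\beta,\gamma\in(\pi/3,\pi/2)$ and $\psi\in(0,\pi/3)$. In this range the triangle inequality in $\triangle BCE$ gives $|CE|\ge|BE|-|BC|=1-2\cos\gamma>0$, and in $\triangle ABF$ gives $|FA|\ge1-2\cos\beta>0$. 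So the bound already holds at the two ends of the $\psi$-interval: as $\psi\to0^{+}$ (the pentagon degenerating, $E\to F$) one has $|CE|,|FA|\to1$, so $\Phi\to2\cos\beta+2\cos\gamma+2\ge3$; and at $\psi=\pi/3$, where $|EF|=1$,
\[
\Phi\ge 2\cos\beta+2\cos\gamma+(1-2\cos\gamma)+1+(1-2\cos\beta)=3.
\]

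\emph{The remaining region, and the main obstacle.} It remains to show $\Phi(\beta,\gamma,\psi)\ge3$ for $\beta,\gamma\in(\pi/3,\pi/2)$, $\psi\in(0,\pi/3)$. This cannot be done by combining the evident triangle inequalities, which only force $L(P)\ge\tfrac52$; the exact law-of-cosines expressions must be used. I would fix $\beta,\gamma$ and analyse $\psi\mapsto\Phi$ on $[0,\pi/3]$: the summand $2\sin(\psi/2)$ is concave, but $|CE|$ and $|FA|$ pass from concave to convex on this interval, so $\Phi$ is in general neither monotone nor concave. The plan is to show that $\Phi(\beta,\gamma,\cdot)$ has at most one interior critical point and that, at such a point, the stationarity relation $\partial\Phi/\partial\psi=0$ lets one eliminate $\psi$ and reduce $\Phi$ to an expression in $\beta,\gamma$ alone which is $\ge3$, using $2\cos\beta+2\cos\gamma\ge1$ together with the reserves $|CE|\ge1-2\cos\gamma$ and $|FA|\ge1-2\cos\beta$; combined with the endpoint estimates above this yields $\Phi\ge3$, hence $L(P)\ge3$. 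The hard part is exactly this last step. The interior stationary point is either a local maximum — then the minimum of $\Phi$ lies at an endpoint and is $\ge3$ — or a local minimum, and then one must verify its value is still $\ge3$; the tight configurations sit near the locus $2\cos\beta+2\cos\gamma=1$, where the hypothesis $|AB|+|BC|\ge1$ leaves essentially no slack, which is what makes the estimate intricate.
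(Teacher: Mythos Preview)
Your parametrisation, the formulas for the side lengths, and the elementary reductions to $\beta,\gamma\in(\pi/3,\pi/2)$, $\psi\in(0,\pi/3)$ are all correct and essentially coincide with the paper's set-up (your $\beta,\gamma,\psi$ are the paper's $\alpha,\beta,\theta$). The endpoint checks at $\psi\to 0^{+}$ and $\psi=\pi/3$ are fine.

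The genuine gap is the ``remaining region'' step, which you explicitly leave as a plan. The tools you propose there are not strong enough. Concretely, there is a \emph{non-degenerate} pentagon with $L(P)=3$ sitting in the interior of your $\psi$-interval and on the constraint boundary $|AB|+|BC|=1$: take $\beta=\gamma=\arccos(1/4)$ and $\psi=\pi-2\arccos(1/4)\approx 0.505$, for which $|AB|=|BC|=|EF|=\tfrac12$ and $|CE|=|FA|=\tfrac34$. At this point your ``reserves'' $|CE|\ge 1-2\cos\gamma$ and $|FA|\ge 1-2\cos\beta$ give only $|CE|,|FA|\ge\tfrac12$, hence $\Phi\ge\tfrac52$, a full $\tfrac12$ short; combining them with the single stationarity relation $\partial\Phi/\partial\psi=0$ cannot recover this loss. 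Any valid argument must be exactly tight at this interior configuration, which rules out the soft inequalities you invoke. Your claim that $\psi\mapsto\Phi$ has at most one interior critical point is also not established, and the paper's analysis (which finds several spurious critical points that must be discarded) suggests it is not obvious.

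For comparison, the paper does not fix $\beta,\gamma$ and vary $\psi$; it treats the full three-variable critical-point problem (with a Lagrange multiplier on $\cos\alpha+\cos\beta=\tfrac12$), first solving $\partial\mathcal{B}/\partial\alpha=0$ explicitly to get a curve $\theta=g(\alpha)$, then reducing to either $\alpha=\beta$ or the constraint boundary, and finally clearing both cases by polynomial elimination (Gr\"obner-basis computations leading to factors up to degree~$104$) together with numerical root isolation. The discovery of the special pentagon above is a by-product of this computation. So the heart of the proof is precisely the step you have not carried out, and it appears to require substantially more than the one-variable/endpoint strategy you sketch.
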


We allow the pentagon $P$ to degenerate into
a polygon with fewer sides.
For example, $B$ may degenerately lie in the segment $[A,C]$, $E$ may coincide with $C$ or $F$, $F$ may coincide with $E$ or $A$.
This assumption allows us to consider a compact subset of polygons in the plane with respect to the Hausdorff metric.
In particular, one can be sure that there exists a pentagon with a minimum perimeter.
\smallskip

Let us consider the following values: $\alpha=\angle EAB=\angle EBA$, $\beta=\angle FBC= \angle FCB$, $\gamma= \angle BEF =\angle BFE$,
$\theta=\angle EBF =\pi-2\gamma$.
It is clear that $|AB|=2\cos \alpha$, $|BC|=2\cos \beta$, $|EF|=2 \cos \gamma =2 \sin (\theta/2)$, see Fig. \ref{Fig_arch}~a).

\begin{remark}\label{re.eqv.pent}
In terms of the angles $\alpha, \beta, \gamma$, one can give a complete description of the cases when the equality $L(P)=3$  holds in Theorem \ref{th.perpent}.
From Propositions \ref{pr.perpen2}, \ref{pr.perpen3p}, and~\ref{pr.perpen3} we see that $L(P)=3$ exactly for the following values
of $(\alpha, \beta, \gamma)$: $(\pi/3,\pi/3,\pi/3)$, which corresponds to  $E=C$, $F=A$, $|EF|=|AB|=|BC|=1$;
$(\pi/3,\pi/2,\pi/3)$, which corresponds to  $B=C$, $A=F$, $|EF|=|AB|=|BE|=1$; $(\pi/2,\pi/3,\pi/3)$, which corresponds to  $B=A$, $C=E$, $|BF|=|BC|=|EE|=1$;
$(\alpha,\beta,\pi/2)$ with $\cos(\alpha)+\cos(\beta)=1$, which corresponds to a family of quadrangles with $E=F$ and $|AB|+|BC|=1$;
$(\arccos(1/4),\arccos(1/4),\arccos(1/4))$,
which correspond to a special pentagon with perimeter $3$, see Fig. \ref{Fig_arch}~b).
\end{remark}

\begin{remark}\label{re.eqv.pent2}
The pentagon in Fig. \ref{Fig_arch}~b) with $\alpha=\beta=\gamma=\arccos(1/4)$ is quite remarkable. Note that
$\angle CEB= \angle CFB =\angle CAB = \angle AFB =\angle AEB=\angle ACB= \angle ECF=  \angle EBF = \angle EAF=2\arcsin(1/4)=\arccos(7/8)$.
In particular, all vertices of this pentagon lie on the same circle. Moreover, $|AC|=7/8$.
Thus, if we increase this pentagon by $8$ times, we get an {\it integer pentagon}, in which all sides and diagonals have integer lengths.
This pentagon was found at first in \cite{Mu1953}, see also the discussion in \cite[P.~19]{PeJo1997}.
\end{remark}

\begin{figure}[t]
\begin{minipage}[h]{0.45\textwidth}
\center{\includegraphics[width=0.98\textwidth, trim=0in 0in 0mm 0mm, clip]{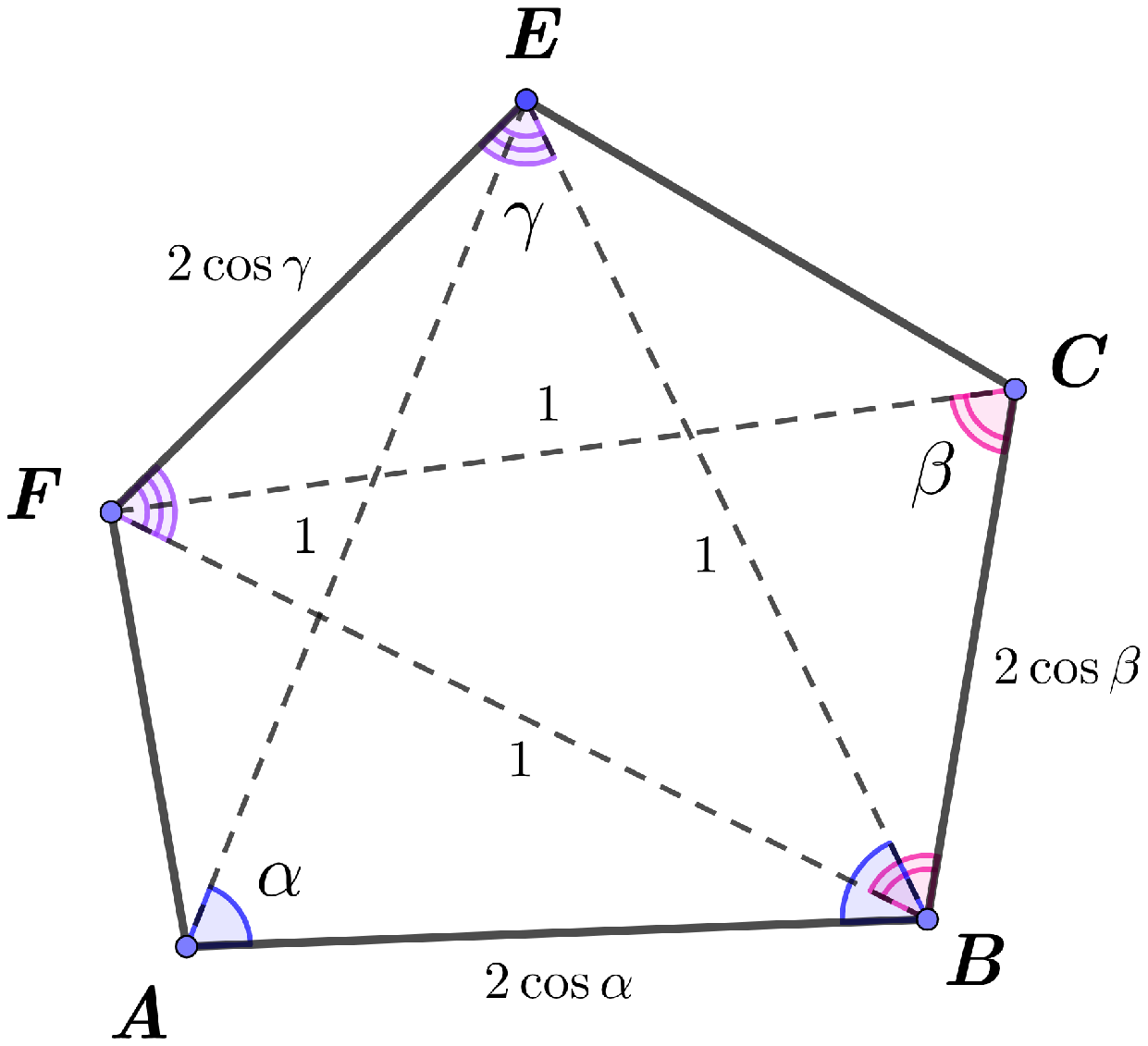} \\ a)}
\end{minipage}
\hspace{3mm}
\begin{minipage}[h]{0.45\textwidth}
\center{\includegraphics[width=0.94\textwidth, trim=0mm 0in -2mm 0mm, clip]{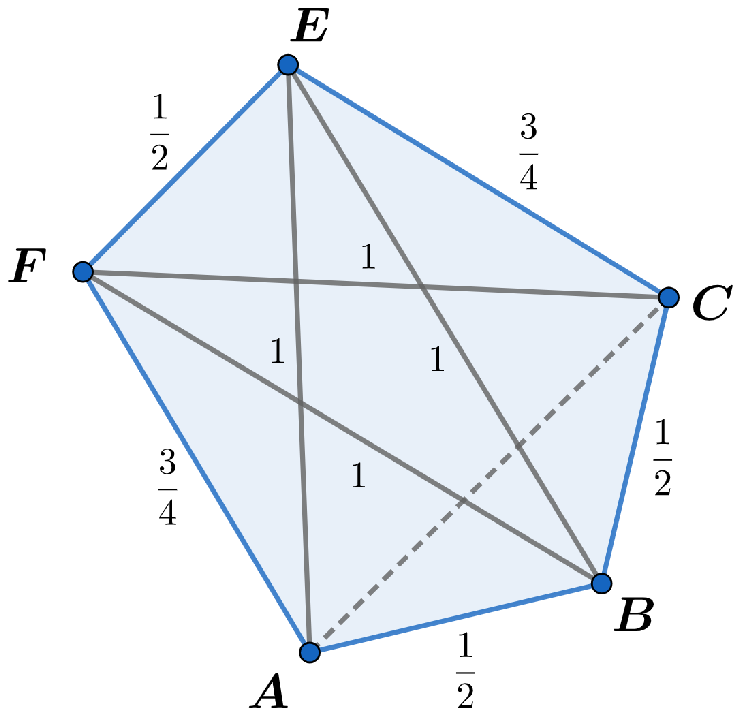} \\ b)}
\end{minipage}
\caption{
a) A pentagon $P$ as in Theorem \ref{th.perpent};
b) A special pentagon $P$ with perimeter $3$.
}
\label{Fig_arch}
\end{figure}

For given $\theta, \gamma \in [0,\pi/2]$ with $2\gamma+\theta=\pi$, let us consider the function
\begin{eqnarray}\label{eq.f1}
f(x)&=&1+4 \cos^2(x)-4\cos(x)\cos(x-\theta) \notag\\
&=&3+2\cos(2x)-2\cos(2x-\theta)-2\cos(\theta)\notag\\
&=&3+2\cos(2x)+\cos(2\gamma)+\cos(2x+2\gamma)\notag\\
&=&1+4\cos^2(x)+4\cos(x)\cos(x+2\gamma)\,.
\end{eqnarray}

Obviously, $f^{\prime}(x)=4\left(\sin(2x-\theta)-\sin(2x)\right)$.
If $\theta\in (0,\pi/3]$ and  $x \in [\pi/3,\pi/2]$, then  $\sin(2x-\theta)>\sin(2x)$, and $f(x)$ increases for $x\in [\pi/3,\pi/2]$.
In particular, $f(\pi/3)=2-2\cos(\pi/3-\theta)<2-2\cos(\pi/3)=1=f(\pi/2)$.

It is easy to check that $|AF|=\sqrt{f(\alpha)}$ and $|CE|=\sqrt{f(\beta)}$. Therefore, we have the following explicit expressions for the perimeter $L(P)$:

\begin{equation}\label{eq.per1}
L(P)=\mathcal{A}(\alpha,\beta,\gamma)=\mathcal{B}(\alpha,\beta,\theta)=\mathcal{C}(u,v,\gamma),
\end{equation}
where

\begin{eqnarray}\label{eq.per2}
\mathcal{A}(\alpha,\beta,\gamma)&=&2 \bigl(\cos(\alpha)+ \cos (\beta)+ \cos(\gamma)\Bigr)+\sqrt{1+4\cos^2(\alpha)+4\cos(\alpha)\cos(\alpha+2\gamma)} \notag\\
&&+\sqrt{1+4\cos^2(\beta)+4\cos(\beta)\cos(\beta+2\gamma)},
\end{eqnarray}

\begin{eqnarray}\label{eq.per3}
\mathcal{B}(\alpha,\beta,\theta)&=&2 \bigl(\cos(\alpha)+ \cos (\beta)+ \sin(\theta/2)\Bigr)+\sqrt{1+4\cos^2(\alpha)-4\cos(\alpha)\cos(\alpha-\theta)} \notag\\
&&+\sqrt{1+4\cos^2(\beta)-4\cos(\beta)\cos(\beta-\theta)},
\end{eqnarray}

\begin{eqnarray}\label{eq.per4}
\mathcal{C}(u,v,\gamma) = 2u+2v+2\cos(\gamma)+\sqrt{1+8u^2\cos^2(\gamma)-8u\sqrt{1-u^2}\sin(\gamma)\cos(\gamma)} \notag\\
+\sqrt{1+8v^2\cos^2(\gamma)-8v\sqrt{1-v^2}\sin(\gamma)\cos(\gamma)},
\end{eqnarray}
with $u=\cos(\alpha)$ and $v=\cos(\beta)$.
We will use all these formulas for different purposes.

\smallskip

\begin{remark}\label{re.equal0}
If $\alpha \leq \pi/3$, then $\angle AEB \geq \pi/3$ and the perimeter of the triangle $\triangle ABE$ is $\geq 3$. By
Proposition \ref{monotper} we have $L(P)\geq 3$ with equality only if $F=A$ and $C=E$.
The same we have under the assumption $\beta \leq \pi/3$ ($\angle BFC \geq \pi/3$, and the perimeter of the triangle $\triangle BFC$ is $\geq 3$).
If $\gamma \leq \pi/3$ (this is equivalent to $\theta \geq \pi/3$), then $\angle EBF \geq \pi/3$ and the perimeter of the triangle $\triangle EBF$ is $\geq 3$.
In this case we have also $L(P)\geq 3$ with equality only if $F=A$ and $C=E$.

Note also that for $\gamma=\pi/2$, we get $E=F$ in the pentagon $P$, $|AE|=|CF|=1$ and $|AB|+|BC|\geq 1$. Therefore, $L(P)\geq 3$ with
$L(P)=3$ only when $|AB|+|BC|=1$.
\end{remark}

{\bf In what follows within this section, we assume that $\alpha, \beta \in [\pi/3,\pi/2]$ and  $\gamma \in [\pi/3,\pi/2)$ ($0<\theta \leq \pi/3)$.}
\smallskip

Our strategy for proving Theorem \ref{th.perpent} is as follows.
Let us suppose that there is a pentagon $P$ that satisfies all assumptions of this theorem and $L(P)<3$.
Then among all pentagons (possibly, degenerate) satisfying the assumptions of Theorem \ref{th.perpent}, there is pentagon $P_0$
which reaches the absolute minimum of the perimeter with a value less than $3$.
Note that $P_0$ could not be a degenerate pentagon.

Let us consider the set
\begin{equation}\label{eq.setper}
\Omega= \left\{(\alpha,\beta,\gamma) \in [\pi/3,\pi/2]^3 \subset \mathbb{R}^3 \,|\, \cos(\alpha)+\cos(\beta)\geq 1/2 \right\}.
\end{equation}
We will find the point of absolute minimum for the function $L(P)$  on this set, see (\ref{eq.per1}).
\smallskip

{\bf The first step.}
Let us describe the point of $\Omega$, where $\frac{\partial \mathcal{B}}{\partial \alpha} (\alpha,\beta,\theta)=0$.
By direct computations we get that
$$
\frac{\partial \mathcal{B}}{\partial \alpha} (\alpha,\beta,\theta)\cdot \sqrt{3+2\cos(2\alpha)-2\cos(\theta)-2\cos(2\alpha-\theta)}=2 \mathcal{D}(\alpha,\theta),
$$
where
$$
\mathcal{D}(\alpha,\theta)=\sin(2\alpha-\theta)-\sin(2\alpha)-\sin(\alpha)\sqrt{3+2\cos(2\alpha)-2\cos(\theta)-2\cos(2\alpha-\theta)}.
$$
Therefore, we should solve the equation $\mathcal{D}(\alpha,\theta)=0$
(note that this equation does not depend on $\beta$). Fortunately, this can be done explicitly.

Note that $\mathcal{D}(\alpha,0)=-2\sin(\alpha)<0$ and $\mathcal{D}(0,\pi/3)=\mathcal{D}(5\pi/12,\pi/3)=0$, whereas
$\mathcal{D}(\alpha,\pi/3)> (<)\,0$ for $\alpha \in (0,5\pi/12)$ (respectively, for $\alpha \in (5\pi/12,\pi/2]$).

It could be also checked that
$\frac{\partial^2 \mathcal{B}}{\partial \alpha \partial \theta} (\alpha,\beta,\theta)>0$ for $\alpha \in [\pi/3,\pi/2]$ and $\theta\in (0,\pi/3)$.
This means that for any given $\alpha \in [\pi/3,\pi/2]$ there is at most one $\theta \in [0,\pi/3]$ such that $\mathcal{D}(\alpha,\theta)=0$.
Taking into account the values of $\mathcal{D}(\alpha,0)$ and $\mathcal{D}(\alpha,\pi/3)$ for $\alpha \in [\pi/3,\pi/2]$, we can say more:
There is exactly one such $\theta$ for all $\alpha \in [0,5\pi/12]$ and there is no such $\theta$ for all $\alpha \in (5\pi/12,\pi/2]$.

We can produce the above values of $\theta$ explicitly. To solve the equation $\mathcal{D}(\alpha,\theta)=0$, one can use the substitution
$w=\tan(\theta)$. Since we consider only $(\alpha,\theta) \in [\pi/3,\pi/2]\times [0,\pi/3]$,
we obtain that $\mathcal{D}(\alpha,\theta)=0$ if and only if $\theta=g(\alpha)$, where $\alpha \in [0,5\pi/12]$ and
\begin{equation}\label{eq.perda1}
g(\alpha)=
\alpha\! - \!\arctan\!\! \left( \frac{\sin(2\alpha)\!\left(2\cos(\alpha)\cos(2\alpha) +\sqrt{16\cos^6(\alpha)-16\cos^4(\alpha)+1}\right)}
{8\cos^3(\alpha)\sin^2(\alpha) - \cos(2\alpha)\sqrt{16\cos^6(\alpha)-16\cos^4(\alpha)+1}}\right).
\end{equation}

The function $g(\alpha)$ is continuous and convex on the interval $[0,5\pi/12]$, whereas $g(0)=g(5\pi/12)=\pi/3$, see Fig. \ref{Fig_difa}.

\begin{figure}[t]
\begin{minipage}[h]{0.4\textwidth}
\center{\includegraphics[width=0.93\textwidth]{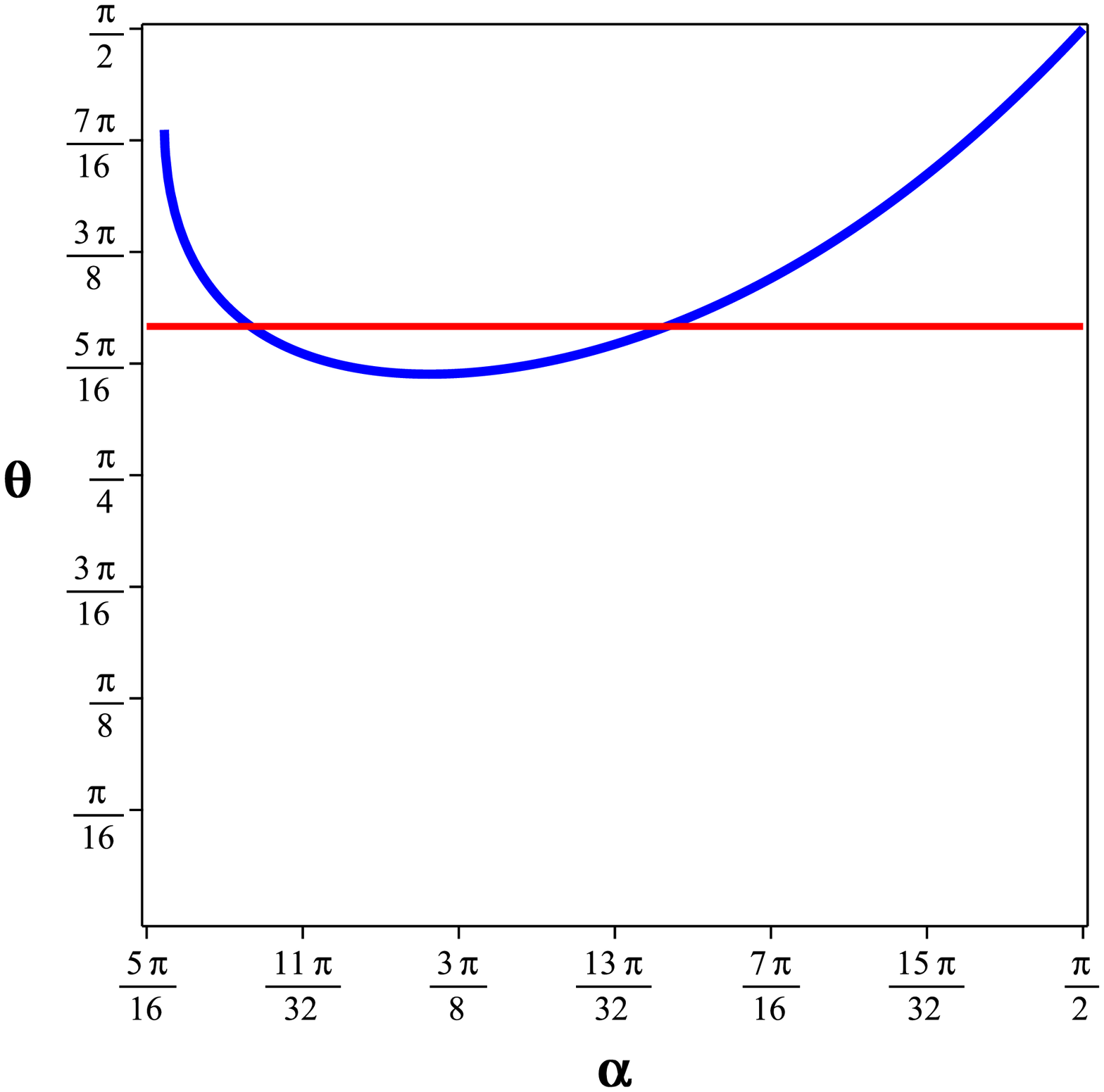} \\ a)}
\end{minipage}
\hspace{17mm}
\begin{minipage}[h]{0.4\textwidth}
\center{\includegraphics[width=0.9\textwidth]{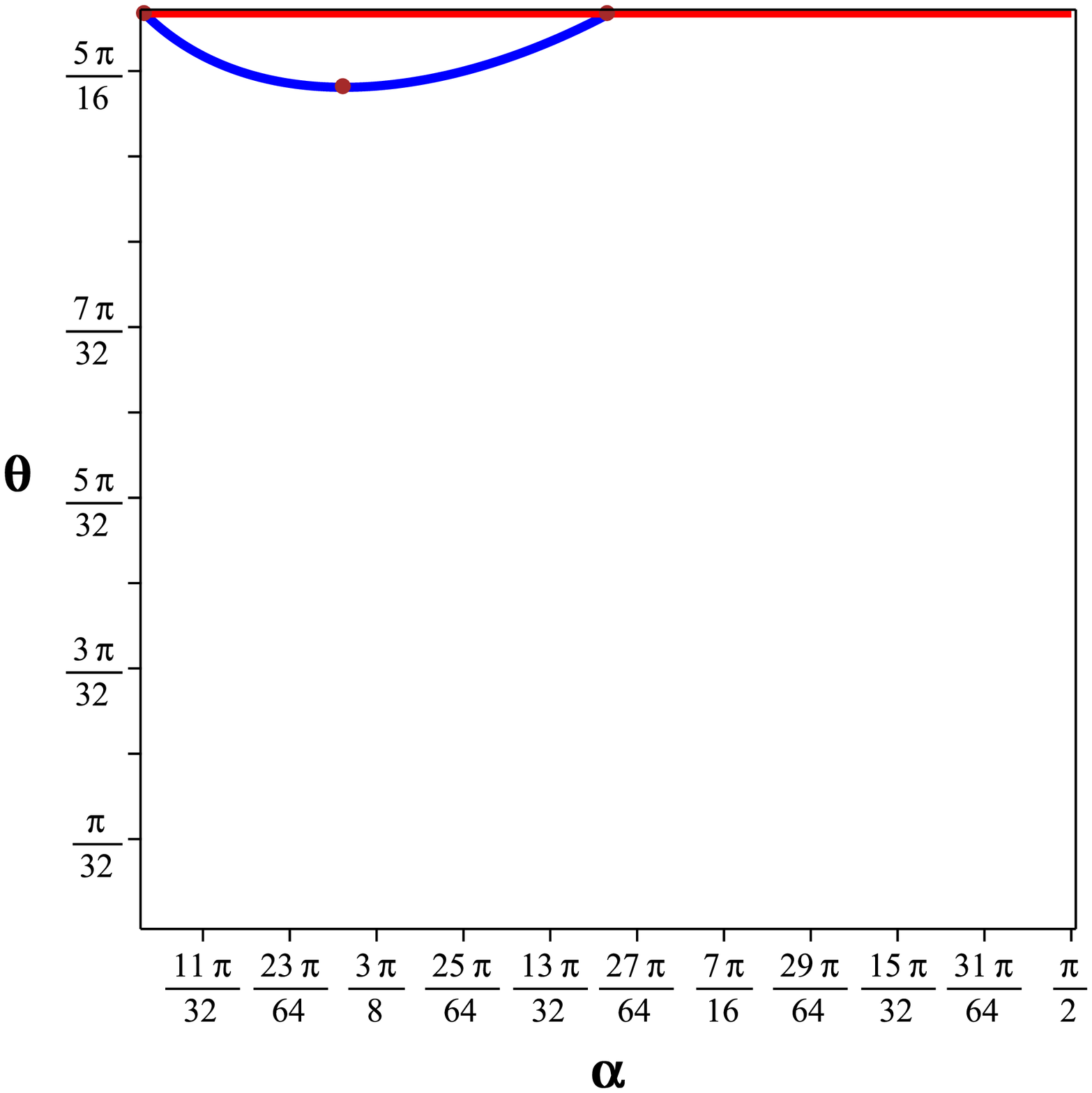} \\ b)}
\end{minipage}
\caption{
a) The function $\theta=g(\alpha)$;
b) The part of the graph of the function $\theta=g(\alpha)$ in the rectangle $[\pi/3,\pi/2]\times [0,\pi/3]$.
}
\label{Fig_difa}
\end{figure}

The minimum value of this function is
$\theta_0=\frac{\bigl(4\sqrt{37}-5\bigr)\sqrt{12 + 3\sqrt{37}}}
{\bigl(43-2\sqrt{37}\bigr)\sqrt{24 - 3\sqrt{37}}}
=0.9630621725\dots$ and it is reached at the point
$\alpha_0=\arccos\left(\frac{\sqrt{24 - 3\sqrt{37}}}{6}\right)=1.159593548\dots$.

It is easy to check that $\mathcal{D}(\alpha,\theta)>0$ for all points in $[\pi/3,\pi/2]\times [0,\pi/3]$
lying above the curve $\theta=g(\alpha)$. For all other points in $[\pi/3,\pi/2]$ that are not on the graph, we get $\mathcal{D}(\alpha,\theta)<0$.

The above arguments lead to the following observation.
If we have the pentagon with minimum perimeter and with the parameters $(\alpha,\beta,\gamma)$ such that $\cos(\alpha)+\cos(\beta)>1/2$
(that is an unconstrained minimum),
then $\theta = g(\alpha)$, $\theta \geq \theta_0$, and $\alpha \leq \alpha_0$.
Indeed, the sign of $\frac{\partial \mathcal{B}}{\partial \alpha}$ coincides with the sign of $\mathcal{D}(\alpha,\theta)$.
Further on, for a fixed $\theta$ and for $\alpha$ on the interval $[\pi/3,\pi/2]$ we see that $\mathcal{D}(\alpha,\theta)<0$ for $\theta <\theta_0$.
Hence, the point with such $\theta$ cannot be the minimum point for $\mathcal{B}$ if $\alpha< \pi/2$.
On the other hand, if $\theta \in [\theta_0, \pi/3]$, then we can have a minimum point for $\mathcal{B}$ only if $\theta=g(\alpha)$, where $\alpha \leq \alpha_0)$.
Indeed, in such points, $\frac{\partial \mathcal{B}}{\partial \alpha}$ changes the sign from ``--'' to ``+'' as $\alpha$ increases
(if $\alpha >\alpha_0$, then $\frac{\partial \mathcal{B}}{\partial \alpha}$ changes the sign from ``+'' to ``--'' and could not be the minimum point).

Note that the set $\Omega$ and the function $\mathcal{B}(\alpha,\beta,\theta)$ are symmetric with respect to the permutation of $\alpha$  and $\beta$.
Therefore, for a minimum point $(\alpha,\beta,\gamma)$ of the function $\mathcal{B}(\alpha,\beta,\theta)$ such that $\cos(\alpha)+\cos(\beta)>1/2$,
we get also the following:
$\theta = g(\beta)$, $\theta \geq \theta_0$, and $\beta \leq \alpha_0$. Since $g$ is (strictly) decreasing on the interval $[\pi/3,\alpha_0]$, we get that
$\alpha =\beta$. All the above reasoning leads to the following important

\begin{prop}\label{pr.perpen1}
If the point $(\alpha,\beta,\theta)$ is the point of absolute minimum value of the function $\mathcal{B}(\alpha,\beta,\theta)$ on the set $\Omega$,
then we have one of the following
possibilities:

1) $\beta=\alpha\leq \alpha_0=\arccos\left(\frac{\sqrt{24 - 3\sqrt{37}}}{6}\right)=1.159593548\dots$ and $\theta=g(\alpha)$;

2) $\cos(\alpha)+\cos(\beta)=1/2$.

\end{prop}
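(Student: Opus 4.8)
The plan is to combine the analysis of the first step with a symmetric analysis of the $\theta$-derivative and then invoke compactness. We seek the absolute minimum of $\mathcal{B}(\alpha,\beta,\theta)$ (equivalently, of $L(P)$ in the coordinates \eqref{eq.per1}) on the compact set $\Omega$ defined by \eqref{eq.setper}; such a minimum exists by continuity. The boundary of $\Omega$ consists of: (i) the face $\cos(\alpha)+\cos(\beta)=1/2$, which is alternative 2); (ii) pieces of the faces $\alpha,\beta \in \{\pi/3,\pi/2\}$ and $\theta \in \{0,\pi/3\}$ (recall $\gamma=\pi/2$, i.e.\ $\theta=0$, is excluded by the standing assumption of the section, but $\theta \to 0$ and $\theta=\pi/3$ must be treated). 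First I would dispose of the interior critical points: at any interior minimum all three partials of $\mathcal{B}$ vanish, so in particular $\tfrac{\partial \mathcal{B}}{\partial\alpha}=\tfrac{\partial \mathcal{B}}{\partial\beta}=0$. By the first-step computation this forces $\mathcal{D}(\alpha,\theta)=\mathcal{D}(\beta,\theta)=0$, hence $\theta=g(\alpha)=g(\beta)$ with $\theta\ge\theta_0$ and $\alpha,\beta\le\alpha_0$; since $g$ is strictly decreasing on $[\pi/3,\alpha_0]$, we conclude $\alpha=\beta$, which is exactly alternative 1) (the endpoint $\alpha=\alpha_0$ being included).

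The remaining work is to handle a minimum on the part of $\partial\Omega$ coming from the box faces $\alpha$ or $\beta$ equal to $\pi/3$ or $\pi/2$, or $\theta=\pi/3$, and to show it is either already covered by 1) or 2), or can be pushed to such a point. The cleanest route: Remark \ref{re.equal0} already shows $L(P)\ge 3$ whenever $\alpha=\pi/3$, $\beta=\pi/3$, or $\theta=\pi/3$ ($\gamma=\pi/3$), with equality forcing the degenerate configurations listed there, all of which have $(\alpha,\beta,\gamma)$ of the form in 1) (namely $(\pi/3,\pi/3,\pi/3)$ etc.\ — one checks these satisfy $\theta=g(\alpha)$ since $g(\pi/3)=\pi/3$). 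So on those faces the function is $\ge 3$; since we are looking for a point of \emph{absolute} minimum and we will separately see (in the proof of Theorem \ref{th.perpent}, using this very proposition) that the global minimum equals $3$, any absolute minimum lying on such a face is automatically of type 1). The face $\alpha=\pi/2$ (or $\beta=\pi/2$) needs the monotonicity established in the first step: for $\theta\in(0,\pi/3]$ and $x\in[\pi/3,\pi/2]$ we have $\mathcal{D}(\alpha,\theta)<0$ unless $\theta=g(\alpha)$ above the curve, so $\tfrac{\partial\mathcal{B}}{\partial\alpha}<0$ for $\alpha$ near $\pi/2$ when $\theta<\theta_0$, and more generally the sign analysis of $\mathcal{D}$ shows $\alpha=\pi/2$ cannot be a minimum in the $\alpha$-direction unless we are forced onto $\cos\alpha+\cos\beta=1/2$; I would phrase this as: if $\alpha=\pi/2$ at the minimum and $\cos\alpha+\cos\beta>1/2$, then decreasing $\alpha$ slightly keeps us in $\Omega$ and strictly decreases $\mathcal{B}$ (since $\tfrac{\partial\mathcal{B}}{\partial\alpha}>0$ there would be needed for a boundary min at the \emph{upper} endpoint, but the sign of $\mathcal{D}$ rules this out), a contradiction; hence $\cos\alpha+\cos\beta=1/2$, i.e.\ case 2).

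Assembling: an absolute minimum is either interior (giving case 1) via $\mathcal{D}(\alpha,\theta)=\mathcal{D}(\beta,\theta)=0$ and strict monotonicity of $g$), or lies on $\partial\Omega$; the faces $\alpha=\pi/3$, $\beta=\pi/3$, $\theta=\pi/3$ force (by Remark \ref{re.equal0}) a point already of type 1), the faces $\alpha=\pi/2$ or $\beta=\pi/2$ force (by the sign of $\mathcal{D}$) case 2), and the face $\cos\alpha+\cos\beta=1/2$ is case 2) by definition; the limiting case $\theta\to 0$ is $\gamma=\pi/2$, excluded by the running assumption but in any event gives $L(P)\ge 3$ by Remark \ref{re.equal0}. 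The main obstacle I anticipate is the bookkeeping on the edges and corners of the box $[\pi/3,\pi/2]^2\times[0,\pi/3]$ (where a minimum could a priori sit at a corner satisfying none of the first-order conditions), and making rigorous the claim that the sign pattern of $\mathcal{D}(\alpha,\theta)$ relative to the graph $\theta=g(\alpha)$ — positive above, negative below — genuinely precludes $\alpha=\pi/2$ (or $\beta=\pi/2$) at an unconstrained minimum; this rests on the earlier verified inequality $\tfrac{\partial^2\mathcal{B}}{\partial\alpha\,\partial\theta}>0$ and the computed boundary values $\mathcal{D}(\alpha,0)<0$, $\mathcal{D}(5\pi/12,\pi/3)=0$, which together pin down the sign chart completely.
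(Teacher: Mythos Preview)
Your outline follows the paper's strategy (study the sign of $\mathcal{D}(\alpha,\theta)$, force $\theta=g(\alpha)=g(\beta)$, then use strict monotonicity of $g$ on $[\pi/3,\alpha_0]$), but two of your boundary arguments do not stand, and you miss the elementary observation that makes the boundary bookkeeping unnecessary.

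\textbf{The circularity is a real gap.} For the faces $\alpha=\pi/3$, $\beta=\pi/3$, $\theta=\pi/3$ you invoke Remark~\ref{re.equal0} to get $L(P)\ge 3$ and then the \emph{conclusion} of Theorem~\ref{th.perpent} (global minimum $=3$) to place the minimum in case 1). Since Theorem~\ref{th.perpent} is deduced from this very proposition, that step is inadmissible.

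\textbf{The $\alpha=\pi/2$ argument has the sign reversed.} At an upper endpoint a one-sided minimum requires $\partial\mathcal{B}/\partial\alpha\le 0$, not $>0$; and in fact $\mathcal{D}(\pi/2,\theta)<0$ for every $\theta\in(0,\pi/3]$, which is \emph{compatible} with a boundary minimum at $\alpha=\pi/2$. So the sign of $\mathcal{D}$ does not exclude that face.

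Both problems disappear once you notice what the paper uses: if $\cos\alpha+\cos\beta>1/2$ with $\alpha,\beta\in[\pi/3,\pi/2]$, then necessarily $\alpha<\pi/2$ and $\beta<\pi/2$ (because $\cos(\pi/2)=0$ and $\cos\beta\le 1/2$). So outside case 2) the upper faces $\alpha=\pi/2$, $\beta=\pi/2$ simply do not occur. For the lower face $\alpha=\pi/3$ no appeal to Remark~\ref{re.equal0} is needed: from $\mathcal{D}(\pi/3,0)<0$, $\mathcal{D}(\pi/3,\pi/3)=0$, and $\partial\mathcal{D}/\partial\theta>0$ one gets $\mathcal{D}(\pi/3,\theta)<0$ for all $\theta<\pi/3$, so a minimum with $\alpha=\pi/3$ forces $\theta=\pi/3=g(\pi/3)$, which is already case 1). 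Finally, the argument never differentiates in $\theta$, so the faces $\theta=0$ and $\theta=\pi/3$ need no separate treatment at all: for each fixed $\theta$ you only analyse the one-variable function $\alpha\mapsto\mathcal{B}(\alpha,\beta,\theta)$ on $[\pi/3,\pi/2)$, and the sign chart of $\mathcal{D}$ already covers every $\theta\in(0,\pi/3]$.
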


\begin{figure}[t]
\begin{minipage}[h]{0.4\textwidth}
\center{\includegraphics[width=0.9\textwidth]{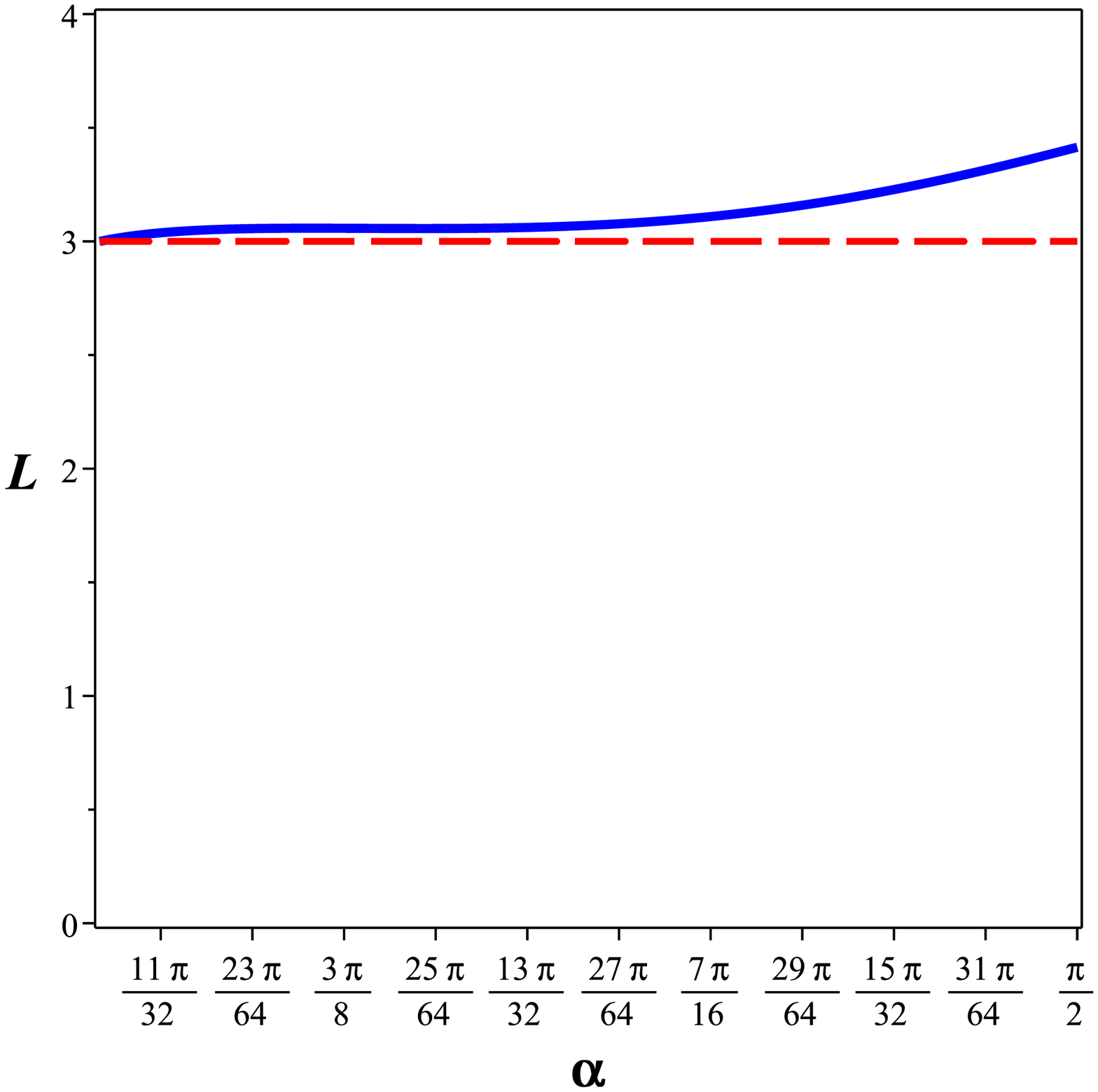} \\ a)}
\end{minipage}
\hspace{17mm}
\begin{minipage}[h]{0.4\textwidth}
\center{\includegraphics[width=0.9\textwidth]{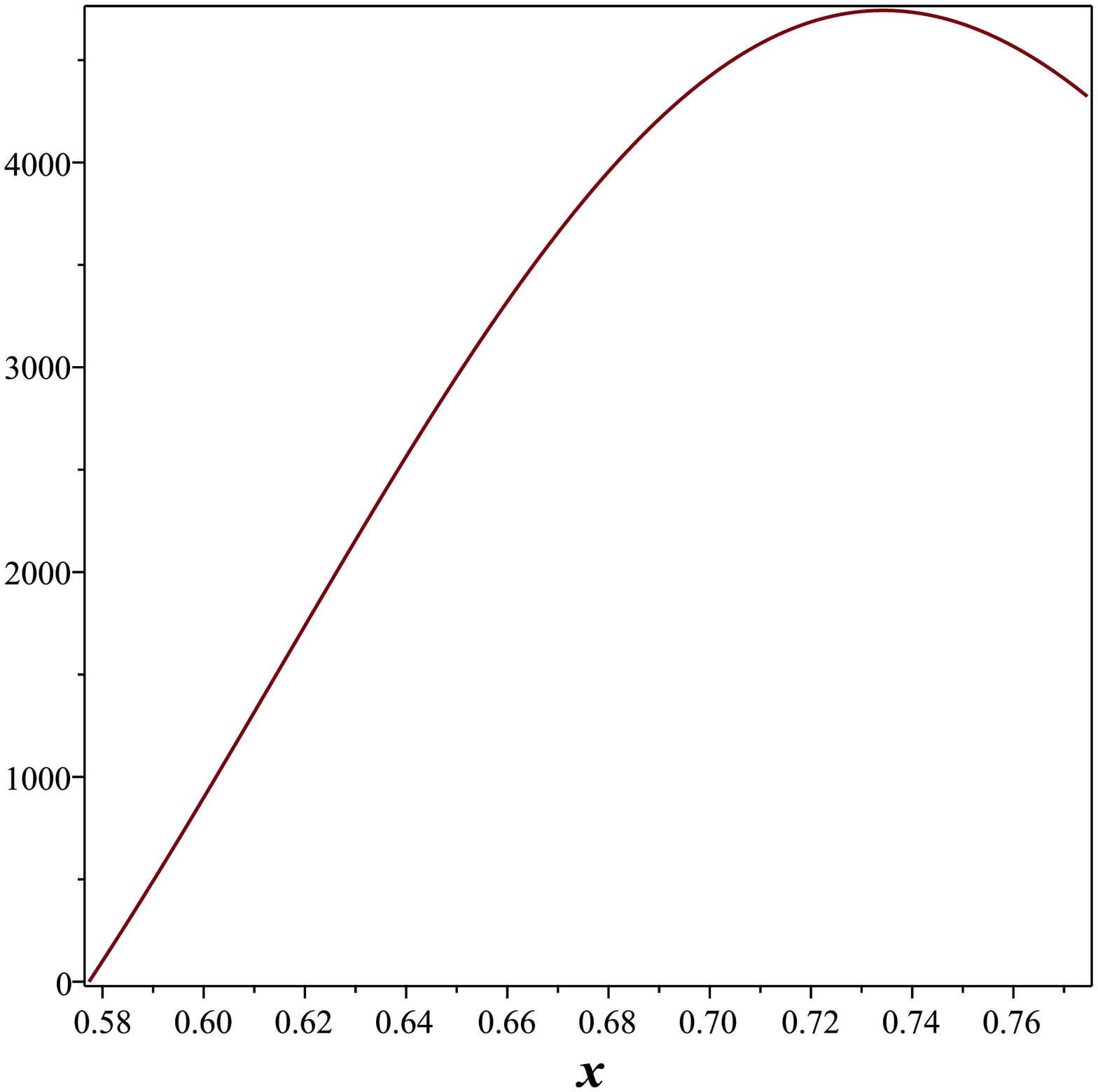} \\ b)}
\end{minipage}
\caption{
a) The graph of the function $\alpha \mapsto \mathcal{B}(\alpha,\alpha, g(\alpha))$ for $\alpha \in [\pi/3,\pi/2]$;
b) The graph of the function $V_3(x)$ for $x \in [1/\sqrt{3},\sqrt{3/5}]$.
}
\label{Fig_difaa}
\end{figure}

\begin{remark} If we take $\alpha=\beta=\pi/3$, then $\theta=g(\pi/3)=\pi/3$ and $\mathcal{B}(\pi/3,\pi/3,\pi/3)=3$.
In fact, the function $\alpha \mapsto \mathcal{B}(\alpha,\alpha, g(\alpha))$ takes values $>3$ for all $\alpha\in(\pi/3,\alpha_0]$, see Fig.~\ref{Fig_difaa}~a).
On the other hand, the explicit expression for this function is quite complicated to use it for proving the above observation. Hence, we need to find a different
approach.
Suitable tools in this case are working with polynomial ideals, the ability to eliminate some variables and to find the corresponding Gr\"{o}bner bases,
see, e.~g., \cite{CLOS2007}.
It is reasonable to perform all necessary calculations using some standard system of symbolic calculations.
\end{remark}
\smallskip

Now we are going to consider Case 1) in Proposition \ref{pr.perpen1} in detail.
Moreover, we consider all points $(\alpha,\beta,\gamma)\subset \Omega\subset [\pi/3,\pi/2]^3$ with $\beta=\alpha$. Since $\cos(\alpha)+\cos(\beta)\geq 1/2$, then
$\alpha \leq \arccos(1/4)$.
It is easy to write down the criticality condition for the function
$$
\mathcal{A}(\alpha,\alpha,\gamma)=4\cos(\alpha)+ 2\cos(\gamma)+2\sqrt{3+2\cos(2\alpha)+2\cos(2\gamma)+2\cos(2\alpha+2\gamma)}.
$$
We have
\begin{eqnarray*}
\sin(\alpha)+\frac{\sin(2\alpha)+\sin(2\alpha+2\gamma)}{\sqrt{3+2\cos(2\alpha)+2\cos(2\gamma)+2\cos(2\alpha+2\gamma)}}=0,\\
\sin(\gamma)+2\frac{\sin(2\gamma)+\sin(2\alpha+2\gamma)}{\sqrt{3+2\cos(2\alpha)+2\cos(2\gamma)+2\cos(2\alpha+2\gamma)}}=0.
\end{eqnarray*}
The above equations imply the following:
\begin{eqnarray*}
(3+2\cos(2\alpha)+2\cos(2\gamma)+2\cos(2\alpha+2\gamma))\sin(\alpha)^2&=&(\sin(2\gamma)+\sin(2\alpha+2\gamma))^2,\\
(3+2\cos(2\alpha)+2\cos(2\gamma)+2\cos(2\alpha+2\gamma))\sin(\gamma)^2&=&4(\sin(2\gamma)+\sin(2\alpha+2\gamma))^2.
\end{eqnarray*}
To study this system, we use the substitution
$\alpha=2\arctan(x)$, $\gamma=\arctan(z)$, which implies
\begin{eqnarray*}
V_1(x,z):=(2x^2z+xz^2+x-2z)\\
\times(2x^6z+3x^5z^2-5x^5-14x^4z-10x^3z^2+6x^3+14x^2z+3xz^2-5x-2z)=0,\\
V_2(x,z):=x^8z^4-55x^8z^2-112x^7z^3-60x^6z^4+128x^7z+388x^6z^2\\
+400x^5z^3+134x^4z^4-64x^6-384x^5z-650x^4z^2-400x^3z^3-60x^2z^4\\
+128x^4+384x^3z+388x^2z^2+112xz^3+z^4-64x^2-128xz-55z^2=0,
\end{eqnarray*}
Since $\alpha \in [\pi/3,\arccos(1/4)]$ and $\gamma \in [\pi/3,\pi/2)$, we get $x \in [1/\sqrt{3},\sqrt{3/5}]$ and $z\geq \sqrt{3}$.

We may eliminate $z$ from $V_1(x,z)=V_2(x,z)=0$ and obtain the equation $V_3(x)=0$, where
\begin{eqnarray*}
V_3(x)=x(x-1)(x+1)(x^2+1)(63-1028x^2+1914x^4-1028x^6+63x^8)\\
\times(3x^2-1)(x^2-3)(39-708x^2+1322x^4-708x^6+39x^8).
\end{eqnarray*}
It is easy to check that a unique root of $V_3(x)$ on the interval $[1/\sqrt{3},\sqrt{3/5}]$ is $x=1/\sqrt{3}$, corresponding to $\beta=\alpha=\pi/3$,
see Fig. \ref{Fig_difaa}~b). We see that $\mathcal{A}(\pi/3,\pi/3,\pi/3)=0$
(the pentagon $P$ degenerates into a regular triangle) and
$$
\mathcal{A}(\pi/3,\pi/3,\gamma)=2 + 2\cos(\gamma) + 2\sqrt{2 + \cos(2\gamma) - \sin(2\gamma)\sqrt{3}}>3
$$
for all $\gamma\in (\pi/3,\pi/2)$.

Hence, we get the following

\begin{prop}\label{pr.perpen2}
In Case 1) of Proposition \ref{pr.perpen1}
we have exactly one {\rm(}degenerate{\rm)} pentagon, corresponding to $(\alpha,\beta,\gamma)=(\pi/3,\pi/3,\pi/3)$,
with perimeter $3$. All other pentagons in Case 1) have perimeter greater that $3$.
\end{prop}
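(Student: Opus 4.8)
The plan is to regard the perimeter as a function of one parameter along the family described in Case~1) and to show it is strictly increasing away from its left endpoint. In Case~1) we have $\beta=\alpha$ and $\theta=g(\alpha)$ for $\alpha\in[\pi/3,\alpha_0]$, so, writing $\gamma(\alpha)=(\pi-g(\alpha))/2$, the perimeter equals $\phi(\alpha):=\mathcal{A}\bigl(\alpha,\alpha,\gamma(\alpha)\bigr)$. The degenerate pentagon $(\pi/3,\pi/3,\pi/3)$ — a unit equilateral triangle — is the left endpoint of this family, since $g(\pi/3)=\pi/3$; thus the proposition amounts to the statements $\phi(\pi/3)=3$ and $\phi(\alpha)>3$ for $\alpha\in(\pi/3,\alpha_0]$.

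The mechanism is as follows. Put $G(\alpha,\gamma):=\mathcal{A}(\alpha,\alpha,\gamma)$. A short computation (using $\theta=\pi-2\gamma$) gives $\partial_\alpha G=2\,\partial_\alpha\mathcal{B}$, which vanishes identically along the curve $\theta=g(\alpha)$ by the very definition of $g$ as the solution of $\mathcal{D}(\alpha,\theta)=0$. Hence $\phi'(\alpha)=\partial_\gamma G\bigl(\alpha,\gamma(\alpha)\bigr)\cdot\gamma'(\alpha)$ on $(\pi/3,\alpha_0)$. Here $\gamma'(\alpha)=-g'(\alpha)/2\neq0$, because $g$ is convex with minimum at $\alpha_0$, so $g'<0$ on $(\pi/3,\alpha_0)$. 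And $\partial_\gamma G\neq0$ along that part of the curve: were it zero somewhere there, then together with $\partial_\alpha G=0$ the corresponding $(\alpha,\gamma)$ would be an interior critical point of $G$ on the box $[\pi/3,\arccos(1/4)]\times[\pi/3,\pi/2]$; but, clearing the common square-root denominator in $\partial_\alpha G=\partial_\gamma G=0$, squaring, and substituting $\alpha=2\arctan x$, $\gamma=\arctan z$, such a point satisfies $V_1(x,z)=V_2(x,z)=0$, elimination of $z$ yields $V_3(x)=0$, and the only root of $V_3$ in $[1/\sqrt3,\sqrt{3/5}]$ is $x=1/\sqrt3$, that is, $\alpha=\pi/3$, which is not interior. (Throughout, $G$ is smooth on the box except at the corner $(\pi/3,\pi/3)$: its radicand is $|AF|^2=\bigl(2\cos\alpha+\cos(\alpha+2\gamma)\bigr)^2+\sin^2(\alpha+2\gamma)$, which vanishes on the box only there.) Therefore $\phi'\neq0$ on $(\pi/3,\alpha_0)$, so $\phi$ is strictly monotone on $[\pi/3,\alpha_0]$.

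It remains to compare the two endpoint values. One has $\phi(\pi/3)=\mathcal{A}(\pi/3,\pi/3,\pi/3)=3$, because the radicand $3+2\cos(2\pi/3)+2\cos(2\pi/3)+2\cos(4\pi/3)$ vanishes. On the other hand, using the explicit values $\cos\alpha_0=\sqrt{24-3\sqrt{37}}/6$ and $\theta_0=g(\alpha_0)$ one computes $\phi(\alpha_0)=\mathcal{B}(\alpha_0,\alpha_0,\theta_0)>3$ (numerically $\phi(\alpha_0)\approx3.06$). Since $\phi$ is strictly monotone on $[\pi/3,\alpha_0]$ and $\phi(\pi/3)=3<\phi(\alpha_0)$, it must be strictly increasing; hence $\phi(\alpha)>3$ for $\alpha\in(\pi/3,\alpha_0]$ and $\phi=3$ only at $\alpha=\pi/3$, which is exactly the assertion.

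The hard part is the elimination: $V_1$ and $V_2$ are cumbersome, their resultant with respect to $z$ is a polynomial of fairly high degree, and one must certify — best done with a computer algebra system together with exact real-root isolation — that $V_3$ has no root in the half-open interval $(1/\sqrt3,\sqrt{3/5}\,]$, while making sure that the squaring of the two criticality equations does not sneak in a spurious interior solution. The only other non-trivial point is the single evaluation $\phi(\alpha_0)>3$; everything else (the identity $\partial_\alpha G=2\partial_\alpha\mathcal{B}$, the convexity of $g$, and the value $\phi(\pi/3)=3$) is elementary.
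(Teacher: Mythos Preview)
Your argument is correct and rests on the same core computation as the paper—the polynomial elimination $V_1=V_2=0\Rightarrow V_3(x)=0$ showing that $G(\alpha,\gamma)=\mathcal{A}(\alpha,\alpha,\gamma)$ has no critical point with $\alpha\in(\pi/3,\arccos(1/4)]$—but packages it differently. The paper works directly on the two-dimensional slice $\beta=\alpha$: an absolute minimum of $\mathcal{A}$ on $\Omega$ lying in Case~1) is an unconstrained interior critical point (so in particular $\partial_\gamma\mathcal{A}=0$ as well), hence a critical point of $G$; the elimination then forces $\alpha=\pi/3$, and one finishes by checking $\mathcal{A}(\pi/3,\pi/3,\gamma)>3$ for $\gamma\in(\pi/3,\pi/2)$. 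You instead parametrize the Case~1) curve itself, observe via the chain rule that $\partial_\alpha G\equiv0$ along it by the very definition of $g$, and use the absence of interior critical points of $G$ to deduce $\partial_\gamma G\neq0$ there, whence $\phi'\neq0$; strict monotonicity then reduces everything to the single endpoint comparison $\phi(\alpha_0)>3=\phi(\pi/3)$. Your route proves Proposition~\ref{pr.perpen2} as literally stated (all values along the curve exceed $3$, not merely the value at a putative absolute minimizer), at the price of one extra numerical evaluation at $\alpha_0$; the paper's route avoids that endpoint check but tacitly imports the additional condition $\partial_\gamma\mathcal{A}=0$ from the derivation of Proposition~\ref{pr.perpen1}. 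Your caveats about the squaring step and the need for certified root isolation of $V_3$ apply equally to both arguments.
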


\smallskip

Now we are going to consider Case 2) in Proposition \ref{pr.perpen1} in details.
The boundary of the set
\begin{equation}\label{eq.setpern}
\Omega_1= \left\{(\alpha,\beta,\gamma) \in [\pi/3,\pi/2]^3 \subset \mathbb{R}^3 \,|\, \cos(\alpha)+\cos(\beta)= 1/2 \right\}
\end{equation}
consists of points of the following types:
$(\alpha,\beta,\gamma)=(\alpha,\beta,\pi/3)$ and $(\alpha,\beta,\gamma)=(\alpha,\beta,\pi/2)$, with $\cos(\alpha)+\cos(\beta)= 1/2$;
$(\alpha,\beta,\gamma)=(\pi/3,\pi/2,\gamma)$ and $(\alpha,\beta,\gamma)=(\pi/2,\pi/3,\gamma)$, with $\gamma \in (\pi/3,\pi/2)$.
Simple arguments lead to the following

\begin{prop}\label{pr.perpen3p}
If a point $(\alpha,\beta,\gamma)$ is in the boundary of the set $\Omega_1$ in Case 2) of Proposition \ref{pr.perpen1},
then $L(P)=\mathcal{A}(\alpha,\beta,\gamma)\geq 3$, whereas
$L(P)=\mathcal{A}(\alpha,\beta,\gamma)=3$ exactly for the points $(\pi/3,\pi/2,\pi/3)$, $(\pi/2,\pi/3,\pi/3)$,
and $(\alpha,\beta,\pi/2)$ with $\cos(\alpha)+\cos(\beta)=1/2$.
\end{prop}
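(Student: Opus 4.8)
The plan is to use the explicit list of boundary pieces of $\Omega_1$ given above and reduce everything to elementary planar geometry combined with the monotonicity of the perimeter (Proposition~\ref{monotper}). Recall $\partial\Omega_1$ is the union of: (i) the family $(\alpha,\beta,\pi/2)$ with $\cos\alpha+\cos\beta=1/2$; (ii) the family $(\alpha,\beta,\pi/3)$ with $\cos\alpha+\cos\beta=1/2$; (iii), (iv) the two arcs $(\pi/3,\pi/2,\gamma)$ and $(\pi/2,\pi/3,\gamma)$ with $\gamma\in[\pi/3,\pi/2]$. I would prove $L(P)\ge3$ on each of these in turn and record the equality cases. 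On piece (i) one has $\gamma=\pi/2$, which makes the isosceles triangle $BEF$ (apex angle $\theta=\pi-2\gamma=0$) degenerate, so $E=F$ and $|EF|=2\cos\gamma=0$; hence $P$ is the (possibly degenerate) quadrangle $ABCE$ and $L(P)=|AB|+|BC|+|CE|+|EA|=2\cos\alpha+2\cos\beta+1+1=3$ using $\cos\alpha+\cos\beta=1/2$. (Equivalently, substitute $\gamma=\pi/2$ in $\mathcal A$: each square root collapses to $1$.) Thus $L(P)=3$ identically on piece (i), which yields the claimed one-parameter family of equality points.

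On piece (ii) one has $\gamma=\pi/3$; since $|BE|=|BF|=1$ and $\angle EBF=\theta=\pi/3$, the triangle $\triangle BEF$ is equilateral with unit side. As $\triangle BEF\subset P$, Proposition~\ref{monotper} gives $L(P)\ge L(\triangle BEF)=3$, with equality if and only if $P=\triangle BEF$, i.e.\ the two remaining vertices $A,C$ lie in $\triangle BEF$. From the cyclic order $F,A,B,C,E$ one sees that then $A$ lies on the side $[F,B]$ and $C$ on the side $[B,E]$. If $A\ne B$ the rays $BA$ and $BF$ coincide, so $\alpha=\angle EBA=\angle EBF=\pi/3$ and then $|AB|=2\cos\alpha=1=|BF|$ forces $A=F$; the only other possibility is $A=B$, i.e.\ $\alpha=\pi/2$. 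Likewise $\beta\in\{\pi/3,\pi/2\}$. Combining with $\cos\alpha+\cos\beta=1/2$ leaves only $(\alpha,\beta)=(\pi/3,\pi/2)$ and $(\pi/2,\pi/3)$, and in each of these $P$ indeed degenerates to the unit equilateral triangle. Hence on piece (ii) we get $L(P)>3$ except at $(\pi/3,\pi/2,\pi/3)$ and $(\pi/2,\pi/3,\pi/3)$, where $L(P)=3$.

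Finally, pieces (iii) and (iv) are symmetric; take (iii), so $\alpha=\pi/3$ and $\beta=\pi/2$ (hence $B=C$). Since $|AE|=|BE|=1$ and $\alpha=\pi/3$, the triangle $\triangle ABE$ is equilateral with unit side and is contained in $P$, so $L(P)\ge3$ with equality iff $P=\triangle ABE$, i.e.\ iff $C$ and $F$ lie in $\triangle ABE$. Now $C=B$ automatically does; while $F$ would have to lie on the side $[E,A]$, and $F$ interior to $[E,A]$ forces $\gamma=\angle BEF=\angle BEA=\pi/3$, $F=A$ forces $|EF|=|EA|=1$ hence $\gamma=\pi/3$, and $F=E$ forces $\gamma=\pi/2$ --- all impossible for $\gamma\in(\pi/3,\pi/2)$. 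Thus $L(P)>3$ on the open arc (iii); its endpoints $\gamma=\pi/3,\pi/2$ lie on pieces (ii), (i) and have already been handled, and (iv) is identical with the roles of $(\alpha,A,E)$ and $(\beta,C,F)$ interchanged. Assembling the four pieces gives the statement.

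The only genuinely delicate point is the equality discussion on piece (ii): one must enumerate carefully all ways the vertices $A,C$ can sit on the boundary of $\triangle BEF$ (allowing coinciding vertices and vanishing sides) and then check that the constraint $\cos\alpha+\cos\beta=1/2$ excludes the combinations $(\alpha,\beta)=(\pi/3,\pi/3)$ and $(\pi/2,\pi/2)$, leaving exactly the two listed points. Everything else is a short computation or a direct appeal to Proposition~\ref{monotper}.
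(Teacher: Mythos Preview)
Your argument is correct and is exactly the kind of ``simple arguments'' the paper alludes to just before the proposition: on each boundary piece you invoke the appropriate unit equilateral subtriangle ($\triangle BEF$ when $\gamma=\pi/3$, $\triangle ABE$ or $\triangle BCF$ when $\alpha=\pi/3$ or $\beta=\pi/3$) together with Proposition~\ref{monotper}, and handle $\gamma=\pi/2$ by the degeneration $E=F$---precisely the observations recorded in Remark~\ref{re.equal0}. One cosmetic remark: in piece~(iii) your subcase ``$F$ interior to $[E,A]$'' is actually vacuous, since any point strictly between $E$ and $A$ in the unit equilateral triangle $ABE$ is at distance $<1$ from $B$, contradicting $|BF|=1$; but your conclusion (no equality for $\gamma\in(\pi/3,\pi/2)$) is unaffected.
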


We illustrate this proposition and Case 2) in general
by showing the graphs of the functions $\mathcal{F}(u,\gamma)= \mathcal{C}(u,{1}/{2}-v,\gamma)$
and $\mathcal{G}(\gamma)= \mathcal{C}(1/4,1/4,\gamma)=\mathcal{F}(1/4,\gamma)$, see Fig. \ref{Fig_rest} (recall that
$\cos(\alpha)+\cos(\beta)=1/2$ is equivalent to $u+v=1/2$).
The points $(u,v,\gamma)=(0,1/2,\pi/3)$ and $(u,v,\gamma)=(1/2,0,\pi/3)$ correspond to
$(\alpha,\beta,\gamma)=(\pi/2,\pi/3,\pi/3)$ and $(\alpha,\beta,\gamma)=(\pi/3,\pi/2,\pi/3)$, when the pentagon $P$ degenerates into a regular triangle with perimeter $3$.
The points $(u,v,\gamma)=(u,1/2-u,\pi/2)$, $u\in [0,1/2]$, correspond to the cases with $F=E$ and $|AB|+|BC|=1$ ($P$ degenerates to some quadrangles with perimeter $3$).
The points $(u,v,\gamma)=(u,1/2-u,\pi/3)$ for $u\in (0,1/2)$ correspond to the pentagons $P$ with perimeter $>3$.
\smallskip

\begin{figure}[t]
\begin{minipage}[h]{0.43\textwidth}
\center{\includegraphics[width=1.2\textwidth, trim=0mm 9mm 0mm 19mm, clip]{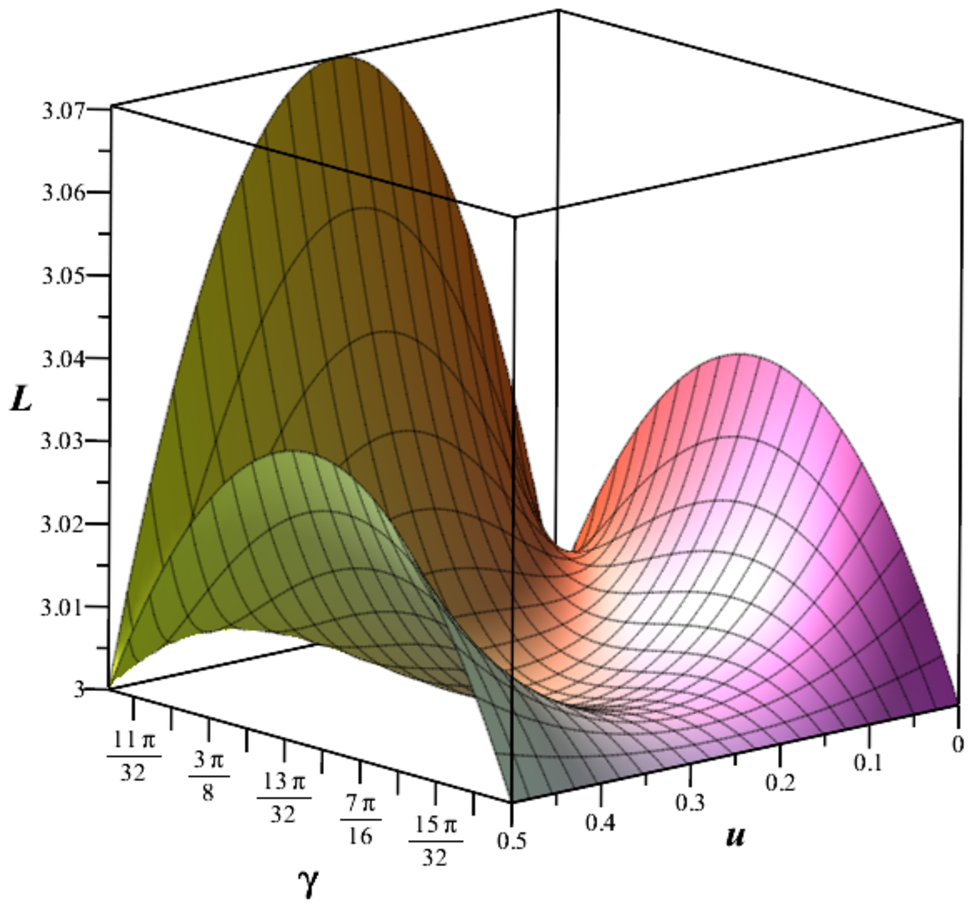} \\ a)}
\end{minipage}
\hspace{7mm}
\begin{minipage}[h]{0.43\textwidth}
\center{\includegraphics[width=0.9\textwidth, trim=0mm 0mm 0mm 0mm, clip]{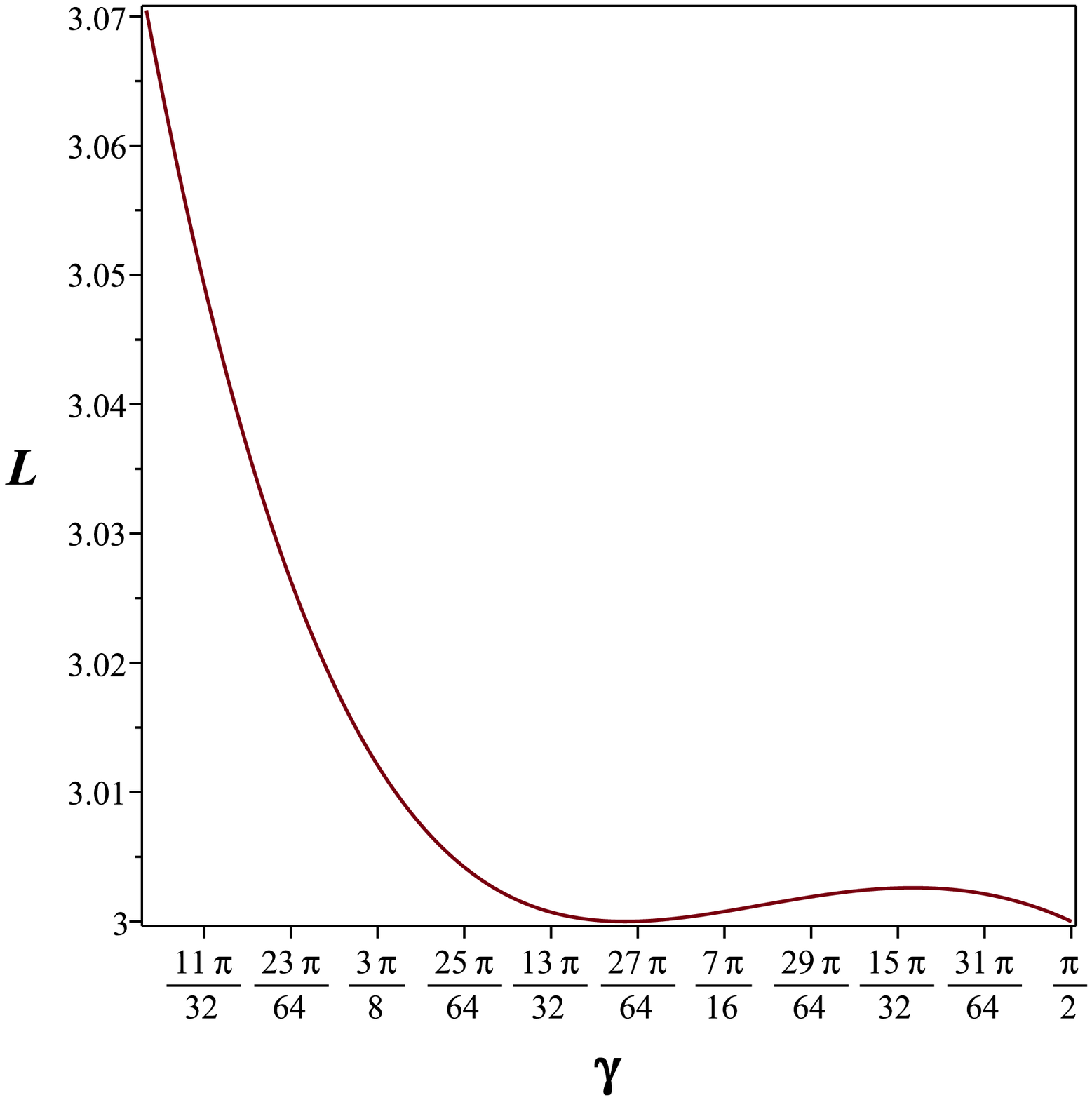} \\ b)}
\end{minipage}
\caption{
a) The graph of  $\mathcal{F}(u,\gamma)$;
b) The graph of $\mathcal{G}(\gamma)$.
}
\label{Fig_rest}
\end{figure}

Now we are going to study points in the interior of the set $\Omega_1$.
According to \eqref{eq.per1} and~\eqref{eq.per2}, we have

\begin{equation}\label{eg.perr1}
L(P)=\mathcal{A}(\alpha,\beta,\gamma)=2 \bigl(\cos(\alpha)+ \cos (\beta)+ \cos(\gamma)\Bigr)+\sqrt{f(\alpha)}+\sqrt{f(\beta)},
\end{equation}
where
$$
f(x)=3+2\cos(2x)+\cos(2\gamma)+\cos(2x+2\gamma)=1+4\cos^2(x)+4\cos(x)\cos(x+2\gamma).
$$
Since $\cos(\alpha)+\cos(\beta)=1/2$, $\alpha, \beta \in [\pi/3,\pi/2]$, then
$\alpha=\pi/2$ is equivalent to $\beta=\pi/3$ as well as $\alpha=\pi/3$ is equivalent to $\beta=\pi/2$.
Note also that
$\mathcal{A}(\pi/2,\pi/3,\gamma)=\mathcal{A}(\pi/3,\pi/2,\gamma)\geq 3$ for all $\gamma$ (since $P$ contains a regular triangle with perimeter $3$).
In what follows, we assume that $\alpha, \beta \in (\pi/3,\pi/2)$.
Further, we will use the condition to be a critical point for any point with minimal value for
$\mathcal{A}(\alpha,\beta,\gamma)$. Obviously, this condition has the following form (recall that the constraint condition is $\cos(\alpha)+\cos(\beta)=1/2$):

\begin{equation}\label{eq.crit1}
\frac{\partial \mathcal{A}}{\partial \gamma}(\alpha, \beta, \gamma)=0, \quad
\frac{\partial \mathcal{A}}{\partial \alpha}(\alpha, \beta, \gamma)\cdot \sin(\beta)=\frac{\partial \mathcal{A}}{\partial \beta}(\alpha, \beta, \gamma)\cdot \sin(\alpha).
\end{equation}

This system of equations can be written as follows:

\begin{eqnarray*}
\sin(\gamma)+\frac{\sin(2\gamma)+\sin(2\alpha+2\gamma)}{\sqrt{f(\alpha)}}+\frac{\sin(2\gamma)+\sin(2\beta+2\gamma)}{\sqrt{f(\beta)}}&=&0,\\
\frac{\sin(2\alpha)+\sin(2\alpha+2\gamma)}{\sqrt{f(\alpha)}\sin(\alpha)}
-\frac{\sin(2\beta)+\sin(2\beta+2\gamma)}{\sqrt{f(\beta)}\sin(\beta)}&=&0.
\end{eqnarray*}
From these equations, we obtain explicit expressions for $\sqrt{f(\alpha)}$ and $\sqrt{f(\beta)}$. Further, since
$f(x)=3+2\cos(2x)+\cos(2\gamma)+\cos(2x+2\gamma)$, then
squaring each of the resulting expressions and equating to the explicit expression through the function $f$, we get two equations
in variables $\alpha, \beta, \gamma$. Then we obtain polynomial equations $U_1(x,y,z)=0$ and $U_2(x,y,z)=0$ using the following change of variables:
$$
\alpha=2\arctan(x), \quad \beta=2\arctan(y),\quad \gamma=\arctan(z).
$$
Since $\alpha,\beta,\gamma \in [\pi/3,\pi/2]$, then $x,y \in [1/\sqrt{3},1]$ and $z\geq \sqrt{3}$.

We do not present here explicit expressions for the polynomials $U_1(x,y,z)$ and $U_2(x,y,z)$ due to their cumbersomeness.
In the polynomial ideal generated by these two polynomials, there is the very useful polynomial
$$
z^{2}(1 + y^{2})^{2}(1 + x^{2})^{2}(x\,y + 1)(x - y)\times U_4(x,y,z),
$$
where
\begin{eqnarray*}
U_4(x,y,z)=
yz^{4} + 48z^{3}x^{5}y^{3} + 9z^{2}y + 72xyz + 9xz^{2} + xz^{4} - 64y^{3}x^{4} - 178z^{2}x^{3}y^{4} \\
- 64x^{3}y^{4}
+ 64x^{3}y^{2} + 64y^{3}x^{2} + 9y^{5}z^{2} + 2y^{3}z^{4} - 14y^{3}z^{2} - 16z^{3}y^{2} + y^{5}z^{4} \\
+ 2x^{3}z^{4}+ x^{5}z^{4}
- 14x^{3}z^{2} + 9x^{5}z^{2} - 16z^{3}x^{2} + 16z^{3}x^{4} + 16z^{3}y^{4} - z^{4}x^{5}y^{6} - z^{4}x^{5}y^{4} \\
- 2z^{4}x^{3}y^{6}
- z^{4}xy^{6} + 62z^{4}x^{3}y^{4} - z^{4}xy^{4} - 62z^{4}x^{3}y^{2} + z^{4}x^{5}y^{2} + z^{4}xy^{2} - z^{4}yx^{4} \\
- z^{4}yx^{6}
+ z^{4}yx^{2}
- z^{4}y^{5}x^{4} - 62z^{4}y^{3}x^{2} + 62
z^{4}y^{3}x^{4} + z^{4}y^{5}x^{2} - z^{4}y^{5}x^{6}
 - 2z^{4}y^{3}x^{6} \\
 - 8z^{3}x^{5}y^{5} + 48z^{3}x^{3}y^{5}
- 8z^{3}xy^{5} + 48z^{3}x^{3}y + 48z^{
3}xy^{3} - 32z^{3}x^{3}y^{3} - 8z^{3}x^{5}y\\
- 8z^{3}xy + 288z^{3}y^{4}x^{4}
+ 16z^{3}y^{6}x^{2} - 16z^{3}y^{6}x^{4}
 + 288z^{3}x^{2}y^{2} - 288z^{3}x^{4}y^{2} \\
 + 16z^{3}x^{6}y^{2} - 288z^{3}x^{2}y^{4}
- 16z^{3}y^{4}x^{6} - 9z^{2}x^{5}y^{6}
 + 151z^{2}x^{5}y^{4} + 14z^{2}x^{3}y^{6} \\
 - 9z^{2}xy^{6} + 151z^{2}xy^{4} + 178z^{2}x^{3}y^{2}
- 151z^{2}x^{5}y^{2} - 151z^{2}xy^{2} +151z^{2}yx^{4} \\
- 9z^{2}yx^{6} - 151z^{2}yx^{2}+ 151z^{2}y^{5}x^{4}
+ 178z^{2}y^{3}x^{2} - 178z^{2}y^{3}x^{4}- 151z^{2}y^{5}x^{2}\\
- 9z^{2}y^{5}x^{6} + 14z^{2}y^{3}x^{6} + 72zx^{5}y^{5}
- 112zx^{3}y^{5} + 72zxy^{5} - 112zx^{3}y - 112zxy^{3} \\
+ 32zx^{3}y^{3} - 112zx^{5}y^{3} + 72zx^{5}y
+ 64zy^{4}x^{4} + 64zx^{2}y^{2} - 64zx^{4}y^{2} - 64zx^{2}y^{4}.
\end{eqnarray*}

We see that there are two possibilities: $y=x$ or $U_4(x,y,z)=0$.
Under the same change of variables the condition $\cos(\alpha)+\cos(\beta)=1/2$ becomes as follows:
$$
U_3(x,y):=3-x^2-y^2-5x^2y^2=0.
$$
\smallskip

{\bf Let us suppose that $y=x$}. In this case the equation $0=U_3(x,y)=3 - 2x^{2} - 5x^{4}$ implies $x=\sqrt{\frac{3}{5}}$.
Further, $U_1(x,x,z)=U_2(x,x,z)=0$ implies
$$
(z_1 - 1)(15 z_1^{3} - 45 z_1^{2} + 5z_1+ 1)=0,
$$
where $z_1=z/\sqrt{15}$.
The latter equation has four real solutions ($-0.1024023606\dots$, $0.2263621549\dots$, $1$, $2.876040206\dots$).
Since $z\geq \sqrt{3}$, then $z_1 \geq 1/\sqrt{5}=0.4472135954\dots$,
hence we have only two critical points.
The corresponding two pentagons have perimeter $3$ and $3.002605955\dots$ respectively (see Fig. \ref{Fig_rest}~b)).

Therefore, we unexpectedly found {\bf a non-degenerate pentagon $P$ with perimeter $3$}. It corresponds to the values
$y=x=\sqrt{\frac{3}{5}}$ and $z=\sqrt{15}$ or, equivalently, to the values $\alpha=\beta=\gamma=\arccos(1/4)$, see Fig. \ref{Fig_arch}~b).
\smallskip

{\bf Now, let us suppose that $U_4(x,y,z)=0$.}
Eliminating the variables $z$ from two polynomials $U_1(x,y,z)$ and $U_4(x,y,z)$, we get the polynomial $U_5(x,y)\cdot U_6(x,y)$,
where the latter one is too large for being explicitly presented here, and
\begin{eqnarray*}
U_5(x,y)=xy(3x^2-1)(3y^2-1)(x^2-3)(y^2-3)(1+x^2)(1+y^2)\\
\times (x+y)(x-y)(xy+1)(xy+y-x+1)(xy-y+x+1).
\end{eqnarray*}

Solving $U_3(x,y)=3-x^2-y^2-5x^2y^2=0$ together with $U_5(x,y)=0$, we get $11$ solutions. Only three of these solutions satisfy $x,y \in[1/\sqrt{3},1]$:
$(x,y)=(1/\sqrt{3},1)$, $(x,y)=(1,1/\sqrt{3})$, and $(x,y)=(\sqrt{3/5},\sqrt{3/5})$.
The latter one we have obtained in the previous stage (the perimeter of the corresponding pentagon is 3).
Other two solutions correspond to $(\alpha,\beta)=(\pi/3,\pi/2)$ and $(\alpha,\beta)=(\pi/2,\pi/3)$, where we have $L(P) \geq 3$.
\smallskip

Further, from the equations $U_3(x,y)=3-x^2-y^2-5x^2y^2=0$ and $U_6(x,y)=0$ (assuming that $y\neq x$)
we may eliminate $x$ and get the equation $U_7(y)=0$, where the polynomial $U_7$ has degree $104$.
Only $6$ of its roots are in the interval $[1/\sqrt{3},1]$. This observation gives the following 6 solutions for $(x,y)$:
\begin{eqnarray*}
(0.6289625043\dots, 0.9351779532\dots), \quad
(0.6517483364\dots, 0.9079467534\dots),\\
(0.8683688602\dots, 0.6861590202\dots), \quad
(0.6861590202\dots, 0.8683688602\dots),\\
(0.9079467534\dots, 0.6517483364\dots), \quad
(0.9351779532\dots, 0.6289625043\dots).\,
\end{eqnarray*}
When we compute the corresponding values of $z$, we see that only in the first and the last cases we get $z\geq \sqrt{3}$.
Hence, we have the following two variants:
\begin{eqnarray*}
(x,y,z)&=&(0.6289625043\dots, 0.9351779532\dots, 2.242807541\dots),\\
(x,y,z)&=&(0.9351779532\dots, 0.6289625043\dots, 2.242807541\dots).
\end{eqnarray*}
The two corresponding pentagons have perimeter $3.008459178\dots$.
Hence, we get the following

\begin{prop}\label{pr.perpen3}
In the interior of the set $\Omega_1$ in Case 2) of Proposition \ref{pr.perpen1},
we have exactly one pentagon, corresponding to $(\alpha,\beta,\gamma)=(\arccos(1/4),\arccos(1/4),\arccos(1/4))$,
with perimeter $3$. All other pentagons in the interior of  $\Omega_1$ have perimeter greater than~$3$.
\end{prop}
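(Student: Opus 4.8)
The plan is to locate every point of absolute minimum of $L(P)=\mathcal{A}(\alpha,\beta,\gamma)$ over the interior of $\Omega_1$ by the Lagrange multiplier method with respect to the constraint $\cos(\alpha)+\cos(\beta)=1/2$, and then to verify that the only such point with value $3$ is $(\arccos(1/4),\arccos(1/4),\arccos(1/4))$. First I would dispose of the values $\{\alpha,\beta\}=\{\pi/3,\pi/2\}$: there the pentagon contains a regular triangle of perimeter $3$, so $L(P)\ge 3$ by Proposition \ref{monotper}. For $\alpha,\beta\in(\pi/3,\pi/2)$, writing the criticality system \eqref{eq.crit1} and expanding it into the two displayed equations preceding the statement, one solves for $\sqrt{f(\alpha)}$ and $\sqrt{f(\beta)}$, squares, and substitutes the closed form $f(x)=3+2\cos(2x)+\cos(2\gamma)+\cos(2x+2\gamma)$; after the rational change of variables $\alpha=2\arctan(x)$, $\beta=2\arctan(y)$, $\gamma=\arctan(z)$ this produces two polynomial equations $U_1(x,y,z)=0$ and $U_2(x,y,z)=0$, while the constraint becomes $U_3(x,y)=3-x^2-y^2-5x^2y^2=0$, and the geometric range becomes $x,y\in[1/\sqrt3,1]$, $z\ge\sqrt3$.

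The core of the argument is elimination. In the ideal $(U_1,U_2)$ I would exhibit the polynomial $z^{2}(1+y^{2})^{2}(1+x^{2})^{2}(xy+1)(x-y)\,U_4(x,y,z)$; since its first four factors cannot vanish in our range, the analysis splits into Case (a) $y=x$ and Case (b) $U_4(x,y,z)=0$. In Case (a), $U_3(x,x)=3-2x^{2}-5x^{4}=0$ forces $x=\sqrt{3/5}$, and then $U_1(x,x,z)=U_2(x,x,z)=0$ reduces, with $z_1=z/\sqrt{15}$, to $(z_1-1)(15z_1^{3}-45z_1^{2}+5z_1+1)=0$; only two of its real roots satisfy $z\ge\sqrt3$, yielding the pentagon $\alpha=\beta=\gamma=\arccos(1/4)$ of perimeter exactly $3$ (Fig. \ref{Fig_arch}~b)) and one further pentagon of perimeter $3.002605\dots>3$. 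In Case (b) I would eliminate $z$ from $U_1$ and $U_4$ to obtain $U_5(x,y)\cdot U_6(x,y)=0$; solving $U_3=U_5=0$ gives $11$ points, of which only $(1/\sqrt3,1)$, $(1,1/\sqrt3)$, $(\sqrt{3/5},\sqrt{3/5})$ lie in $[1/\sqrt3,1]^2$, the first two being the degenerate cases already handled and the third the special pentagon again. Finally, from $U_3=U_6=0$ with $y\ne x$ one eliminates $x$ to reach a univariate $U_7(y)=0$ of degree $104$ having exactly $6$ roots in $[1/\sqrt3,1]$; computing the corresponding $z$, only two of them satisfy $z\ge\sqrt3$, and for both the perimeter equals $3.008459\dots>3$. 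Assembling all cases, every critical point in the interior of $\Omega_1$ other than $(\arccos(1/4),\arccos(1/4),\arccos(1/4))$ has perimeter strictly greater than $3$, that point has perimeter $3$, and since an absolute minimum on the compact parameter set is attained (degeneracies allowed), the proposition follows.

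I expect the main obstacle to be computational rather than conceptual: obtaining the explicit polynomials $U_1,\dots,U_7$ and the correct member $z^{2}(1+y^{2})^{2}(1+x^{2})^{2}(xy+1)(x-y)U_4$ of the ideal requires Gr\"obner basis computations in a computer algebra system, the degree-$104$ elimination polynomial $U_7$ is too large to display, and one must argue rigorously — e.g. via Sturm sequences or certified interval arithmetic on the isolating intervals — both that the root counts in each range are exactly as claimed and that the numbers $3.002605\dots$ and $3.008459\dots$ are genuinely $>3$. Care is also needed to ensure that no critical point is lost when clearing denominators and squaring: spurious extra roots are harmless, but a missing one would not be, which is why the restrictions $x,y\in[1/\sqrt3,1]$, $z\ge\sqrt3$ and the non-vanishing of the discarded factors must be tracked throughout.
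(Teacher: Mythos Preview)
Your proposal is correct and follows essentially the same approach as the paper's own proof: the same Lagrange criticality system, the same rational substitution $\alpha=2\arctan x$, $\beta=2\arctan y$, $\gamma=\arctan z$, the same ideal member factoring off $(x-y)U_4$, the same case split into $y=x$ versus $U_4=0$, and the same subsequent eliminations through $U_5,U_6,U_7$ with identical numerics. Your closing paragraph on certifying the root counts and the strict inequalities (and on tracking non-vanishing of discarded factors after squaring) is a welcome addition that the paper leaves implicit.
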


\begin{remark}
It is also possible to prove Proposition \ref{pr.perpen3}
by explicitly calculating the Gr\"{o}bner basis of the polynomial ideal generated by polynomials $U_1(x,y,z)$, $U_2(x,y,z)$, and $U_3(x,y)$
(using some system of analytical computations).
\end{remark}

\section{The proofs of the main results}\label{sect.3}

In the Introduction we have obtained the proof of Theorem \ref{th.minim} for $n=3$ and the fact that
for any $n$ there are $n$-gons $P$ with the $\Delta$ property and with the perimeter $L(P)$ arbitrarily close to $3$.
Moreover, a regular triangle with unit side is the unique triangle with the $\Delta$ property and with perimeter $3$.

In what follows, we will prove the inequality $L(P)\geq 3$ for any polygon $P$ with the $\Delta$ property.
Moreover, we will prove that for any $n\geq 4$, there is no such $n$-gon with perimeter $3$.
\smallskip

Let us fix $n\geq 3$ and any $n$-gon $A_1A_2A_3\dots A_{n-1}A_n$ with the $\Delta$ property. We consider two possibilities:

Case 1) There is a side of the polygon $P$ whose length is at least $1$;

Case 2) All sides of the polygon $P$ have length $<1$.
\smallskip

It is clear that Case 1) is simple.

\begin{prop}\label{pr.doc1}
If a polygon $P$ has the $\Delta$ property and one of its sides has length $\geq 1$, then $L(P) \geq 3$.
Moreover, $L(P)=3$ if and only if $P$ is a regular triangle with side $1$.
\end{prop}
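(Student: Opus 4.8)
Let $P$ have the $\Delta$ property and suppose one of its sides, say $AB$, has length $|AB|\ge 1$. By the $\Delta$ property there is a point $C\in P$ with $|AC|=|BC|=1$. I would first handle this with the monotonicity of the perimeter (Proposition~\ref{monotper}): since $\triangle ABC\subset P$, we have $L(P)\ge L(\triangle ABC)=|AB|+|AC|+|BC|=|AB|+2\ge 3$. This already gives the inequality in one line. Note $C$ need not be a vertex of $P$, but that is irrelevant: $\triangle ABC$ is a genuine convex figure contained in $P$, so the monotonicity applies verbatim.

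\textbf{Equality analysis.} Suppose now $L(P)=3$. Then equality must hold in both steps above. From $L(P)\ge L(\triangle ABC)$ together with the equality clause of Proposition~\ref{monotper}, we get $P=\triangle ABC$, so $P$ is a triangle. From $|AB|+2=3$ we get $|AB|=1$, hence $\triangle ABC$ has all three sides equal to $1$, i.e.\ $P$ is a regular triangle with unit side. This direction is immediate. Conversely, a regular triangle with unit side visibly has the $\Delta$ property (the apex opposite any side serves as the required point $C$) and has perimeter $3$. So the characterization of the equality case is complete.

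\textbf{Where the work actually is.} Honestly, for this particular proposition there is no serious obstacle: the statement is the ``easy case'' deliberately isolated in the case split of Section~\ref{sect.3} precisely because it reduces instantly to the monotonicity of the perimeter. The only point that requires a moment's care is that $|AB|\ge 1$ is used twice — once to get $|AB|+2\ge 3$, and once (implicitly, via the equality clause) to force $|AB|=1$ in the equality case — and that one must invoke the full strength of Proposition~\ref{monotper}, namely the ``if and only if'' part, to conclude that equality of perimeters forces $P=\triangle ABC$ rather than merely $P\supseteq\triangle ABC$. The genuinely hard cases are Case~2 (all sides shorter than $1$), which is where Theorem~\ref{th.centsym}, Proposition~\ref{pr.syssym}, Proposition~\ref{pr.proreghex}, and the pentagon analysis of Section~\ref{sect.2} get used; the present proposition is a short self-contained step that clears Case~1 out of the way.
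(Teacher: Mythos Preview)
Your proof is correct and follows essentially the same route as the paper's own argument: pick the long side, use the $\Delta$ property to get the isosceles triangle with legs $1$, apply the monotonicity of perimeter (Proposition~\ref{monotper}) for the inequality, and use its equality clause together with $|AB|=1$ for the characterization. The only cosmetic difference is that you spell out the converse direction (that the regular unit triangle satisfies the $\Delta$ property with perimeter $3$), which the paper leaves implicit.
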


\begin{proof}
Without loss of generality we may assume that $|A_1A_2|\geq 1$. By the $\Delta$ property, there is a point $B\in P$ such that
$|A_1B|=|A_2B|=1$. Since the triangle $A_1A_2B$ is a subset of $P$, $|A_1B|=|A_2B|=1$, and $|A_1A_2|\geq 1$, then
$L(P)\geq L(\triangle A_1A_2B)=|A_1B|+|A_2B|+|A_1A_2|\geq 3$ (see Proposition \ref{monotper}).
Note that $L(P)$=3 implies $L(\triangle A_1A_2B)\leq 3$, hence, $|A_1A_2|=1$, $L(\triangle A_1A_2B)=3$,
and $P=\triangle A_1A_2B$ (by the same Proposition \ref{monotper}). Therefore, $P$ is a regular triangle with side $1$.
\end{proof}
\smallskip

Now, we are going to study Case 2).

\begin{prop}\label{pr.doc2}
If a polygon $P$ has the $\Delta$ property and any of its sides has length $<1$, then $L(P)> 3$.
\end{prop}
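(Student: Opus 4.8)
The idea is to pass to the difference body and invoke the structural results already established. Given a polygon $P$ with the $\Delta$ property and all sides shorter than $1$, consider $D(P)=P+(-P)$. By Proposition~\ref{pr.syssym}, $D(P)$ is a centrally symmetric polygon with the $\Delta^s$ property and $L(D(P))=2L(P)$. Moreover, every side of $D(P)$ is parallel and equal in length to a side of $P$ or of $-P$, hence every side of $D(P)$ has length $<1$; in particular no side of $D(P)$ has length exactly $1$. Now Theorem~\ref{th.centsym} applies to $D(P)$ and gives $L(D(P))\geq 6$, so $L(P)\geq 3$. It remains to upgrade this to a strict inequality under the hypothesis that all sides of $P$ are strictly shorter than $1$.

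\textbf{Getting strictness.} Theorem~\ref{th.centsym} says $L(D(P))=6$ forces $D(P)$ to be a regular hexagon with unit side. But a regular hexagon with unit side has all sides of length $1$, while we just observed that every side of $D(P)$ has length $<1$. This is a contradiction, so $L(D(P))>6$, i.e. $L(P)>3$. I would present this as the core of the argument. A subtlety to address: Proposition~\ref{pr.syssym} is proved first under the assumption that $P$ has no two parallel sides, with the general case obtained "by passing to the limit." For the perimeter identity $L(D(P))=2L(P)$ this limiting argument is fine (perimeter is continuous in the Hausdorff metric). If $P$ does have a pair of parallel sides, then $D(P)$ has a side which is the sum of two parallel sides of $P$ (or of $P$ and $-P$), but each such side still has length $<2$, and more carefully one checks that each side of $D(P)$ has length at most the sum of the lengths of the parallel sides of $P$ it comes from — this need not be $<1$. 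So I would instead argue directly: I claim no side of $D(P)$ has length $\geq 1$ forces the regular-hexagon conclusion to fail, but if $P$ has parallel sides a side of $D(P)$ could be long. The clean fix is to note that if $L(P)=3$ then by the already-established $L(P)\geq 3$ from Theorem~\ref{th.minim}'s machinery we'd be at the equality case; instead I will handle parallel sides by a perturbation: approximate $P$ by polygons $P_\varepsilon\to P$ with no parallel sides and all sides still $<1$, apply the no-parallel-sides case to get $L(P_\varepsilon)\geq 3$, pass to the limit for $\geq 3$, and then use Proposition~\ref{pr.centsym} (or Proposition~\ref{pr.proreghex}) to rule out equality.

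\textbf{The equality case, carefully.} Suppose, for contradiction, $L(P)=3$ with all sides of $P$ strictly less than $1$. Then $L(D(P))=6$. Applying Proposition~\ref{pr.proreghex} reasoning backwards is the cleanest route: if $D(P)$ were a regular hexagon with unit side, then by Proposition~\ref{pr.proreghex}, $P$ is obtained from a regular triangle of side $1+a$, $a\in[0,1/2]$, by cutting three congruent corner triangles of side $a$; and for $a\neq 0$ such $P$ does not have the $\Delta$ property, while for $a=0$ it is a regular triangle of unit side — which has a side of length $1$, contradicting our hypothesis. Hence $D(P)$ is not a regular hexagon of unit side, so by the equality clause of Theorem~\ref{th.centsym}, $L(D(P))>6$, i.e. $L(P)>3$. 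To make this rigorous without worrying about parallel sides, I would first establish $L(P)\geq 3$ via the perturbation argument above, then observe that equality would force, in the limit, $D(P)$ to be a regular hexagon with unit side (using Proposition~\ref{pr.centsym} applied to the difference bodies of the perturbing sequence, whose perimeters tend to $6$), and finish via Proposition~\ref{pr.proreghex} as just described.

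\textbf{Main obstacle.} The only real friction is the parallel-sides bookkeeping in Proposition~\ref{pr.syssym}: one must make sure the "side of $D(P)$ has length $<1$" observation survives, or else route around it through a Hausdorff-limit/compactness argument together with Propositions~\ref{pr.centsym} and~\ref{pr.proreghex}. Everything else is a direct chain of citations: $\Delta\Rightarrow\Delta^s$ for $D(P)$, sides of $D(P)$ under $1$, Theorem~\ref{th.centsym} gives $\geq 6$, equality case of Theorem~\ref{th.centsym} plus Proposition~\ref{pr.proreghex} gives strictness.
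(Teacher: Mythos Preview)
Your proposal is correct and follows essentially the same route as the paper: pass to the difference body, use Proposition~\ref{pr.syssym} and Theorem~\ref{th.centsym} to get $L(P)\geq 3$ in the no-parallel-sides case, handle the general case by approximating with such polygons, and rule out equality via Proposition~\ref{pr.centsym} (forcing $D(P)$ to be the regular unit hexagon) together with Proposition~\ref{pr.proreghex}. The one point you leave implicit---that the approximating polygons $P_\varepsilon$ can be chosen to retain the $\Delta$ property as well as having no parallel sides and all sides $<1$---is likewise not spelled out in the paper, so your treatment matches it.
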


\begin{proof}
At first, we consider the case when $P$ has no pair of parallel sides.
Let us consider the difference body $D(P)=P+(-P)$. Recall that $D(P)$ is a centrally symmetric polygon with the center of symmetry
at the origin $O$. By Proposition \ref{pr.syssym}, $D(P)$ has the $\Delta^s$ property, and its perimeter $L\bigl(D(P)\bigr)$ is equal to $2\cdot L(P)$.

Moreover, for any side  of $P$ there are exactly two sides of $D(P)$ that are parallel to the first one and have the same length.
In particular, all sides of $D(P)$ have length $<1$.

By Theorem \ref{th.centsym}, we get that $L(D(P)) >6$. Hence, $L(P)=\frac{1}{2} L\bigl(D(P)\bigr)>3$.
\smallskip

Now let us consider $P$ of the general type (it can have several pairs of mutually parallel sides).
Let us consider the sequence $\{P_n\}_{n\in \mathbb{N}}$ such that

1) every $P_n$ has the $\Delta^s$ property;

2) every sides of any polygon $P_n$ has length $<1$;

3) $P_n \rightarrow P$ in the Hausdorff metric as $n \rightarrow \infty$.

As it is shown earlier, we have the inequalities $L(P_n) >3$. Passing to the limit, we get $L(P) \geq 3$.

Now, it suffices to prove that the equality $L(P)=3$ is impossible. We suppose that $L(P)=3$ and
consider the sequence of the corresponding difference bodies $\{D(P_n)\}_{n \in \mathbb{N}}$.
All sides of $D(P_n)$ are shorter than $1$ for every $n$ (due to the fact that $P_n$ has the same property and has not pairwise parallel sides).
By Proposition \ref{pr.syssym},
$$
L\bigl(D(P_n)\bigr)=2L(P_n)\rightarrow 2 L(P)=L\bigl(D(P)\bigr)=6
$$
as $n \rightarrow \infty$.
By Proposition \ref{pr.centsym}, we get that $D(P)$ is a regular hexagon with side of length $1$.

Finally, since $P$ has the $\Delta$ property, then
$P$ is a regular triangle with unit side length by Proposition \ref{pr.proreghex}, which is impossible.
The proposition is completely proved.
\end{proof}
\smallskip

{\bf The proof of Theorem \ref{th.minim}} follows immediately from Propositions \ref{pr.doc1} and \ref{pr.doc2}.
\medskip

Now we are going to prove our second main result.
\smallskip

\begin{figure}[t]
\begin{minipage}[h]{0.45\textwidth}\hspace{1mm}\\
\center{\includegraphics[width=0.8\textwidth, trim=0mm 0mm 0mm 0mm, clip]{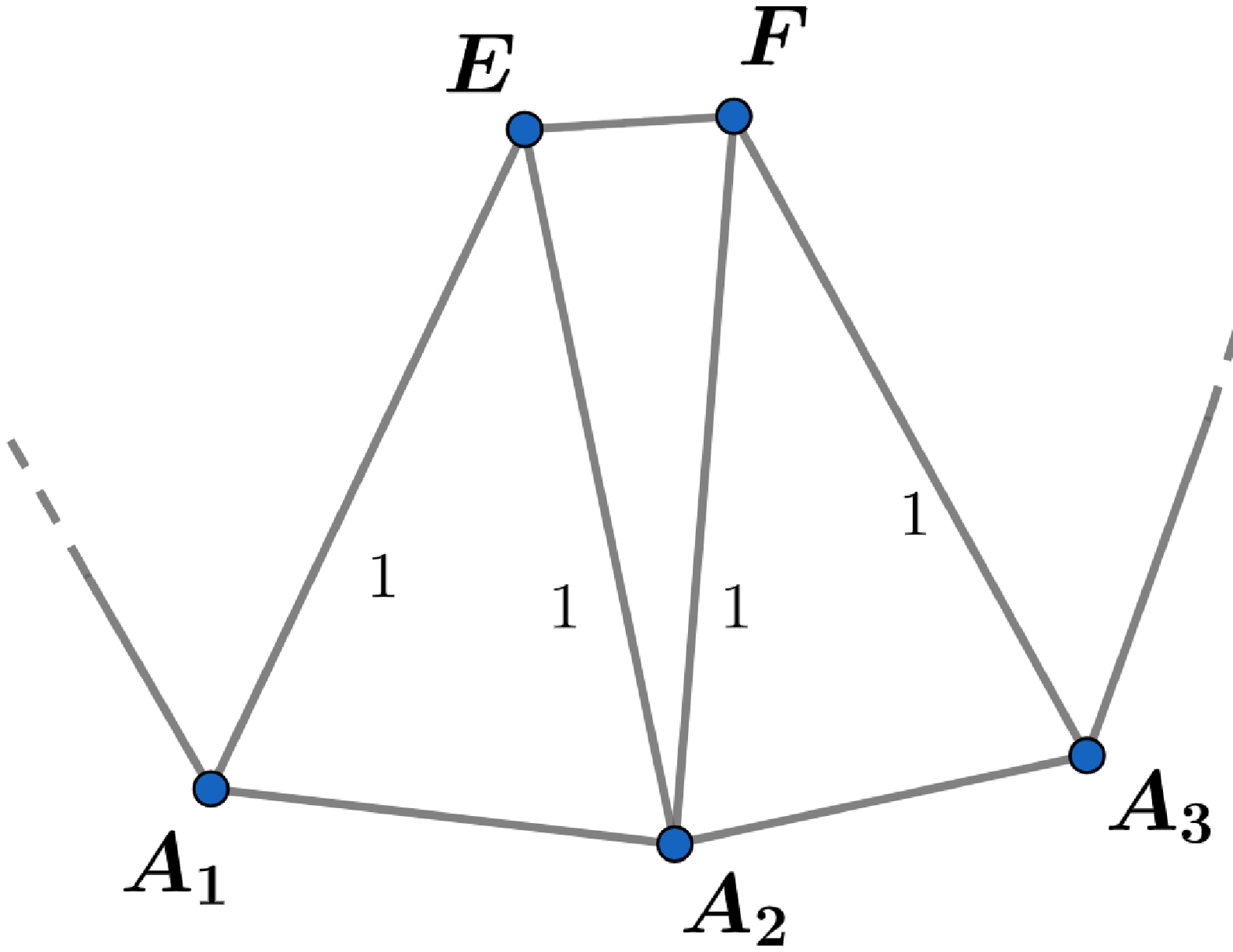} \\ \hspace{1mm}\\ a)}
\end{minipage}
\hspace{1mm}
\begin{minipage}[h]{0.45\textwidth}
\center{\includegraphics[width=0.9\textwidth, trim=0mm 0mm 0mm 0mm, clip]{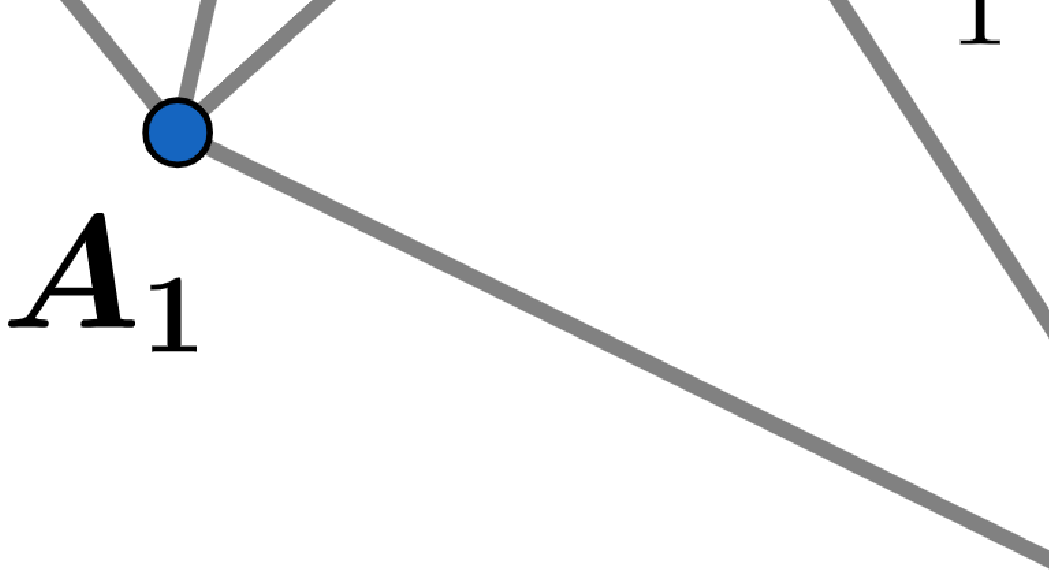} \\ b)}
\end{minipage}
\caption{
a) The first case for the polygon $P_1$;
b) The second case for the polygon $P_1$.
}
\label{Fig_uniq3}
\end{figure}

\begin{proof}[Proof of Theorem \ref{th.minim.2side}]
Since the sides $A_1A_2$ and $A_2A_3$ have struts, there are points $E,F \in P$ such that $|A_1E|=|A_2E|=1$ and $|A_2F|=|A_3F|=1$.
Let $P_1$ be the convex hull of the points $A_1, A_2, A_3, E, F$.
There are two cases: 1) the segments $[A_1,E]$ and $[A_3,F]$ are on the boundary of $P_1$ (see Fig.~\ref{Fig_uniq3}~a))
and 2) the segments $[A_1,E]$ and $[A_3,F]$ have a common point in the interior of $P_1$ (see Fig.~\ref{Fig_uniq3}~b)).
\smallskip

In case first case, we see that
$$
L(P_1)=|A_1E|+|A_3F|+\bigl(|A_1A_2|+|A_2A_3|\bigr)+|EF|\geq 3+|EF|.
$$
By Proposition \ref{monotper}, $L(P)\geq L(P_1)\geq 3$. It is clear that $L(P)=3$ only if
$P=P_1$, $E=F$, $|A_1A_2|+|A_1A_3|=1$. On the other hand, such a quadrangle $P=A_1A_2A_3E$ has not the $\Delta$ property.
For instance, there is no point $B\in P$ such that $|A_1B|=|EB|=1$.
\smallskip

In the second case, we see that $P_1$ satisfies all assumptions of Theorem \ref{th.perpent}.
From this theorem and Proposition \ref{monotper}, we get $L(P)\geq L(P_1)\geq 3$.
On the other hand, $L(P)=3$ implies $L(P_1)=3$ and $P=P_1$.
\end{proof}
\medskip

We hope that the results presented in this paper will serve as a basis for related studies.
It would also be interesting to obtain some analogues of Theorems \ref{th.minim}, \ref{th.minim.2side}, and \ref{th.perpent}
taking, instead of the
perimeter $L(P)$, the area $S(P)$ of the polygon $P$ into consideration.

\vspace{15mm}

\end{document}